\newcounter{phase}[algorithm]
\newlength{\phaserulewidth}
\newcommand{\EE}{\mathbb{E}}
\newcommand{\rT}{{\rm T}}
\newcommand{\cC}{{\cal C}}
\newcommand{\cP}{{\cal P}}
\newcommand{\cT}{{\cal T}}
\newcommand{\Top}{{\rm top}}
\newcommand{\pa}{{\rm pa}}
\newcommand{\an}{{\rm anc}}
\newcommand{\eps}{\varepsilon}
\newcommand{\proofof}[1]{\noindent {\sc Proof of #1.} \quad}
\newtheorem{thm}{Theorem}[section]
\newtheorem{rmk}[thm]{Remark}
\newtheorem{lem}[thm]{Lemma}
\newtheorem{cor}[thm]{Corollary}
\newtheorem{prop}[thm]{Proposition}
\newtheorem{defn}[thm]{Definition}
\newtheorem{exmp}[thm]{Example}
\newcommand{\rf}[1]{(\ref{#1})}
\newcommand{\trf}[1]{Theorem~\ref{#1}}
\newcommand{\erf}[1]{Example~\ref{#1}}
\title{Causal Discovery for Linear Non-Gaussian \\ Models with Disjoint Cycles}
\author[1]{\href{mailto:<mathias.drton@tum.de>}{Mathias Drton}{}}
\author[2]{\href{mailto:<marinagl@kth.se>}{Marina Garrote-L\'opez}}
\author[3]{\href{mailto:<niko.a.nikov@gmail.com>}{Niko Nikov}}
\author[4]{\href{mailto:<erobeva@math.ubc.ca>}{Elina Robeva}}
\author[5]{\href{mailto:<ysw7@cornell.edu>}{Y. Samuel Wang}}
\affil[1]{%
    Technical University of Munich and Munich Center for Machine Learning\\
    Munich, Germany
}
\affil[2]{%
    KTH Royal Institute of Technology\\
    Stockholm, Sweden
}
\affil[3]{%
    University of Edinburgh\\
    Edinburgh, UK
  }
  \affil[4]{%
University of British Columbia\\
Vancouver, BC, Canada
  }
  \affil[5]{%
Cornell University\\
Ithaca, NY, USA
  }
\begin{document}
\maketitle

\begin{abstract}
  The paradigm of linear structural equation modeling readily allows
  one to incorporate causal feedback loops in the model specification.
  These appear as directed cycles in the common graphical
  representation of the models.  However, the presence of cycles
  entails difficulties such as the fact that models need no
  longer be characterized by conditional independence relations.  As
  a result, learning cyclic causal structures  remains a challenging
  problem.  In this paper, we offer new insights on this problem in
  the context of linear non-Gaussian models. First, we precisely
  characterize when two directed graphs determine the same
  linear non-Gaussian model. Next, we take up a setting of cycle-disjoint graphs, for which we are able to show that simple quadratic and cubic polynomial relations among low-order moments of a non-Gaussian distribution allow one to locate source cycles. Complementing this with a strategy of decorrelating cycles and multivariate regression allows one to infer a block-topological order among the directed cycles, which leads to a {consistent and computationally efficient algorithm} for learning causal structures with disjoint cycles.
\end{abstract}

\section{Introduction}
In this work we study linear structural causal models~\citep{pearl:2013} that allow for feedback loops.
Throughout, we consider a collection of observed random variables $(X_i)_{i\in V}$ that is indexed by the nodes of a directed graph $G=(V,E)$. The {\it linear structural equation model} (LSEM) associated to the graph $G$ hypothesizes that the random variables $(X_i)_{i\in V}$ satisfy a system of structural equations
\begin{align}\label{eq:linear_structural_equations}
X_i = \sum_{j\in\pa(i)}\lambda_{ji}X_j+\eps_i,
\end{align}
where $\pa(i)=\{j\in V:j\to i\in E\}$ is the set of parents of node
$i\in V$, and the noise terms $(\eps_i)_{i\in V}$ are centered and
independent.  Our problem of interest is causal discovery from
observational data, i.e., given i.i.d.~samples from this model, we are
interested in learning the graph $G$  and the unknown coefficients
$\lambda_{ij}\in\mathbb{R}$.

Our focus is on discovery in models with non-Gaussian errors.
The non-Gaussianity may render causal directions identifiable also in
settings where classical Markov equivalence theory prevents
identification \citep{maathuis2018handbook}. In the
well understood setting of directed acyclic graphs
(DAGs), each linear non-Gaussian model is associated to a unique DAG
\citep{shimizu2006linear}.  This fact has spurred active development
of 
discovery algorithms
\citep{shimizu2011directlingam}, also dealing with issues of high dimensionality \citep{wang2019highdimensional,DBLP:conf/uai/TramontanoMD22} or latent variables \citep{salehkaleybar:2020,liu:2021,DBLP:conf/nips/AdamsHZ21,cai:2023,wang:2023, schkoda:2024}.  For a comprehensive review beyond these selective references, see \cite{shimizu:2022}.

A problem that remains challenging also in the linear non-Gaussian case, however, is discovery of causal structures that are allowed to contain directed cycles.  Directed cycles encode feedback loops and lead to difficulties such as models that cannot be described in terms of conditional independence and whose linear causal effects do not correspond to coefficients in conditional expectations \citep{spirtes:using_path_diagrams}.  For linear Gaussian models, this results in the fact that conditional independence relations alone do not allow one to learn precisely which variables are adjacent to each other in the graph  \citep{DBLP:conf/uai/Richardson96a,ghassamiYKZ:2020}.  Hence, while algorithms focusing on conditional independence tests \citep{DBLP:conf/uai/Richardson96, Semnani2025} can be useful, they are unable to infer all identifiable aspects of the underlying causal structure.


Work of \cite{DBLP:conf/uai/LacerdaSRH08} stresses the possibility of different graphs with reversed cycles inducing the same linear non-Gaussian models.  In the present  work, we revisit this issue and give a complete characterization of when any two directed graphs are equivalent in the linear non-Gaussian setting (see Section~\ref{sec:equivalence}). 

\cite{DBLP:conf/uai/LacerdaSRH08} and more recently~\citet{dai:2024} show that cyclic models can be learned using independent component analysis (ICA) or Independent Subspace Analysis (ISA) combined with a sorting strategy over all permutations of the considered variables.  Here we consider a setting that lies between the classical DAGs and the fully arbitrary structures considered by \cite{DBLP:conf/uai/LacerdaSRH08}.  
Specifically, we focus on graphs in which cycles remain disjoint {(i.e., each node belongs to at most one cycle)}. While such graphs do appear in applications \citep{grace2016integrative}, our motivation is computational and focused on a discovery algorithm that learns such graphs computationally
efficiently (Section~\ref{sec:algorithm}).  In particular, our proposed
algorithm avoids the search over permutations that features in
ICA-based approaches. 

We remark that the algorithm
of~\citet{DBLP:conf/uai/LacerdaSRH08}  outputs all graphs and edge
weights for which the product of absolute edge weights along any cycle
is smaller than 1. This can also be done in our setting of cycle-disjoint graphs.

A key insight for our algorithm is that the strong components of a
\emph{cycle-disjoint graph} can be learned 
using simple quadratic and cubic constraints on the second
and third moments.  The algorithm 
uses the constraints to recursively identify initial strong components
and leverages further algebraic relations among moments to learn
the  edges inside each component.  Multivariate regression then allows
one to determine the edges between the components.  Our procedure is
reminiscent of that of~\cite{DBLP:journals/corr/abs-1006-5041}, which
learns a DAG over groups of variables but does not offer an efficient way to find the right grouping of the variables.

The rest  of the paper is organized as follows. Section~\ref{sec:preliminaries} reviews preliminaries for linear non-Gaussian SEMs.  Section~\ref{sec:equivalence} provides a complete characterization of the equivalence classes of directed graphs  which give rise to the same linear non-Gaussian model (Thm.~\ref{distrequivalence}). In Section~\ref{sec:algorithm}, we propose our algorithm for causal discovery based on second and third 
 moments of the observed random vector.  Our method works for
 cycle-disjoint directed graphs, i.e., directed graphs in which no two
 directed cycles intersect (Def.~\ref{def:cycledisjoint}). In such graphs, if several vertices form a directed cycle, then there are no additional edges among those vertices. We conclude with Section~\ref{sec:numerical_experiments}, which presents  numerical experiments.

\section{Preliminaries}\label{sec:preliminaries}

Let $G=(V,E)$ be a directed graph (DG) with vertices $V$ and directed edges $E$. Since we allow directed cycles in our graph, it is not always possible to find a topological ordering of the vertices (an ordering so that edges only go from a smaller to a larger numbered vertex). Instead, there exists such an ordering among the strong components of $G$.

\begin{defn}
    A subset of vertices $C\subseteq V$ is a {\em strong component}
    (or a {\em strongly connected component}) of $G$, if there exists a directed path from any vertex $i\in C$ to any other vertex $j\in C$, and $C$ is a maximal such subset.
\end{defn}

Now, let $X = (X_i)_{i\in V}$ be a random vector that satisfies the linear structural equations of~\eqref{eq:linear_structural_equations}. We can rewrite the equations in~\eqref{eq:linear_structural_equations} in matrix-vector format as
\begin{equation}
\label{eq:linear_structural_equations:vector}
 X = \Lambda^\rT X+\eps,   
\end{equation} 
where $\epsilon = (\epsilon_i)_{i\in  V}$ collects the noise terms,
and the
parameter matrix
$\Lambda = (\lambda_{ij})_{i,j\in V}$ has $\lambda_{ij}=0$ unless $i\to j$ is an edge in $E$.
For a well-defined model, the matrix $\Lambda$ should satisfy $I-\Lambda$ being invertible.  Indeed, precisely then, the system
in~\eqref{eq:linear_structural_equations:vector} admits a unique
solution, $X=(I-\Lambda)^{-\rT}\eps$.

\begin{defn}\label{def:Lambda}
 We denote the set of $|V|\times |V|$ matrices $\Lambda=(\lambda_{ij})$ for which $I-\Lambda$ is invertible and $\lambda_{ij} = 0$ if $i\to j\not\in E$ by $\mathbb R^{E}$. Denote the set of $\Lambda$ which in addition satisfy $\lambda_{ij}\neq 0$ if $i\to j \in E$ by $\mathbb R^{E}_*$.
\end{defn}

We assume throughout that the random vector $\varepsilon$, and thus also $X=(I-\Lambda)^{-\rT}\eps$, is centered. The  second moments of $X$ form the covariance matrix
\begin{equation}
    \label{eq:S}
    S=(s_{ij})_{i,j\in V}=(\EE[X_iX_j])_{i,j\in V}.
\end{equation}
Similarly, the third-order moments of $X$ form a tensor of order 3, which we denote by
\begin{equation}
    \label{eq:T}
T=(t_{ijk})_{i,j,k\in V}=(\EE[X_iX_jX_k])_{i,j,k\in V}. 
\end{equation}
Let $\Omega^{(2)}=(\EE[\eps_{i}\eps_{j}])_{i,j\in V}$ and $\Omega^{(3)}=(\EE[\eps_{i}\eps_{j}\eps_{k}])_{i,j,k\in V}$
be the second and third-order moment tensors  of $\varepsilon$. By the independence of the $\varepsilon_i$, both $\Omega^{(2)}$ and $\Omega^{(3)}$ are diagonal tensors whose diagonal entries we denote by $\omega^{(2)}_i$ and $\omega^{(3)}_i$, respectively. Since $X=(I-\Lambda)^{-\rT}\eps$, we have that
\begin{align*}
  S = S(\Lambda,\Omega^{(2)}) &:= (I-\Lambda)^{-\rT}\Omega^{(2)}(I-\Lambda)^{-1}, \\
  T = T(\Lambda,\Omega^{(3)}) & \\
  :=\Omega^{(3)}&\bullet(I-\Lambda)^{-1}\bullet(I-\Lambda)^{-1}\bullet(I-\Lambda)^{-1},
\end{align*}
where $\bullet$ denotes \emph{Tucker product}.  In other words, the entry $t_{ijk}$ of $T$ equals
\begin{align}
    \label{eq:T-tucker}
    &\sum_{a,b,c\in V}[(I- \Lambda)^{-1}]_{ai}[(I- \Lambda)^{-1}]_{bj}
    [(I- \Lambda)^{-1}]_{ck} \Omega^{(3)}_{abc}\\
        \label{eq:T-tucker-par}
&= \sum_{a\in V}[(I- \Lambda)^{-1}]_{ai}[(I- \Lambda)^{-1}]_{aj}
    [(I- \Lambda)^{-1}]_{ak} \Omega^{(3)}_{aaa}.
\end{align}
Going forward, we will study the non-Gaussian model
specified by~\eqref{eq:linear_structural_equations:vector} via the set of second- and third-order moments
\begin{multline*}
  \mathcal{M}^{2,3}(G)=\bigg\{ \big(S(\Lambda,\Omega^{(2)}),T(\Lambda,\Omega^{(3)})\big): \Lambda\in\mathbb{R}^{E}, \\
   \Omega^{(2)}\in \mathbb R^{|V|\times|V|} \text{ diagonal and positive definite},\;\\
   \Omega^{(3)}\in \mathbb R^{|V|\times|V|\times|V|} \text{ diagonal} \bigg\}.
   \end{multline*}
  We call this set the {\em model of second- and third-order moments} corresponding to $G$.

  While the covariance matrix $S$ captures the entire dependence structure in the Gaussian case, higher-order moments are relevant in the linear non-Gaussian setting. Assuming that the diagonal elements of $\Omega^{(3)}$ are nonzero (that is, the distribution of $\varepsilon$ is skewed), the second- and third-order moments are enough to identify the graph $G$ and the coefficient matrix $\Lambda$
  when $G$ is acyclic \citep[e.g.,][]{wang2019highdimensional}. In Section~\ref{sec:algorithm}, we use the second- and third-order moments to learn  graphs we term cycle-disjoint (Def.~\ref{def:cycledisjoint}).

The classical notion of treks describes the source of nonzero
covariances in linear SEMs. In our linear non-Gaussian setting, the more general notion of a $k$-trek is needed.

\begin{defn} \label{multitrek}
    A $k$-trek between the vertices $i_1,\ldots,i_k$ is a $k$-tuple of paths $\tau=(P_1,\ldots,P_k)$ with common source $\ell=\Top(\tau)$ and respective sinks $i_1,\ldots,i_k$.  The node $\ell$ is called the top of the trek. The set of all $k$-treks between $i_1,\ldots,i_k$ is denoted by $\cT(i_1,\ldots,i_k)$.
 {A trek $\tau = (P_1,\ldots,P_k) \in \cT(i_1,\ldots,i_k)$ is called a \emph{simple trek} if $P_u \cap P_v = \text{top}(\tau)$ for all $u,v \in [k]$, that is, the only common vertex between each pair of paths $P_u$ and $P_v$ is the top of the trek. Moreover, we say that $\tau$ has \emph{non-empty sides} if any path $P_u$ in $\tau$ is non-empty, i.e., $\text{top}(\tau) \neq i_u$.}
   
\end{defn}

{The {\em trek rule} uses these treks to specify how the entries of $S$ and $T$ can be expressed in terms of the entries of $\Lambda$, $\Omega^{(2)}$, and  $\Omega^{(3)}$. It was originally used in the Gaussian setting~\citep{sullivant2010trek} and was later extended to higher-order moments for linear non-Gaussian models~\citep{robeva2021multi-trek, amendola2023third}.
It states that the covariances $s_{ij}$ and third moments $t_{ijk}$ can be expressed as
\begin{align}\label{eq:trekrule}
        &s_{ij}=\sum_{(P_i,P_j)\in\cT(i,j)}\lambda^{P_i}\lambda^{P_j}\omega^{(2)}_{\Top(P_i,P_j)}, \\
        &t_{ijk}=\sum_{(P_i,P_j,P_k)\in\cT(i,j,k)}\lambda^{P_i}\lambda^{P_j}\lambda^{P_k}\omega^{(3)}_{\Top(P_i,P_j,P_k)}, \nonumber
    \end{align}
    where for a path $P=(u_1,\ldots,u_r)$ we write $\lambda^P=\lambda_{u_1u_2}\lambda_{u_2u_3}\ldots\lambda_{u_{r-1}u_r}$.}

\section{Distribution Equivalent Graphs}\label{sec:equivalence}

In the case of acyclic graphs, observational data is enough to
uniquely recover the underlying causal
graph~\citep{shimizu2006linear}. However, when we allow graphs
containing cycles, we cannot identify the graph (and thus also
parameters) uniquely.
We now state a characterization of all the graphs and parameters that give rise to the same distributions. {The discussion and results of this section hold for all directed graphs and do not require the disjoint cycles assumption we make later when discussing discovery.}

For any directed graph $G=(V,E)$, let $\mathcal{P}(G)$ be the set of all joint distributions of random vectors $X=(I-\Lambda)^{-1}\eps$ that solve the structural equations from \eqref{eq:linear_structural_equations:vector} for $\Lambda\in\mathbb R^E_*$ and $\eps$ a random noise vector with independent components.

\begin{defn}\label{def:equivalence}
  Let $G = (V, E)$ and $G'  = (V,  E')$ be two graphs on the same set of vertices.  Then $G$ and $G'$ are  {\em distribution equivalent} if $\mathcal{P}(G)=\mathcal{P}(G')$.
\end{defn}

The following~\erf{equivalenceexmp1} shows that we can obtain
distribution equivalent graphs by transforming the structural equations. We then
prove in \trf{distrequivalence} that in fact all distribution equivalences arise
from such transformations. Our consideration of nonzero parameters $\lambda_{ij}$ in
Definition~\ref{def:Lambda} is tied to the fact that to transform the models,  we need to divide by the
$\lambda_{ij}$ coefficients; see also Example~\ref{ex:zero_lambdas} below.

\begin{exmp} \label{equivalenceexmp1}
  Let $G$ and $G'$ be the DGs pictured in Figure~\ref{fig:EquivalentGraphs}. The graph $G$ gives rise to the following structural equations in variables $3,4,5$:
  $$ \begin{cases} X_3 = \lambda_{13}X_1+\lambda_{53}X_5+\eps_3, \\ X_4 = \lambda_{24}X_2+\lambda_{34}X_3+\eps_4, \\ X_5 =\lambda_{45}X_4+\eps_5. \end{cases} $$

\begin{figure} 
    \centering
    
    \tikzset{every picture/.style={line width=0.6pt}} 
    
    \begin{tikzpicture}[x=0.75pt,y=0.75pt,yscale=-0.8,xscale=0.8]
    
    \draw    (125,154.67) -- (98.69,138.07) ;
    \draw [shift={(97,137)}, rotate = 32.25] [color={rgb, 255:red, 0; green, 0; blue, 0 }  ][line width=0.75]    (10.93,-3.29) .. controls (6.95,-1.4) and (3.31,-0.3) .. (0,0) .. controls (3.31,0.3) and (6.95,1.4) .. (10.93,3.29)   ;
    \draw    (142,99.67) -- (171.33,99.67) ;
    \draw [shift={(173.33,99.67)}, rotate = 180] [color={rgb, 255:red, 0; green, 0; blue, 0 }  ][line width=0.75]    (10.93,-3.29) .. controls (6.95,-1.4) and (3.31,-0.3) .. (0,0) .. controls (3.31,0.3) and (6.95,1.4) .. (10.93,3.29)   ;
    \draw    (100,77) -- (125.64,92.94) ;
    \draw [shift={(127.33,94)}, rotate = 211.88] [color={rgb, 255:red, 0; green, 0; blue, 0 }  ][line width=0.75]    (10.93,-3.29) .. controls (6.95,-1.4) and (3.31,-0.3) .. (0,0) .. controls (3.31,0.3) and (6.95,1.4) .. (10.93,3.29)   ;
    \draw    (127.33,105.67) -- (102.39,120.32) ;
    \draw [shift={(100.67,121.33)}, rotate = 329.57] [color={rgb, 255:red, 0; green, 0; blue, 0 }  ][line width=0.75]    (10.93,-3.29) .. controls (6.95,-1.4) and (3.31,-0.3) .. (0,0) .. controls (3.31,0.3) and (6.95,1.4) .. (10.93,3.29)   ;
    \draw    (228,114.67) -- (228,84.67) ;
    \draw [shift={(228,82.67)}, rotate = 90] [color={rgb, 255:red, 0; green, 0; blue, 0 }  ][line width=0.75]    (10.93,-3.29) .. controls (6.95,-1.4) and (3.31,-0.3) .. (0,0) .. controls (3.31,0.3) and (6.95,1.4) .. (10.93,3.29)   ;
    \draw    (192.33,108.33) -- (217.97,124.28) ;
    \draw [shift={(219.67,125.33)}, rotate = 211.88] [color={rgb, 255:red, 0; green, 0; blue, 0 }  ][line width=0.75]    (10.93,-3.29) .. controls (6.95,-1.4) and (3.31,-0.3) .. (0,0) .. controls (3.31,0.3) and (6.95,1.4) .. (10.93,3.29)   ;
    \draw    (219.67,75.33) -- (193.39,90.98) ;
    \draw [shift={(191.67,92)}, rotate = 329.24] [color={rgb, 255:red, 0; green, 0; blue, 0 }  ][line width=0.75]    (10.93,-3.29) .. controls (6.95,-1.4) and (3.31,-0.3) .. (0,0) .. controls (3.31,0.3) and (6.95,1.4) .. (10.93,3.29)   ;
    \draw    (288.67,29.33) .. controls (270.28,49.69) and (267.09,99.48) .. (286.12,127.72) ;
    \draw [shift={(287,129)}, rotate = 234.46] [color={rgb, 255:red, 0; green, 0; blue, 0 }  ][line width=0.75]    (10.93,-3.29) .. controls (6.95,-1.4) and (3.31,-0.3) .. (0,0) .. controls (3.31,0.3) and (6.95,1.4) .. (10.93,3.29)   ;
    \draw    (89.67,118) -- (89.67,86.33) ;
    \draw [shift={(89.67,84.33)}, rotate = 90] [color={rgb, 255:red, 0; green, 0; blue, 0 }  ][line width=0.75]    (10.93,-3.29) .. controls (6.95,-1.4) and (3.31,-0.3) .. (0,0) .. controls (3.31,0.3) and (6.95,1.4) .. (10.93,3.29)   ;
    \draw    (340.33,142) -- (340.96,112.67) ;
    \draw [shift={(341,110.67)}, rotate = 91.22] [color={rgb, 255:red, 0; green, 0; blue, 0 }  ][line width=0.75]    (10.93,-3.29) .. controls (6.95,-1.4) and (3.31,-0.3) .. (0,0) .. controls (3.31,0.3) and (6.95,1.4) .. (10.93,3.29)   ;
    \draw    (346.67,99.67) -- (376,99.67) ;
    \draw [shift={(378,99.67)}, rotate = 180] [color={rgb, 255:red, 0; green, 0; blue, 0 }  ][line width=0.75]    (10.93,-3.29) .. controls (6.95,-1.4) and (3.31,-0.3) .. (0,0) .. controls (3.31,0.3) and (6.95,1.4) .. (10.93,3.29)   ;
    \draw    (306.36,78.06) -- (332,94) ;
    \draw [shift={(304.67,77)}, rotate = 31.88] [color={rgb, 255:red, 0; green, 0; blue, 0 }  ][line width=0.75]    (10.93,-3.29) .. controls (6.95,-1.4) and (3.31,-0.3) .. (0,0) .. controls (3.31,0.3) and (6.95,1.4) .. (10.93,3.29)   ;
    \draw    (330.28,106.68) -- (305.33,121.33) ;
    \draw [shift={(332,105.67)}, rotate = 149.57] [color={rgb, 255:red, 0; green, 0; blue, 0 }  ][line width=0.75]    (10.93,-3.29) .. controls (6.95,-1.4) and (3.31,-0.3) .. (0,0) .. controls (3.31,0.3) and (6.95,1.4) .. (10.93,3.29)   ;
    \draw    (294.33,116) -- (294.33,84.33) ;
    \draw [shift={(294.33,118)}, rotate = 270] [color={rgb, 255:red, 0; green, 0; blue, 0 }  ][line width=0.75]    (10.93,-3.29) .. controls (6.95,-1.4) and (3.31,-0.3) .. (0,0) .. controls (3.31,0.3) and (6.95,1.4) .. (10.93,3.29)   ;
    \draw    (89.67,39.83) -- (89.06,60.83) ;
    \draw [shift={(89,62.83)}, rotate = 271.66] [color={rgb, 255:red, 0; green, 0; blue, 0 }  ][line width=0.75]    (10.93,-3.29) .. controls (6.95,-1.4) and (3.31,-0.3) .. (0,0) .. controls (3.31,0.3) and (6.95,1.4) .. (10.93,3.29)   ;
    \draw    (433,114.67) -- (433,84.67) ;
    \draw [shift={(433,82.67)}, rotate = 90] [color={rgb, 255:red, 0; green, 0; blue, 0 }  ][line width=0.75]    (10.93,-3.29) .. controls (6.95,-1.4) and (3.31,-0.3) .. (0,0) .. controls (3.31,0.3) and (6.95,1.4) .. (10.93,3.29)   ;
    \draw    (397.33,108.33) -- (422.97,124.28) ;
    \draw [shift={(424.67,125.33)}, rotate = 211.88] [color={rgb, 255:red, 0; green, 0; blue, 0 }  ][line width=0.75]    (10.93,-3.29) .. controls (6.95,-1.4) and (3.31,-0.3) .. (0,0) .. controls (3.31,0.3) and (6.95,1.4) .. (10.93,3.29)   ;
    \draw    (424.67,75.33) -- (398.39,90.98) ;
    \draw [shift={(396.67,92)}, rotate = 329.24] [color={rgb, 255:red, 0; green, 0; blue, 0 }  ][line width=0.75]    (10.93,-3.29) .. controls (6.95,-1.4) and (3.31,-0.3) .. (0,0) .. controls (3.31,0.3) and (6.95,1.4) .. (10.93,3.29)   ;
    
    \draw (130.67,149.23) node [anchor=north west][inner sep=0.75pt]  [font=\footnotesize]  {$1$};
    \draw (85.5,123.23) node [anchor=north west][inner sep=0.75pt]  [font=\footnotesize]  {$3$};
    \draw (85.5,64.57) node [anchor=north west][inner sep=0.75pt]  [font=\footnotesize]  {$4$};
    \draw (223.5,64.57) node [anchor=north west][inner sep=0.75pt]  [font=\footnotesize]  {$8$};
    \draw (131.67,93.07) node [anchor=north west][inner sep=0.75pt]  [font=\footnotesize]  {$5$};
    \draw (178,92.57) node [anchor=north west][inner sep=0.75pt]  [font=\footnotesize]  {$6$};
    \draw (223.5,123.23) node [anchor=north west][inner sep=0.75pt]  [font=\footnotesize]  {$7$};
    \draw (335.33,147.9) node [anchor=north west][inner sep=0.75pt]  [font=\footnotesize]  {$1$};
    \draw (292.67,21.57) node [anchor=north west][inner sep=0.75pt]  [font=\footnotesize]  {$2$};
    \draw (290.17,123.23) node [anchor=north west][inner sep=0.75pt]  [font=\footnotesize]  {$3$};
    \draw (290.17,64.57) node [anchor=north west][inner sep=0.75pt]  [font=\footnotesize]  {$4$};
    \draw (336.33,93.07) node [anchor=north west][inner sep=0.75pt]  [font=\footnotesize]  {$5$};
    \draw (382.67,92.57) node [anchor=north west][inner sep=0.75pt]  [font=\footnotesize]  {$6$};
    \draw (85.33,22.73) node [anchor=north west][inner sep=0.75pt]  [font=\footnotesize]  {$2$};
    \draw (148.67,19.57) node [anchor=north west][inner sep=0.75pt]  [font=\small]  {$G$};
    \draw (354.67,19.57) node [anchor=north west][inner sep=0.75pt]  [font=\small]  {$G'$};
    \draw (428.5,64.57) node [anchor=north west][inner sep=0.75pt]  [font=\footnotesize]  {$8$};
    \draw (428.5,123.23) node [anchor=north west][inner sep=0.75pt]  [font=\footnotesize]  {$7$};

    \end{tikzpicture}

     \caption{Distribution equivalent graphs $G$ and $G'$.}
     \label{fig:EquivalentGraphs}
\end{figure}
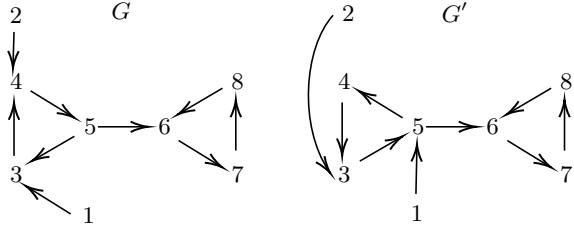

  Assuming all of the $\lambda$s are nonzero, we can rearrange these equations to solve for the variables on the right-hand side of each equation and obtain
  $$ \begin{cases} 
  X_3 = \underbrace{(-\lambda_{24}/\lambda_{34})}_{\lambda_{23}'}X_2+\underbrace{(1/\lambda_{34})}_{\lambda_{43}'}X_4+\underbrace{(-1/\lambda_{34})\eps_4}_{\eps_3'},\\ 
  X_4 = \underbrace{(1/\lambda_{45})}_{\lambda_{54}'}X_5+\underbrace{(-1/\lambda_{45})\eps_5}_{\eps_4'},\\
  X_5 = \underbrace{(-\lambda_{13}/\lambda_{53})}_{\lambda_{15}'}X_1+\underbrace{(1/\lambda_{53})}_{\lambda_{35}'}X_3+\underbrace{(-1/\lambda_{53})\eps_3}_{\eps_5'}
  \end{cases} $$
  Note that this transformation does not affect the variables $1,2,6,7,8$. In particular, $X$ also lies in the model generated by $G'$. Therefore, $G$ and $G'$ are distribution equivalent.
\end{exmp}

\begin{defn}
  Let $G=(V,E)$ be a DG, and let $\pi$ be a permutation of $V$. We say that $\pi$ {\em factors according to vertex-disjoint cycles} in $G$ if for all $\tau=(i_1\ i_2\ \cdots\ i_s)$ in the disjoint cycle decomposition of $\pi$, the directed cycle $i_1\to i_2\to\cdots\to i_s\to i_1$ is in $G$. Such a permutation $\pi$ determines a unique DG $\pi(G)=(V,\pi(E))$ via
  \begin{enumerate}[label=(\roman*)]
  \item if $i=\pi(j)$ {and $i\neq j$}, then $i\to j\in \pi(E)$, and
  \item if $i\ne \pi(j)$ {and $i\neq j$}, then $i\to j\in \pi(E)$ if and only if $i\to \pi(j)\in E$.
  \end{enumerate}
  Condition (ii) above is equivalent to ``if $i\ne \pi(j)$, then $i\to \pi^{-1}(j)\in\pi(E)$ if and only if $i\to j\in E$.''
\end{defn}

Put differently, in order to construct the graph $\pi(G)$, we need to reverse one or several disjoint cycles in $G$, and if the edge $i\to j$ was on one of these cycles (and thus got reversed to $j\to i$), all edges $k\to j$ that originally pointed to $j$ have to now be changed to point to $i$, i.e., $k\to i$.

\noindent Note that in \erf{equivalenceexmp1} we had $G'=\pi(G)$ with $\pi=(3\ 4\ 5)$. In other words, we reversed the cycle $3\to4\to5\to3$, and we had to adjust the edges pointing to $3,4,5$ according to the rule above. Although in this example all cycles in $G$ are vertex-disjoint, this need not be the case in general. Here we provide another example.

\begin{exmp} \label{equivalenceexmp2}
  Figure~\ref{fig:non-cycle-disjoint}  pictures  $G$ and $\pi(G)$ for two different choices of $\pi$. In fact,  these three graphs comprise an entire distribution equivalence class.

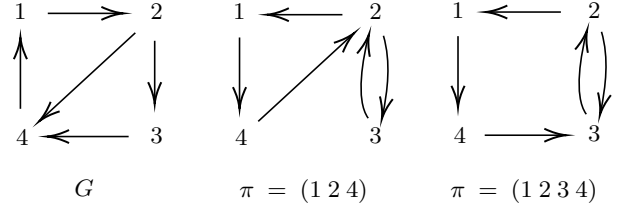
\begin{figure} 
    \centering

    \tikzset{every picture/.style={line width=0.6pt}} 
    
    \begin{tikzpicture}[x=0.75pt,y=0.75pt,yscale=-1,xscale=1]

    \draw    (108.67,71.33) -- (108.67,101.17) ;
    \draw [shift={(108.67,103.17)}, rotate = 270] [color={rgb, 255:red, 0; green, 0; blue, 0 }  ][line width=0.75]    (10.93,-3.29) .. controls (6.95,-1.4) and (3.31,-0.3) .. (0,0) .. controls (3.31,0.3) and (6.95,1.4) .. (10.93,3.29)   ;
    \draw    (55.33,57.5) -- (91,57.5) ;
    \draw [shift={(93,57.5)}, rotate = 180] [color={rgb, 255:red, 0; green, 0; blue, 0 }  ][line width=0.75]    (10.93,-3.29) .. controls (6.95,-1.4) and (3.31,-0.3) .. (0,0) .. controls (3.31,0.3) and (6.95,1.4) .. (10.93,3.29)   ;
    \draw    (99,67.33) -- (52.46,110.64) ;
    \draw [shift={(51,112)}, rotate = 317.06] [color={rgb, 255:red, 0; green, 0; blue, 0 }  ][line width=0.75]    (10.93,-3.29) .. controls (6.95,-1.4) and (3.31,-0.3) .. (0,0) .. controls (3.31,0.3) and (6.95,1.4) .. (10.93,3.29)   ;
    \draw    (41.67,105.67) -- (41.67,71.67) ;
    \draw [shift={(41.67,69.67)}, rotate = 90] [color={rgb, 255:red, 0; green, 0; blue, 0 }  ][line width=0.75]    (10.93,-3.29) .. controls (6.95,-1.4) and (3.31,-0.3) .. (0,0) .. controls (3.31,0.3) and (6.95,1.4) .. (10.93,3.29)   ;
    \draw    (95.67,119.67) -- (57,119.67) ;
    \draw [shift={(55,119.67)}, rotate = 360] [color={rgb, 255:red, 0; green, 0; blue, 0 }  ][line width=0.75]    (10.93,-3.29) .. controls (6.95,-1.4) and (3.31,-0.3) .. (0,0) .. controls (3.31,0.3) and (6.95,1.4) .. (10.93,3.29)   ;
    \draw    (166.67,58.17) -- (202.33,58.17) ;
    \draw [shift={(164.67,58.17)}, rotate = 0] [color={rgb, 255:red, 0; green, 0; blue, 0 }  ][line width=0.75]    (10.93,-3.29) .. controls (6.95,-1.4) and (3.31,-0.3) .. (0,0) .. controls (3.31,0.3) and (6.95,1.4) .. (10.93,3.29)   ;
    \draw    (206.87,68.7) -- (160.33,112) ;
    \draw [shift={(208.33,67.33)}, rotate = 137.06] [color={rgb, 255:red, 0; green, 0; blue, 0 }  ][line width=0.75]    (10.93,-3.29) .. controls (6.95,-1.4) and (3.31,-0.3) .. (0,0) .. controls (3.31,0.3) and (6.95,1.4) .. (10.93,3.29)   ;
    \draw    (151,103.67) -- (151,69.67) ;
    \draw [shift={(151,105.67)}, rotate = 270] [color={rgb, 255:red, 0; green, 0; blue, 0 }  ][line width=0.75]    (10.93,-3.29) .. controls (6.95,-1.4) and (3.31,-0.3) .. (0,0) .. controls (3.31,0.3) and (6.95,1.4) .. (10.93,3.29)   ;
    \draw    (275.67,56.83) -- (311.33,56.83) ;
    \draw [shift={(273.67,56.83)}, rotate = 0] [color={rgb, 255:red, 0; green, 0; blue, 0 }  ][line width=0.75]    (10.93,-3.29) .. controls (6.95,-1.4) and (3.31,-0.3) .. (0,0) .. controls (3.31,0.3) and (6.95,1.4) .. (10.93,3.29)   ;
    \draw    (260,103) -- (260,69) ;
    \draw [shift={(260,105)}, rotate = 270] [color={rgb, 255:red, 0; green, 0; blue, 0 }  ][line width=0.75]    (10.93,-3.29) .. controls (6.95,-1.4) and (3.31,-0.3) .. (0,0) .. controls (3.31,0.3) and (6.95,1.4) .. (10.93,3.29)   ;
    \draw    (312,119) -- (273.33,119) ;
    \draw [shift={(314,119)}, rotate = 180] [color={rgb, 255:red, 0; green, 0; blue, 0 }  ][line width=0.75]    (10.93,-3.29) .. controls (6.95,-1.4) and (3.31,-0.3) .. (0,0) .. controls (3.31,0.3) and (6.95,1.4) .. (10.93,3.29)   ;
    \draw    (215.2,110.4) .. controls (208.5,101.1) and (212.88,79) .. (215.13,69.84) ;
    \draw [shift={(215.6,68)}, rotate = 104.74] [color={rgb, 255:red, 0; green, 0; blue, 0 }  ][line width=0.75]    (10.93,-3.29) .. controls (6.95,-1.4) and (3.31,-0.3) .. (0,0) .. controls (3.31,0.3) and (6.95,1.4) .. (10.93,3.29)   ;
    \draw    (222.8,68.8) .. controls (226.62,82.17) and (224.92,96.41) .. (222.37,109.03) ;
    \draw [shift={(222,110.8)}, rotate = 281.9] [color={rgb, 255:red, 0; green, 0; blue, 0 }  ][line width=0.75]    (10.93,-3.29) .. controls (6.95,-1.4) and (3.31,-0.3) .. (0,0) .. controls (3.31,0.3) and (6.95,1.4) .. (10.93,3.29)   ;
    \draw    (324,108.4) .. controls (317.3,99.1) and (321.68,77) .. (323.93,67.84) ;
    \draw [shift={(324.4,66)}, rotate = 104.74] [color={rgb, 255:red, 0; green, 0; blue, 0 }  ][line width=0.75]    (10.93,-3.29) .. controls (6.95,-1.4) and (3.31,-0.3) .. (0,0) .. controls (3.31,0.3) and (6.95,1.4) .. (10.93,3.29)   ;
    \draw    (331.6,66.8) .. controls (335.42,80.17) and (333.72,94.41) .. (331.17,107.03) ;
    \draw [shift={(330.8,108.8)}, rotate = 281.9] [color={rgb, 255:red, 0; green, 0; blue, 0 }  ][line width=0.75]    (10.93,-3.29) .. controls (6.95,-1.4) and (3.31,-0.3) .. (0,0) .. controls (3.31,0.3) and (6.95,1.4) .. (10.93,3.29)   ;
    
    \draw (37.17,51.4) node [anchor=north west][inner sep=0.75pt]  [font=\footnotesize]  {$1$};
    \draw (105.17,113.57) node [anchor=north west][inner sep=0.75pt]  [font=\footnotesize]  {$3$};
    \draw (38.17,114.57) node [anchor=north west][inner sep=0.75pt]  [font=\footnotesize]  {$4$};
    \draw (105.17,51.4) node [anchor=north west][inner sep=0.75pt]  [font=\footnotesize]  {$2$};
    \draw (67.27,139.97) node [anchor=north west][inner sep=0.75pt]  [font=\footnotesize]  {$G$};
    \draw (146.5,51.4) node [anchor=north west][inner sep=0.75pt]  [font=\footnotesize]  {$1$};
    \draw (214.5,113.57) node [anchor=north west][inner sep=0.75pt]  [font=\footnotesize]  {$3$};
    \draw (147.5,114.57) node [anchor=north west][inner sep=0.75pt]  [font=\footnotesize]  {$4$};
    \draw (214.5,51.4) node [anchor=north west][inner sep=0.75pt]  [font=\footnotesize]  {$2$};
    \draw (255.5,50.73) node [anchor=north west][inner sep=0.75pt]  [font=\footnotesize]  {$1$};
    \draw (323.5,112.9) node [anchor=north west][inner sep=0.75pt]  [font=\footnotesize]  {$3$};
    \draw (256.5,113.9) node [anchor=north west][inner sep=0.75pt]  [font=\footnotesize]  {$4$};
    \draw (323.5,50.73) node [anchor=north west][inner sep=0.75pt]  [font=\footnotesize]  {$2$};
    \draw (149.6,139.97) node [anchor=north west][inner sep=0.75pt]  [font=\footnotesize]  {$\pi \ =\ ( 1\ 2\ 4)$};
    \draw (254.93,139.97) node [anchor=north west][inner sep=0.75pt]  [font=\footnotesize]  {$\pi \ =\ ( 1\ 2\ 3\ 4)$};

    \end{tikzpicture}

    \caption{Permutations that factor according to vertex-disjoint cycles in $G$.}
\label{fig:non-cycle-disjoint}
\end{figure}

\end{exmp}

We  remark that if $\pi$ factors according to vertex-disjoint cycles in $G$, then the graphs $G$ and $\pi(G)$ have the same sets of $p$-adjacencies as defined in~\cite{DBLP:conf/uai/Richardson96}.

\noindent The theorem below characterizes all graphs distribution equivalent to a given directed graph $G$.  Here, we write $=_d$ for equality in distribution.

\begin{thm} \label{distrequivalence}
Two directed graphs $G = (V, E)$ and $G' = (V, E')$ are distribution equivalent if and only if $G'=\pi(G)$ for a permutation $\pi$ on $V$ that factors according to vertex-disjoint cycles in $G$. Moreover, given $\Lambda\in\mathbb R^E_*$ and a noise vector $\varepsilon$ with independent components for the model corresponding to $G$, the permutation $\pi$ uniquely determines a coefficient matrix $\Lambda'\in\mathbb R^{E'}_*$ and a noise vector $\eps'$ with independent components for the model given by $G'$ such that $X':=(I-\Lambda')^\rT\eps'=_d(I-\Lambda)^{-\rT}\eps=:X$.
\end{thm}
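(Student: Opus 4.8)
The plan is to pass from graphs and coefficient matrices to the associated \emph{mixing matrices} and exploit the identifiability theory of independent component analysis (ICA). Writing $B := I-\Lambda^{\rT}$, the structural equations become $BX=\eps$, so that $X=B^{-1}\eps$; here row $i$ of $B$ encodes the equation for $X_i$, with $B_{ii}=1$ and $B_{ij}=-\lambda_{ji}\neq 0$ exactly when $j\to i\in E$. A representation of a distribution by $G$ is thus a factorization $X=B^{-1}\eps$ with $B=I-\Lambda^{\rT}$, $\Lambda\in\RR^{E}_{*}$, and $\eps$ having independent components, and likewise $B'=I-\Lambda'^{\rT}$ for $G'$. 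The whole statement reduces to understanding when two such factorizations of the same $X$ can coexist, which I will control through the generalized-permutation structure forced by ICA.

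\textbf{The ``if'' direction and the ``moreover'' claim.} Starting from a permutation $\pi$ factoring according to vertex-disjoint cycles in $G$, I would guess the matrix identity $B'=D P_{\pi}B$, where $P_{\pi}$ is the permutation matrix with $(P_{\pi})_{i,\pi(i)}=1$ and $D=\mathrm{diag}(d_i)$ is diagonal. Row $i$ of $B'$ is then the scalar multiple $d_i$ of row $\pi(i)$ of $B$, so there is never any cancellation and the support is merely relabeled. Requiring $B'_{ii}=d_i B_{\pi(i),i}$ to equal $1$ forces $d_i=1$ when $\pi(i)=i$ and $d_i=-1/\lambda_{i,\pi(i)}$ otherwise; the latter is well defined precisely because $\pi$ factoring according to cycles guarantees the cycle edge $i\to\pi(i)\in E$, so $\lambda_{i,\pi(i)}\neq 0$. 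A short check using conditions (i) and (ii) defining $\pi(E)$ then shows the off-diagonal support of row $i$ of $B'$ is exactly the set of edges into $i$ in $\pi(E)$: the surviving diagonal entry of row $\pi(i)$ yields the reversed edge $\pi(i)\to i$ (condition (i)), while a parent $j\neq i$ of $\pi(i)$ in $G$ yields $j\to i$ (condition (ii)). Hence $B'=I-\Lambda'^{\rT}$ with $\Lambda'\in\RR^{E'}_{*}$ and $E'=\pi(E)$, and since $B'=D P_{\pi}B$ is a product of invertible matrices, $I-\Lambda'$ is invertible. Setting $\eps':=D P_{\pi}\eps$ gives a vector with independent components (a scaling and relabeling of the independent $\eps_i$) satisfying $B'X=\eps'$, so $X$ solves the structural equations of $G'$ with the stated, uniquely determined $\Lambda'$ and $\eps'$; this is exactly the transformation illustrated in \erf{equivalenceexmp1}. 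Applying the construction to $\pi^{-1}$, which factors according to cycles in $G'=\pi(G)$ and satisfies $\pi^{-1}(G')=G$, yields the reverse inclusion and hence $\mathcal{P}(G)=\mathcal{P}(G')$.

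\textbf{The ``only if'' direction.} Here I would assume $\mathcal{P}(G)=\mathcal{P}(G')$ and select a single convenient distribution in this common set: take $\Lambda\in\RR^{E}_{*}$ together with a noise vector $\eps$ all of whose components are non-Gaussian. Since the distribution also lies in $\mathcal{P}(G')$, it factors as $X=B'^{-1}\eps'$ with $B'=I-\Lambda'^{\rT}$, $\Lambda'\in\RR^{E'}_{*}$, and $\eps'$ independent. The ICA identifiability theorem (the Darmois--Skitovich/Comon result: two ICA factorizations of the same vector with at most one Gaussian source each agree up to scaling and permutation of the sources) then yields a generalized permutation matrix $M=D P_{\pi}$ with $B'=M B$. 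This is the same algebraic shape as above, so I can reuse the bookkeeping: unit diagonal of $B'$ forces, for every $i$ with $\pi(i)\neq i$, that $B_{\pi(i),i}\neq 0$, i.e.\ $i\to\pi(i)\in E$; running this over a cycle $(i_1\ \cdots\ i_s)$ of $\pi$ shows the directed cycle $i_1\to\cdots\to i_s\to i_1$ lies in $G$, which is exactly the statement that $\pi$ factors according to vertex-disjoint cycles in $G$. The support computation then identifies the off-diagonal support of $B'=MB$ with $\pi(E)$, and since this support equals $E'$, we conclude $G'=\pi(G)$.

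\textbf{Main obstacle.} The delicate point is the ``only if'' direction, specifically the clean invocation of ICA identifiability: one must check that the chosen representative genuinely has non-Gaussian independent sources so that the generalized-permutation conclusion (rather than a larger orthogonal ambiguity) is available, and that the $G'$-representation guaranteed by $\mathcal{P}(G)=\mathcal{P}(G')$ inherits the same hypotheses. Everything downstream is the combinatorial translation of ``$B'$ is a generalized row-permutation of $B$ with unit diagonal'' into ``$\pi$ reverses disjoint cycles of $G$ and relabels incoming edges''; the key point making this exact is that no cancellation occurs, since each row of $B'$ is a scalar multiple of a single row of $B$, so supports are preserved. The remaining care is purely notational, in keeping the transpose conventions of $B=I-\Lambda^{\rT}$ consistent throughout.
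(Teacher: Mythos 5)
Your proposal is correct and follows essentially the same route as the paper's proof: both directions hinge on the Darmois--Skitovich theorem forcing $(I-\Lambda')^\rT(I-\Lambda)^{-\rT}$ to be a generalized permutation matrix, followed by the same diagonal-normalization and support bookkeeping that identifies $\pi$ as factoring according to vertex-disjoint cycles and yields $E'=\pi(E)$. The only difference is cosmetic: you phrase the identity as $B'=DP_\pi B$ acting on rows of $B=I-\Lambda^\rT$, whereas the paper writes the transposed version $(I-\Lambda)PD=I-\Lambda'$ acting on columns, and you run the constructive ("if") direction first while the paper derives the formulas in the necessity step and reuses them.
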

\begin{proof} We first prove the necessity of our condition. A candidate $(\Lambda',\eps')$ must satisfy $\eps'=_d(I-\Lambda')^\rT(I-\Lambda)^{-\rT}\eps$. Since $\eps'$ must have independent components and the entries of $\varepsilon$ are non-Gaussian, the Darmois-Skitovich Theorem~\citep{darmois1953,skitovich1953} implies that every row of $(I-\Lambda')^\rT(I-\Lambda)^{-\rT}$ has precisely one nonzero entry. In other words, there exists a permutation matrix $P$ and a dilation matrix $D$ so that
\begin{equation} \label{perm}
  (I-\Lambda)PD=I-\Lambda'.
\end{equation} 
We can write $I-\Lambda=(-\lambda_{ij})_{i,j\in V}$ with $\lambda_{ii}=-1$ for $i=1,\ldots,n$. Suppose that right multiplication by $P$ permutes the columns of $I-\Lambda$ via the permutation $\pi$, so that the diagonal entries of $(I-\Lambda)P$ are $-\lambda_{i\pi(i)}$. But $I-\Lambda'$ has diagonal entries equal to $1$, so we must have $\lambda_{i\pi(i)}\ne 0$. This implies that $\pi(i)=i$ or $i\to\pi(i)\in E$. In particular, $\pi$ factors according to vertex-disjoint cycles in $G$. Moreover, since $I-\Lambda'$ has diagonal entries equal to $1$, by \rf{perm}, the $i$-th diagonal entry of $D$ must equal $-1/\lambda_{i\pi(i)}$. Hence, if we write $I-\Lambda'=(-\lambda'_{ij})_{i,j\in V}$ with $\lambda'_{ii}=-1$ for $i=1,\ldots,n$ as before, then
$$ -\lambda'_{ij}=\begin{cases} \lambda_{i\pi(j)}/\lambda_{j\pi(j)}, & \text{if}\ i\ne \pi(j), \\ -1/\lambda_{j\pi(j)}, & \text{if}\ i=\pi(j).\end{cases} $$
Since $\lambda_{ij}\ne 0$ if and only if $i\to j\in E$, we have that $G'=\pi(G)$ as required. 

To prove the sufficiency of our condition, we can simply calculate that $(I-\Lambda)^{-\rT}\eps=_d(I-\Lambda')^{-\rT}\eps'$, where $\Lambda'$ is determined as above, and $\eps'=_d(I-\Lambda')^\rT(I-\Lambda)^{-\rT}\eps=DP^\rT\eps$.
\hfill$\square$ 
\end{proof}

In our Definition~\ref{def:equivalence} of equivalence, we can rearrange the  structural equations,  and thus transform the graph only if the $\Lambda$ coefficients we divide by in our procedure are nonzero, which is satisfied whenever $\Lambda\in\mathbb R^E_{*}$. However, if some of the $\lambda_{ij}$ are zero for an edge $i\to j\in E$, then the random variable does not necessarily lie in the transformed model. 

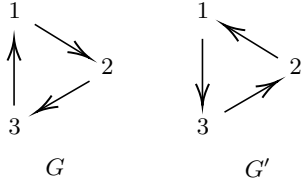
\begin{figure} 
    \centering
    \tikzset{every picture/.style={line width=0.6pt}} 
    
    \begin{tikzpicture}[x=0.75pt,y=0.75pt,yscale=-1,xscale=1]
    
    \draw    (100,77) -- (125.64,92.94) ;
    \draw [shift={(127.33,94)}, rotate = 211.88] [color={rgb, 255:red, 0; green, 0; blue, 0 }  ][line width=0.75]    (10.93,-3.29) .. controls (6.95,-1.4) and (3.31,-0.3) .. (0,0) .. controls (3.31,0.3) and (6.95,1.4) .. (10.93,3.29)   ;
    \draw    (127.33,105.67) -- (102.39,120.32) ;
    \draw [shift={(100.67,121.33)}, rotate = 329.57] [color={rgb, 255:red, 0; green, 0; blue, 0 }  ][line width=0.75]    (10.93,-3.29) .. controls (6.95,-1.4) and (3.31,-0.3) .. (0,0) .. controls (3.31,0.3) and (6.95,1.4) .. (10.93,3.29)   ;
    \draw    (89.67,118) -- (89.67,86.33) ;
    \draw [shift={(89.67,84.33)}, rotate = 90] [color={rgb, 255:red, 0; green, 0; blue, 0 }  ][line width=0.75]    (10.93,-3.29) .. controls (6.95,-1.4) and (3.31,-0.3) .. (0,0) .. controls (3.31,0.3) and (6.95,1.4) .. (10.93,3.29)   ;
    \draw    (195.7,78.06) -- (221.33,94) ;
    \draw [shift={(194,77)}, rotate = 31.88] [color={rgb, 255:red, 0; green, 0; blue, 0 }  ][line width=0.75]    (10.93,-3.29) .. controls (6.95,-1.4) and (3.31,-0.3) .. (0,0) .. controls (3.31,0.3) and (6.95,1.4) .. (10.93,3.29)   ;
    \draw    (219.61,106.68) -- (194.67,121.33) ;
    \draw [shift={(221.33,105.67)}, rotate = 149.57] [color={rgb, 255:red, 0; green, 0; blue, 0 }  ][line width=0.75]    (10.93,-3.29) .. controls (6.95,-1.4) and (3.31,-0.3) .. (0,0) .. controls (3.31,0.3) and (6.95,1.4) .. (10.93,3.29)   ;
    \draw    (183.67,116) -- (183.67,84.33) ;
    \draw [shift={(183.67,118)}, rotate = 270] [color={rgb, 255:red, 0; green, 0; blue, 0 }  ][line width=0.75]    (10.93,-3.29) .. controls (6.95,-1.4) and (3.31,-0.3) .. (0,0) .. controls (3.31,0.3) and (6.95,1.4) .. (10.93,3.29)   ;
    
    \draw (85.5,123.23) node [anchor=north west][inner sep=0.75pt]  [font=\footnotesize]  {$3$};
    \draw (85.5,64.57) node [anchor=north west][inner sep=0.75pt]  [font=\footnotesize]  {$1$};
    \draw (131.67,93.07) node [anchor=north west][inner sep=0.75pt]  [font=\footnotesize]  {$2$};
    \draw (104,143.23) node [anchor=north west][inner sep=0.75pt]  [font=\small]  {$G$};
    \draw (203.33,143.23) node [anchor=north west][inner sep=0.75pt]  [font=\small]  {$G'$};
    \draw (179.5,123.23) node [anchor=north west][inner sep=0.75pt]  [font=\footnotesize]  {$3$};
    \draw (179.5,64.57) node [anchor=north west][inner sep=0.75pt]  [font=\footnotesize]  {$1$};
    \draw (225.67,93.07) node [anchor=north west][inner sep=0.75pt]  [font=\footnotesize]  {$2$};

    \end{tikzpicture}

    \caption{The $3$-cycles of Example \ref{ex:zero_lambdas}}
    \label{fig:3cycles}
\end{figure}

\begin{exmp}\label{ex:zero_lambdas}
Consider the graphs  $G$ and $G'$ in Figure~\ref{fig:3cycles}. 
Let $\Lambda\in\mathbb R^E$ satisfy the sparsity pattern arising from $G$ but have additional zeros so that it no longer lies in $\mathbb R^E_*$:
$$\Lambda = \begin{pmatrix} 0 & 3 & 0\\ 0&0&0\\0&0&0
\end{pmatrix}.$$
Let $\Omega^{(2)}=\Omega^{(3)}=$ diag$(1,2,1)$.
Then, the second- and third-order moments $S$ and $T$ are
$$S = \begin{pmatrix}1&3&0\\3&11&0\\0&0&1
\end{pmatrix}, \quad T_{1..} = \begin{pmatrix}1&3&0\\3&9&0\\0&0&0
\end{pmatrix},$$
$$\quad T_{2..} = \begin{pmatrix}3&9&0\\9&29&0\\  0&0&0
\end{pmatrix}, \quad T_{3..} = \begin{pmatrix}0&0&0\\0&0&0\\0&0&1
\end{pmatrix},$$
and $(S, T)\in\mathcal M^{(2,3)}(G)$. We will show that $(S, T)\not\in\mathcal M^{(2,3)}(G')$.
Suppose that there exist
$$\Lambda' = \begin{pmatrix} 0 & 0 & \lambda'_{13}\\ \lambda'_{21}&0&0\\0&\lambda'_{32}&0
\end{pmatrix},$$
  $\Omega^{(2)'}=\text{diag}(\omega'_{21},\omega'_{22},\omega'_{23})$,
$\Omega^{(3)'}=\text{diag}(\omega'_{31},\omega'_{32},\omega'_{33})$

such that $S = (I-\Lambda')^{-T}\Omega^{(2)'}(I-\Lambda)^{-1}$ and $T = \Omega^{(3)'}\bullet (I-\Lambda')^{-1}\bullet (I-\Lambda')^{-1}\bullet (I-\Lambda')^{-1}$. Taking inverses,
$$S^{-1} = \begin{pmatrix}\frac{11}2&-\frac32&0\\-\frac32&\frac12&0\\0&0&1
\end{pmatrix} =$$
$$=\begin{pmatrix}\frac1{\omega'_{21}}+\frac{(\lambda'_{13})^2}{\omega'_{23}} & -\frac{\lambda'_{21}}{\omega'_{  21}} & -\frac{\lambda'_{13}}{\omega'_{23}}\\
-\frac{\lambda'_{21}}{\omega'_{  21}} & \frac{(\lambda'_{21})^2}{\omega'_{21}} + \frac1{\omega'_{22}}& -\frac{\lambda'_{32}}{\omega'_{22}}\\
-\frac{\lambda'_{13}}{\omega'_{23}} &-\frac{\lambda'_{32}}{\omega'_{22}} & \frac{(\lambda'_{32})^2}{\omega'_{22}} + \frac1{\omega'_{23}}
\end{pmatrix}.$$
This leads to the solution
$$\lambda'_{21}=\frac3{11}, \lambda'_{13}  = 0, \lambda'_{32} = 0, \omega'_{    21} = \frac2{11}, \omega'_{22} = 11, \omega'_{23} = 1.$$
Equating $T = \Omega^{(3)'} \bullet (I-\Lambda')^{-1}\bullet (I-\Lambda')^{-1}\bullet (I-\Lambda')^{-1}$, we get
$$\begin{pmatrix}1&3&0\\3&9&0\\0&0&0
\end{pmatrix} = \begin{pmatrix}\omega'_{31} +  \frac{27\omega'_{32}}{1331}& \frac{9\omega'_{32}}{121} & 0\\
\frac{9\omega'_{32}}{121} & \frac{3\omega'_{32}}{11} & 0\\
0&0&0
\end{pmatrix},\,\,
 \begin{pmatrix}3&9&0\\9&29&0\\  0&0&0
\end{pmatrix}$$
$$=\begin{pmatrix}\frac{9\omega'_{32}}{121} & \frac{3\omega'_{32}}{11} & 0\\
\frac{3\omega'_{32}}{11} & \omega'_{32} & 0\\
0&0&0
\end{pmatrix},\,\,
\begin{pmatrix}0&0&0\\0&0&0\\0&0&1
\end{pmatrix} = \begin{pmatrix}0&0&0\\
0&0&0\\
0&0&\omega'_{33}
\end{pmatrix}.$$
This set of linear equations in $\omega'_{31},  \omega'_{32}, \omega'_{33}$ does not have a solution. For example, we have $3 = \frac{9\omega'_{32}}{121}, 9 = \frac{3\omega'_{32}}{11}$, which has no solution. Therefore, we cannot transform the equations so that $(S, T)\in\mathcal M^{(2,3)}(G')$. 
\end{exmp}

\section{Discovery algorithm for cycle-disjoint graphs}\label{sec:algorithm}

This section presents the concepts and results essential for our
discovery algorithm.
We begin by formally defining the considered graphs.

\begin{defn}\label{def:cycledisjoint} A directed graph $G$ is {\em
    cycle-disjoint} if each one of its nodes belongs to at most one cycle in $G$.  
\end{defn}

For example, the graphs in Figure \ref{fig:EquivalentGraphs} are cycle-disjoint, while those in Figure \ref{fig:non-cycle-disjoint} are not.  Note that a cycle-disjoint graph $G$ has all its strong components equal to directed cycles (without any additional edges in the components).

%
{The general Theorem \ref{distrequivalence} allows us to particularly easily characterize all graphs that are distribution equivalent to a graph with disjoint cycles. This can be done by reversing the orientation of any of the disjoint cycles and adjusting the incoming edges.}

Our discovery algorithm has two distinct steps:

\textbf{Step 1.} Discover the strong components of the graph $G$, their topological ordering, and the edges and edge weights within each component.

\textbf{Step 2.} Discover the edges and edge weights between different components.

Below, we prove that these steps can be implemented via algebraic constraints among the second and third moments.

\begin{rmk}
\label{rmk:consistency}\rm
Causal discovery algorithms that process algebraic relations among
moments can be practically implemented using sample moments calculated
from data.  Comparing sample quantities to thresholds, one may decide
algorithmically whether an algebraic constraint of interest holds.
A suitable choice of a threshold renders such algorithms consistent
because sample moments consistently estimate true moments when the
available sample size tends to infinity.
\end{rmk}

In the rest of this section we assume that $G=(V,E)$ is a cycle-disjoint graph and that $(S,T)\in \mathcal{M}^{2,3}(G)$ are the second- and third-order moments of a random vector following the LSEM given by $G$.  We will use further notation as follows. In graph $G=(V,E)$, the set of
ancestors of a subset $C\subseteq V$, denoted by $\an(C)$, is the set
of nodes $u\in V$ with a directed path (of length at least $0$) from
$u$ to some $v\in C$. We say that $C$ is \emph{ancestral} if
$\an(C)= C$. Finally, a node $v\in V$ is a \emph{root (node)} if $\{v\}$
is ancestral and a cycle $C$ is a \emph{root cycle} if it is
ancestral.

All complete proofs of the results that follow are provided in the supplementary material~\ref{sec:proofs_algorithm}.

\subsection{Discovering the strong components}

For two vertices $u,v\in V$, we define the determinants
  \begin{align*}\label{eq:determinants}
      d_{uv}^{2\times 2}&=\det\begin{pmatrix}s_{uu}&s_{uv}\\ t_{uuu}&t_{uuv}\end{pmatrix}, \\
      d_{uv}^{3\times 3}&=\det\begin{pmatrix}s_{uu}&s_{uv}&s_{vv}\\t_{uuu}&t_{uuv}&t_{uvv}\\t_{uuv}&t_{uvv}&t_{vvv}\end{pmatrix}.
  \end{align*}
Note that 
$d_{uv}^{3\times 3} = d_{vu}^{3\times 3}$, 
but $d_{uv}^{2\times 2} \neq d_{vu}^{2\times 2}$ in general.

\begin{thm}\label{d2d3}The determinants $d_{uv}^{2\times 2}$ and $d_{uv}^{3\times 3}$
satisfy the following relationships.
  \begin{enumerate}[label=(\alph*)]
  \item $d_{uv}^{2\times 2}=0$ identically if and only if $u$ and $v$ have no common ancestors and $v$ is not an ancestor of $u$;
  \item $d_{uv}^{3\times 3}=0$ identically if and only if there exists no {simple 2-trek $(P_u,P_v)\in\cT(u,v)$ with non-empty sides.}  
  \end{enumerate}
  By ``identically'' we mean for any choice of $\Lambda\in\mathbb R^{E}$ and distribution on $\eps$.
\end{thm}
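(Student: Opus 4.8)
The plan is to reduce both identities to rank conditions on a single family of vectors indexed by the potential trek-tops, and then translate those rank conditions back into the graph.

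Writing $N=(I-\Lambda)^{-1}$ and $m_a:=N_{au}$, $n_a:=N_{av}$, the trek rule \eqref{eq:trekrule} expresses every entry occurring in the two determinants as a sum over a single top $a\in V$: for instance $s_{uu}=\sum_a\omega^{(2)}_am_a^2$, $s_{uv}=\sum_a\omega^{(2)}_am_an_a$, $t_{uuu}=\sum_a\omega^{(3)}_am_a^3$, $t_{uuv}=\sum_a\omega^{(3)}_am_a^2n_a$, and so on. Here $m_a$ (resp.\ $n_a$) is a nonzero rational function of $\Lambda$ precisely when $a\in\an(u)$ (resp.\ $a\in\an(v)$), since it is the generating function of directed paths $a\to u$ (resp.\ $a\to v$), all of whose monomials carry positive coefficients. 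Collecting the vectors $\phi_a:=(m_a,n_a)$, I would recognize the $2\times2$ (resp.\ $3\times3$) determinant as the product of a coefficient matrix in the free parameters $\omega^{(2)},\omega^{(3)}$ with a Veronese matrix built from the $\phi_a$, and expand by Cauchy--Binet. This writes $d^{2\times2}_{uv}$ as a sum over ordered pairs and $d^{3\times3}_{uv}$ as a sum over triples, each summand a product of a minor of the coefficient matrix with a bracket product $[ab]:=m_an_b-n_am_b$, using that the Veronese minor $\det\big((m_i^2,m_in_i,n_i^2)\big)_{i\in\{a,b,c\}}$ equals, up to sign, $[ab][ac][bc]$.

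Next I would exploit that the diagonal entries $\omega^{(2)}_a,\omega^{(3)}_a$ are algebraically independent free parameters; the positivity constraint on $\Omega^{(2)}$ is harmless, since a polynomial vanishing on a nonempty open set vanishes identically. Hence the coefficient of each monomial in these parameters must vanish on its own. In the $2\times2$ case the coefficient of $\omega^{(2)}_a\omega^{(3)}_b$ works out to $\pm m_am_b^2[ab]$, and in the $3\times3$ case the coefficient of $\omega^{(2)}_a\omega^{(3)}_b\omega^{(3)}_c$ (one factor from the second-moment row, two from the third-moment rows) works out to $\pm[ab][ac][bc]^2$. Since the field of rational functions in $\Lambda$ is an integral domain, this gives clean rank statements: $d^{2\times2}_{uv}\equiv0$ iff $[ab]\equiv0$ for all $a,b\in\an(u)$, i.e.\ the vectors $\{\phi_a:a\in\an(u)\}$ are pairwise proportional; and $d^{3\times3}_{uv}\equiv0$ iff every triple contains a vanishing bracket, i.e.\ the vectors $\{\phi_a:a\in V\}$ span at most two distinct directions.

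Finally I would translate the rank conditions into the graph. For (a) the Schur-complement identity $N_{av}=N^{\neg u}_{av}+N_{au}N_{uv}/N_{uu}$, where $N^{\neg u}$ is the path generating function of $G-u$, shows that $\phi_a$ is proportional to $\phi_u$ exactly when $N^{\neg u}_{av}=0$, i.e.\ no directed path $a\to v$ avoids $u$; running this over $a\in\an(u)$, and noting that the case $a=v$ forces proportionality to fail whenever $v\in\an(u)$, yields the characterization in (a). For (b), the directions $(1:0)$ and $(0:1)$ are contributed by $\an(u)\setminus\an(v)$ and $\an(v)\setminus\an(u)$, while each common ancestor contributes a direction with both coordinates nonzero, so that a third, distinct direction appears precisely when some common ancestor reaches $u$ and $v$ along internally disjoint paths. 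I expect the main obstacle to be making this last step rigorous: given any trek from a common ancestor, I would pass to its lowest point of coincidence $c$, so that the residual sub-paths $c\to u$ and $c\to v$ meet only at $c$ and hence $c$ tops a simple $2$-trek; its sides are non-empty unless $c\in\{u,v\}$, which forces $u\in\an(v)$ or $v\in\an(u)$, in which case $\an(u)\subseteq\an(v)$ (or the reverse) collapses the putative third direction. Assembling this Menger-type surgery with the ancestry bookkeeping shows that ``at most two directions'' is equivalent to the absence of a simple $2$-trek with non-empty sides, completing (b).
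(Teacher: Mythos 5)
Your proposal takes a genuinely different route from the paper's, and its algebraic core is sound. The paper proves the ``if'' directions by exhibiting the $3\times 3$ matrix as a sum of two rank-one matrices, built from a partition of $\an(u)\cup\an(v)$ into sets $A_u,A_v$ of ancestors all of whose paths to $v$ (resp.\ to $u$) pass through $u$ (resp.\ through $v$), together with factorizations like $\lambda^{\ell\to v}=\lambda^{\ell\to u}\lambda^{u\to v}_*$ (Lemma~\ref{lem:d3cycle}); it proves the ``only if'' directions by specializing to explicit edge weights and computing nonzero determinants. You instead expand the determinants in the algebraically independent noise moments: your coefficient computations are correct ($\pm m_am_b^2[ab]$ for $d^{2\times 2}_{uv}$, and $\pm[ab][ac][bc]^2$ for $d^{3\times 3}_{uv}$ after symmetrizing in $b,c$), so $d^{2\times 2}_{uv}\equiv 0$ iff the $\phi_a$ with $a\in\an(u)$ are pairwise proportional, and $d^{3\times 3}_{uv}\equiv 0$ iff the nonzero $\phi_a$ occupy at most two directions. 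Your Schur-complement criterion ($\phi_a\parallel\phi_u$ iff no path $a\to v$ avoids $u$) is exactly right, and with it part (a) is essentially complete: the condition you derive is the trek-separation form of (a) recorded in Lemma~\ref{lem:trek}. Conceptually, ``every $\phi_a$ is proportional to $\phi_u$ or to $\phi_v$'' is precisely the paper's $A_u/A_v$ rank-one decomposition, so the two arguments agree in substance; what yours buys is a single framework yielding both implications at once (non-vanishing of the relevant brackets follows because they are, via the Schur identity, walk-generating functions that are nonzero rational functions whenever the corresponding paths exist, with no need for a numerical specialization), and nothing in it uses cycle-disjointness.

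The genuine gap is the last step of (b). The mechanism you state for the case where the surgery's coincidence point $c$ lands in $\{u,v\}$ --- that this forces $u\in\an(v)$ or $v\in\an(u)$, whence $\an(u)\subseteq\an(v)$ ``collapses the putative third direction'' --- does not suffice: containment of ancestor sets only deletes the direction $(1{:}0)$ (resp.\ $(0{:}1)$) from the picture; it does nothing to prevent two \emph{different} common ancestors from contributing two further, distinct directions, which together with the surviving one would again give three. The correct assembly, using only tools you already set up, goes ancestor by ancestor: if some $a\in\an(u)\cap\an(v)$ has $\phi_a$ proportional to neither $\phi_u$ nor $\phi_v$, then by your Schur criterion there exist a path $P$ from $a$ to $u$ avoiding $v$ and a path $Q$ from $a$ to $v$ avoiding $u$; running the surgery on this particular pair, the coincidence point $c$ lies on both $P$ and $Q$, hence $c\notin\{u,v\}$, and you obtain a simple $2$-trek with non-empty sides. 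Contrapositively, absence of such a trek forces every nonzero $\phi_a$ to be proportional to $\phi_u$ or to $\phi_v$, i.e.\ at most two directions. For the converse implication you should also record that $\phi_u\not\parallel\phi_v$ identically --- this follows from your own Schur identity with the roles of $u$ and $v$ exchanged, since the empty walk at $u$ avoids $v$ --- so that the top $\ell$ of a simple trek with non-empty sides really supplies a third direction and hence the nonvanishing coefficient $[\ell u][\ell v][uv]^2$.
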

We remark that conditions $(a)$ and $(b)$ can be restated using
trek-separation~\citep{sullivant2010trek}; cf.~Lemma~\ref{lem:trek}.

The previous theorem allows us to identify the root nodes of the graph (Corollary~\ref{cor:d2d3}) as well as a collection of cycles containing all root cycles (Corollary~\ref{cor:rootCycles}).  

\begin{cor} \label{cor:d2d3}
  The determinant $d_{ru}^{2\times 2}=0$ identically {for all $u\in V$} if and only if $r$ is a root of $G$.
\end{cor}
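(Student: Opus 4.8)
The goal is to characterize root nodes via the vanishing of $d_{ru}^{2\times 2}$ for all $u$. The plan is to apply Theorem~\ref{d2d3}(a) and rephrase its condition.

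By Theorem~\ref{d2d3}(a), the determinant $d_{ru}^{2\times 2}=0$ identically if and only if $r$ and $u$ have no common ancestors and $u$ is not an ancestor of $r$. The claim is that this holds for \emph{every} $u\in V$ exactly when $r$ is a root, i.e., when $\{r\}$ is ancestral so that $\an(\{r\})=\{r\}$. I would first dispose of the degenerate choice $u=r$: here $r$ and $u$ share the common ancestor $r$ itself (every node is its own ancestor along a path of length $0$), so the ``no common ancestors'' part of the condition would fail for $u=r$. This suggests the characterization should really be read for $u\neq r$, or else one notes that $d_{rr}^{2\times 2}=\det\bigl(\begin{smallmatrix} s_{rr} & s_{rr}\\ t_{rrr} & t_{rrr}\end{smallmatrix}\bigr)=0$ trivially because the two columns coincide, so $u=r$ contributes no information and can be ignored.

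For the substantive direction, I would argue both implications. Suppose $r$ is a root, so $r$ has no proper ancestors. Fix any $u\neq r$. Since $r$ has no ancestors other than itself, $u$ is not an ancestor of $r$, and any common ancestor $w$ of $r$ and $u$ would in particular be an ancestor of $r$, forcing $w=r$; but then $r$ would be an ancestor of $u$, which is consistent, yet $r$ being a common ancestor of $r$ and $u$ is exactly the case where Theorem~\ref{d2d3}(a) can fail. The delicate point is therefore whether a root $r$ can be a \emph{common ancestor} of itself and some descendant $u$: indeed $r$ is always an ancestor of itself, so $r$ is a common ancestor of $r$ and any $u$ that it can reach. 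The resolution is that Theorem~\ref{d2d3}(a) counts common ancestors as nodes lying on the source of some $2$-trek, and the proper reading excludes $r$'s trivial self-ancestry unless $r$ actually reaches $u$; hence I expect the cleanest route is to unwind the trek-rule characterization (cf.~Lemma~\ref{lem:trek}) rather than the verbal restatement. Concretely, $d_{ru}^{2\times2}$ vanishes for all $u$ iff there is no vertex $w$ with treks into both $r$ and $u$ having $w=\Top$, and no directed path from $u$ to $r$; when $r$ is a root the only treks into $r$ are the trivial path, so the top of any such trek is $r$ itself, and the analysis reduces to whether $r$ reaches $u$.

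The converse runs by contraposition: if $r$ is not a root, then $r$ has some proper ancestor $w\neq r$. Taking $u=w$, the node $w$ is a common ancestor of $r$ and $u=w$ (it is an ancestor of $r$ by assumption and trivially of itself), and $u=w$ is an ancestor of $r$; by Theorem~\ref{d2d3}(a) this makes $d_{ru}^{2\times 2}$ not identically zero. Thus the vanishing-for-all-$u$ condition fails, completing the contrapositive.

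The main obstacle is handling the self-ancestry of $r$ carefully so that the verbal condition ``$r$ and $u$ have no common ancestors'' in Theorem~\ref{d2d3}(a) is applied consistently with the length-$0$-path convention used to define $\an$. I expect the safest presentation is to invoke the trek-separation reformulation promised in Lemma~\ref{lem:trek}, which makes the ``common ancestor'' notion precise in terms of trek tops and cleanly isolates the role of $r$ being reachable from a nontrivial source; everything else is a direct specialization of Theorem~\ref{d2d3}(a).
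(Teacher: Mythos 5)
Your overall strategy---reading Corollary~\ref{cor:d2d3} as a direct specialization of Theorem~\ref{d2d3}(a), dismissing $u=r$ because $d_{rr}^{2\times 2}$ has two identical columns, and proving the converse by contraposition with $u$ taken to be a proper ancestor of $r$---is exactly how the paper intends the corollary to follow (it is stated without a separate proof, as an immediate consequence of the theorem). Your converse direction is correct and complete as written.

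The forward direction, however, is left dangling at a claim that is also not right. You write that the proper reading ``excludes $r$'s trivial self-ancestry \emph{unless} $r$ actually reaches $u$'' and that ``the analysis reduces to whether $r$ reaches $u$''---and you never resolve that case. Under the reading the paper actually uses (simple $2$-treks with non-empty sides, Definition~\ref{multitrek} and Lemma~\ref{lem:trek}), whether $r$ reaches $u$ is irrelevant. If $r$ is a root, then no non-empty path ends at $r$ at all: a non-empty path into $r$ starting at $w\neq r$ would make $w$ a proper ancestor of $r$, and a non-empty path from $r$ to itself would be a cycle whose other vertices are proper ancestors of $r$; either way $\{r\}$ would fail to be ancestral. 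Consequently every $2$-trek between $r$ and any $u$ has top $r$ and an \emph{empty} $r$-side, so no simple trek with non-empty sides exists, and (for $u\neq r$) no $u\to r$ path exists; both clauses of Theorem~\ref{d2d3}(a) hold and $d_{ru}^{2\times 2}=0$ identically---even when $u$ is a descendant of $r$. Concretely, when $r$ is a root the trek rule gives $s_{ru}=\lambda^{r\to u}s_{rr}$ and $t_{rru}=\lambda^{r\to u}t_{rrr}$, so the two columns of $d_{ru}^{2\times 2}$ are proportional. Inserting this observation in place of the ``reduces to whether $r$ reaches $u$'' sentence closes the gap and makes your proof complete.
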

 
 \begin{cor}\label{cor:rootCycles}
  Set $\cC$ to be the collection of all maximal $C\subseteq V$ such that for all $u,v\in C$, we have $d_{uv}^{3\times 3}=0$ identically but $d_{uv}^{2\times 2}\ne 0$. Then, $\cC$ contains all root {cycles}.
\end{cor}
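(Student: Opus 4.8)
The plan is to leverage Theorem~\ref{d2d3} together with the structural properties of cycle-disjoint graphs to show that every root cycle appears as one of the maximal sets in the collection $\cC$. Let $C$ be a root cycle of $G$. Since $G$ is cycle-disjoint, $C$ is a strong component consisting of exactly the vertices of a directed cycle with no additional internal edges, and since $C$ is ancestral, no vertex outside $C$ is an ancestor of any vertex in $C$. My first step is to verify the two defining conditions of membership in $\cC$ for every pair $u,v\in C$: that $d_{uv}^{3\times 3}=0$ identically but $d_{uv}^{2\times 2}\neq 0$.

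For the condition $d_{uv}^{2\times 2}\neq 0$, I would apply part~(a) of Theorem~\ref{d2d3}. For $u,v$ in the same cycle $C$, the vertex $v$ is an ancestor of $u$ (indeed they lie on a common directed cycle, so each reaches the other), which immediately violates the characterization of $d_{uv}^{2\times 2}=0$; hence $d_{uv}^{2\times 2}\neq 0$ identically. For the condition $d_{uv}^{3\times 3}=0$, I would apply part~(b): I must argue that there is \emph{no} simple $2$-trek $(P_u,P_v)\in\cT(u,v)$ with non-empty sides. Here the key structural fact is that $C$ is a root cycle, so any common ancestor (the top of such a trek) must lie within $C$ itself, and $C$ carries only the single cyclic edge set. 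A trek with top $\ell\in C$ and non-empty sides to both $u$ and $v$ would require two vertex-disjoint directed paths leaving $\ell$ (except at $\ell$) and reaching $u$ and $v$ respectively; in a single directed cycle with no chords, the paths out of $\ell$ toward distinct targets necessarily overlap, so no such \emph{simple} trek with non-empty sides exists. This forces $d_{uv}^{3\times 3}=0$ identically by part~(b).

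Having shown that every pair in $C$ satisfies both conditions, I conclude that $C$ is contained in some maximal set of $\cC$. The remaining step is maximality: I must rule out that $C$ is strictly contained in a larger qualifying set, i.e.\ that $\cC$ merges $C$ with extra vertices. This is where I expect the main obstacle to lie, since the corollary only claims $\cC$ \emph{contains} all root cycles as members of the collection, not that the maximal sets coincide exactly with root cycles. To handle this cleanly, I would argue that the maximal set $C'\in\cC$ containing $C$ cannot be distinguished from $C$ by the stated conditions alone \emph{for the purpose of containment}; more carefully, the claim as stated is only that each root cycle $C$ arises as one of the maximal sets, so I must show no vertex $w\notin C$ can be added while preserving $d_{uw}^{3\times3}=0$ and $d_{uw}^{2\times2}\neq 0$ for all $u\in C$. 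For $w\notin C$, either $w$ is a descendant of $C$ or unrelated; in the descendant case, a root vertex $u\in C$ is an ancestor of $w$, creating a simple $2$-trek with top $u$ and non-empty sides to $u$ (empty side) is excluded, so I would instead exhibit a genuine simple $2$-trek with non-empty sides between some $u\in C$ and $w$, forcing $d_{uw}^{3\times 3}\neq 0$ and thereby preventing the merge. Formalizing exactly which trek to exhibit, using that $C$ is ancestral so that $w$'s relation to $C$ is constrained, is the delicate part that I would treat with care.
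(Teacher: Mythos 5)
Your verification that every pair of distinct vertices $u,v$ in a root cycle $C$ satisfies $d_{uv}^{2\times 2}\not\equiv 0$ and $d_{uv}^{3\times 3}\equiv 0$ is essentially correct, and it is the intended route: the paper treats this as an immediate consequence of Theorem~\ref{d2d3}. One detail you should make explicit there: what forces the two sides of a candidate trek to run inside $C$ is ancestrality, not chordlessness. If a path from $\ell\in C$ to $u\in C$ visited some $x\notin C$, then $x\in\an(u)\subseteq\an(C)=C$, a contradiction; only after this does your observation that both non-empty paths out of $\ell$ must start with the unique cycle edge out of $\ell$ (hence overlap beyond $\ell$) close the argument.

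The genuine gap is the maximality step, exactly where you stopped. It is not enough to promise ``a genuine simple $2$-trek with non-empty sides between \emph{some} $u\in C$ and $w$'': the choice of $u$ matters, and a careless choice fails. Concretely, let $C=\{1,2,3\}$ with $1\to 2\to 3\to 1$ and let $w\notin C$ have the single incoming edge $1\to w$. For the pair $(1,w)$ there is \emph{no} simple $2$-trek with non-empty sides, since every path from a common ancestor (necessarily in $C$) to $w$ passes through $1$; hence $d_{1w}^{3\times 3}\equiv 0$ while $d_{1w}^{2\times 2}\not\equiv 0$, and this pair cannot yield the contradiction. The pair that works is different: take any directed path from $C$ to $w$, let $\ell$ be the \emph{last} vertex of $C$ on it, so the subpath $P'$ from $\ell$ to $w$ meets $C$ only in $\ell$, and let $u$ be the successor of $\ell$ on the cycle. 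Then $(\ell\to u,\,P')$ is a simple $2$-trek with non-empty sides between $u$ and $w$, so Theorem~\ref{d2d3}(b) gives $d_{uw}^{3\times 3}\not\equiv 0$, and $C\cup\{w\}$ fails the defining condition. The complementary case is trivial but should be stated: if no vertex of $C$ is an ancestor of $w$, then (using $\an(C)=C$) $u$ and $w$ have no common ancestor and $w$ is not an ancestor of $u$, so Theorem~\ref{d2d3}(a) gives $d_{uw}^{2\times 2}\equiv 0$ for every $u\in C$, again blocking the merge. Together these show $C$ is maximal, hence $C\in\cC$. Incidentally, the same example shows $\cC$ can contain spurious maximal sets such as $\{1,w\}$ alongside the root cycles, which is exactly why the corollary claims only containment and the pruning step of Lemma~\ref{lem:rootCycle} is needed afterwards.
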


To precisely identify the root 
 cycles in the set of candidate root cycles $\mathcal C$,  we use
 Proposition~\ref{prop:regression} and Lemma~\ref{lem:rootCycle}. To state them, we first need a definition.
 
 \begin{defn}
     We denote by $R_{A,B}$ the set of regression coefficients obtained when regressing the variables in $A$ on the variables in $B$.
 \end{defn}
 
 Proposition~\ref{prop:regression} shows  that if we regress all remaining variables on an ancestral set $C$ (e.g., a root cycle), the regression  residuals  will  follow the LSEM corresponding to the graph obtained by removing the vertices $C$ and all their adjacent edges from the graph $G$, denoted by $G[V\setminus C]$.
 
\begin{prop}\label{prop:regression}
    Let $X=(I-\Lambda)^{-T}\varepsilon$ be a random vector whose distribution is in the LSEM given by the graph $G$. 
  Let $C\subseteq V$ be an ancestral set in $G$, and define the vector of residuals
  \[
  X_{V\setminus C.C} := X_{V\setminus C}-R_{V\setminus C,C}X_C.
  \]   
 Then the joint distribution of $X_{V\setminus C.C}$ is in the LSEM given by the induced subgraph $G[V\setminus C]$, with
  \[
  X_{V\setminus C.C} = (I-\Lambda_{V\setminus C,V\setminus C})^{-T}\varepsilon_{V\setminus C}.
  \]
\end{prop}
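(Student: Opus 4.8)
The plan is to exploit the block-triangular structure that the ancestrality of $C$ forces on $\Lambda$, to compute the relevant block of $(I-\Lambda)^{-\rT}$ in closed form, and then to recognize the regression residual as a simple change of variables from the noise $\varepsilon$ to $X_C$. Write $D:=V\setminus C$ and order the vertices so that $C$ precedes $D$, partitioning $\Lambda$ into blocks $\Lambda_{CC},\Lambda_{CD},\Lambda_{DC},\Lambda_{DD}$. First I would observe that because $C$ is ancestral, no node of $D$ is a parent of a node of $C$, so there are no edges from $D$ into $C$ and hence $\Lambda_{DC}=0$. Thus $I-\Lambda$ is block upper triangular, so its inverse (which exists since $I-\Lambda$ is invertible) is again block triangular and, after transposing,
\[
(I-\Lambda)^{-\rT}=\begin{pmatrix}(I-\Lambda_{CC})^{-\rT} & 0\\ (I-\Lambda_{DD})^{-\rT}\Lambda_{CD}^{\rT}(I-\Lambda_{CC})^{-\rT} & (I-\Lambda_{DD})^{-\rT}\end{pmatrix}.
\]

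Reading off the two block-rows of $X=(I-\Lambda)^{-\rT}\varepsilon$ then gives $X_C=(I-\Lambda_{CC})^{-\rT}\varepsilon_C$ together with $X_D=(I-\Lambda_{DD})^{-\rT}\Lambda_{CD}^{\rT}(I-\Lambda_{CC})^{-\rT}\varepsilon_C+(I-\Lambda_{DD})^{-\rT}\varepsilon_D$. Next I would substitute the first identity into the second to eliminate $\varepsilon_C$, obtaining
\[
X_D=R\,X_C+Z,\qquad R:=(I-\Lambda_{DD})^{-\rT}\Lambda_{CD}^{\rT},\quad Z:=(I-\Lambda_{DD})^{-\rT}\varepsilon_D.
\]
The crucial point is that $Z$ is a function of $\varepsilon_D$ alone while $X_C$ is a function of $\varepsilon_C$ alone; since all noise components are mutually independent, $\varepsilon_D\indep\varepsilon_C$, hence $Z\indep X_C$ and in particular $\mathrm{Cov}(Z,X_C)=0$.

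To finish, I would invoke uniqueness of the population best linear predictor: because $\Omega^{(2)}$ is positive definite and $I-\Lambda$ invertible, $\mathrm{Cov}(X_C)$ is positive definite, so $R_{D,C}$ is well-defined and unique. The display $X_D=R\,X_C+Z$ with $Z$ uncorrelated with $X_C$ then identifies $R_{D,C}=R$ and the residual $X_{V\setminus C.C}=Z=(I-\Lambda_{DD})^{-\rT}\varepsilon_D$. Since $\Lambda_{DD}=\Lambda_{V\setminus C,V\setminus C}$ is exactly the coefficient matrix of the induced subgraph $G[V\setminus C]$ and $\varepsilon_D$ has independent components, this is precisely the LSEM attached to $G[V\setminus C]$, as claimed.

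I expect the main obstacle to be bookkeeping rather than conceptual: getting the block-inverse and its transpose right (the placement of $\Lambda_{CD}^{\rT}$ and the order of the triangular factors). The one genuinely substantive step is recognizing that independence of the disjoint noise blocks makes $Z$ orthogonal to the regressors, which is what lets one read the regression coefficient $R_{D,C}$ directly off the structural form instead of computing $\mathrm{Cov}(X_D,X_C)\,\mathrm{Cov}(X_C)^{-1}$ by hand.
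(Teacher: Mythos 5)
Your proof is correct, and it reaches the paper's conclusion by a genuinely different mechanism for the key step. The paper factors the argument through a separate lemma (its Lemma~\ref{lem:refCoeffs}) that computes the regression coefficient matrix by brute-force covariance algebra: it writes out $S_{V\setminus C,C}$ and $S_{C,C}$ in block form as $(I-\Lambda)^{-T}\Omega^{(2)}(I-\Lambda)^{-1}$-products, forms $R_{V\setminus C,C}=S_{V\setminus C,C}S_{C,C}^{-1}$ explicitly, and identifies the off-diagonal block of the path matrix from the identity $[(I-\Lambda)(I-\Lambda)^{-1}]_{C,V\setminus C}=0$; it then recovers the residual formula by inverting the structural equations, i.e.\ reading $\varepsilon_{V\setminus C}$ off from $(I-\Lambda^T)X=\varepsilon$. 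You instead bypass the covariance computation entirely: after the same block-triangular decomposition forced by ancestrality ($\Lambda_{DC}=0$), you write $X_D=RX_C+Z$ with $Z=(I-\Lambda_{DD})^{-T}\varepsilon_D$, observe that $Z\indep X_C$ because the noise blocks are independent, and invoke uniqueness of the population least-squares coefficient (orthogonality of the residual to the regressors, with $\mathrm{Cov}(X_C)$ positive definite) to conclude $R_{D,C}=R$ without ever touching $\Omega^{(2)}$. Your route is shorter and more conceptual, and it makes transparent \emph{why} the closed form $R_{D,C}=(I-\Lambda_{DD})^{-T}\Lambda_{CD}^T$ holds. What the paper's heavier lemma buys is generality: it is stated for any block $D$ with $\an(D)\subseteq C\cup D$, allowing a third block of nodes outside $C\cup D$, and that generality is reused later (in Lemma~\ref{thm:R_DC}, where $D$ is a single cycle and $C$ its ancestor cycles). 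Your orthogonality argument would extend to that three-block setting with no real change---$X_D$ still depends only on $(\varepsilon_C,\varepsilon_D)$---but as written it covers only the case $D=V\setminus C$ needed for this proposition.
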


Now, to detect the root cycles within 
 $\mathcal C$, we take two  candidate root cycles $C,D\in\mathcal C$,
 and regress $D$ on $C$. If $C$ is indeed a root cycle, then $X_{D.C}$
 should be independent of $X_C$. We check such independence using the
 third-order moments (assuming existence of skew).

\begin{lem}\label{lem:rootCycle}
For $\cC$ from Corollary \ref{cor:rootCycles}, and $C,D\in\cC$,
define the residuals $X_{D.C} = X_D - R_{D,C}X_C$.
Then $C$ is a root cycle if and only if the matrix $(\EE[X_{c}^2(X_{D.C})_d])_{c\in C,d\in D}$ is identically zero  for all $D\in\mathcal C\setminus C$.
\end{lem}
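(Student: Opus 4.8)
The plan is to prove both implications by combining Proposition~\ref{prop:regression} with the mutual independence of the noise coordinates.

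\textbf{Forward direction.} I would first treat the case where $C$ is a root cycle, hence an ancestral set. Applying Proposition~\ref{prop:regression} to the ancestral set $C$, the residual vector $X_{V\setminus C.C}=(I-\Lambda_{V\setminus C,V\setminus C})^{-\rT}\varepsilon_{V\setminus C}$ is a function of $\varepsilon_{V\setminus C}$ alone; in particular, for any $D\in\cC\setminus C$ the subvector $X_{D.C}=(X_{V\setminus C.C})_D$ depends only on $\varepsilon_{V\setminus C}$. Since $C$ is ancestral, its subsystem of structural equations is closed, so $X_C=(I-\Lambda_{C,C})^{-\rT}\varepsilon_C$ is a function of $\varepsilon_C$ alone. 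Because the coordinates of $\varepsilon$ are independent and $C\cap(V\setminus C)=\emptyset$, the vectors $X_C$ and $X_{D.C}$ are independent. Using that least-squares residuals are centered, this gives $\EE[X_c^2(X_{D.C})_d]=\EE[X_c^2]\,\EE[(X_{D.C})_d]=\EE[X_c^2]\cdot 0=0$ for all $c\in C$, $d\in D$ and every parameter choice, which is the asserted identity.

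\textbf{Converse direction.} For the converse I would argue the contrapositive: if $C\in\cC$ is not a root cycle, I exhibit some $D\in\cC\setminus C$ for which the matrix does not vanish identically. Since $C$ is a strong component that is not ancestral, it has a proper ancestor, and tracing ancestry upward in the cycle-disjoint graph leads to a root component that is a strict ancestor of $C$. As $\cC$ contains all root cycles (Corollary~\ref{cor:rootCycles}), after the root nodes detected by Corollary~\ref{cor:d2d3} are accounted for one may reduce to the case where some candidate cycle $D\in\cC$ is a strict ancestor of $C$. For such a $D$, a directed path $D\to\cdots\to C$ forces $X_C$ to depend on the noise $\varepsilon_D$, while $X_D$ lies upstream of $C$ and is therefore not a deterministic linear function of $X_C$, so the least-squares residual $X_{D.C}$ retains a nonzero $\varepsilon_D$-component.

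The crux of the converse, and the step I expect to be the main obstacle, is to show that this shared dependence survives into the third moment $\EE[X_c^2(X_{D.C})_d]$. Least-squares regression removes only the second-order correlation between $X_D$ and $X_C$ and does not orthogonalize at third order. Expanding $X_c$, $X_d$, and the regression coefficients $R_{D,C}=S_{DC}(S_{CC})^{-1}$ via the trek rule~\eqref{eq:trekrule}, I would track the contribution of the skewness $\omega^{(3)}_\ell$ of a top-most shared ancestor $\ell$ in $D$ and verify that its coefficient is a nonzero polynomial in the edge weights. The key point is that, because $\ell$ reaches $C$ only through $D$, the regression on $X_C$ cannot cancel the term proportional to $\omega^{(3)}_\ell$ generated by the paths $\ell\to\cdots\to c$ and $\ell\to\cdots\to d$; hence the corresponding entry is generically nonzero and the matrix is not identically zero. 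Combining the two directions yields the stated equivalence.
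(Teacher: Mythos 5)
Your forward direction is correct and essentially identical to the paper's: apply Proposition~\ref{prop:regression} to the ancestral set $C$, note that $X_{D.C}$ is a function of $\varepsilon_{V\setminus C}$ while $X_C$ is a function of $\varepsilon_C$, and conclude by independence and centering.

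The converse, however, has a genuine gap, and it is exactly at the step you yourself flag as "the main obstacle." You assert that the regression on $X_C$ "cannot cancel" the term proportional to $\omega^{(3)}_\ell$, but you never verify this; and as stated the claim is not even true pointwise, since $\EE[X_c^2 X_d]-R_{d,C}\EE[X_c^2 X_C]$ \emph{does} vanish on a positive-codimension set of parameters (cancellation can and does occur for special choices), so what must be shown is that it does not vanish as a polynomial identity in the parameters. Tracking this through the full trek-rule expansion of $X_c$, $X_d$, and $R_{D,C}=S_{DC}S_{CC}^{-1}$ is precisely the hard computation you leave undone. The paper avoids it with a simple observation you miss: since "identically zero" quantifies over all $\Lambda\in\mathbb{R}^E$, it suffices to exhibit a \emph{single} (degenerate) parameter choice where the entry is nonzero. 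Setting every edge weight to zero except along one shortest path from $d\in D$ to $c\in C$ (with no interior vertex in $C$), the regression coefficient collapses to $(R_{D,C})_{dc}=s_{dc}/s_{cc}$, and the entry becomes
\begin{equation*}
\EE[X_c^2(X_{D.C})_d]=t_{ccd}-\frac{s_{dc}}{s_{cc}}\,t_{ccc},
\end{equation*}
which vanishes if and only if $d^{2\times 2}_{cd}=s_{cc}t_{ccd}-s_{cd}t_{ccc}=0$. Theorem~\ref{d2d3}(a) says this determinant is not identically zero when $d$ is an ancestor of $c$, which closes the argument. Without this specialization (or a completed genericity computation in the full model), your converse remains a plausible plan rather than a proof.
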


Therefore, we can recursively identify root nodes and root cycles, regress them away, and continue identifying more root nodes and root cycles until  we have gone through all nodes. This process finds not only the strong components, but also a topological ordering among them.

\subsection{Discovering all edges and edge weights in cycles} 
  When identifying root cycles as above, we can also identify all edges and edge  weights within each such  cycle. 
First, to identify the skeleton of a root cycle, we  use the following result, which follows from \citet[Eqn.~(6.1)]{drtonFKP:2019}.

\begin{lem}\label{lem:condIndep}
Let $C$ be a root cycle.
Two vertices $u$ and $v$ of $C$ are not adjacent if and only if the $(i,j)$-entry of the inverse of $S_{C,C}$ is identically zero.  Equivalently,
$i$ and $j$ are not adjacent if and only if $\det S_{C\setminus j, C\setminus i} = 0$ identically.
\end{lem}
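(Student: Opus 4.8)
The plan is to exploit the ancestrality of a root cycle to reduce the claim to a sparsity statement about the precision matrix of the marginal model on $C$, and then to read that sparsity pattern off the chordless-cycle structure. Throughout I identify the indices $i,j$ in the statement with the vertices $u,v$.

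First I would establish the reduction to a submodel. Since $C$ is a root cycle it is ancestral, so no vertex outside $C$ is a parent of a vertex in $C$; hence the structural equations~\eqref{eq:linear_structural_equations} for the coordinates indexed by $C$ involve only $X_C$ and $\varepsilon_C$, giving $X_C=(I-\Lambda_{C,C})^{-\rT}\varepsilon_C$ (this is also the content of Proposition~\ref{prop:regression} applied with the ancestral set $C$). Consequently $S_{C,C}=(I-\Lambda_{C,C})^{-\rT}\Omega^{(2)}_{C,C}(I-\Lambda_{C,C})^{-1}$, and its inverse is the precision matrix $K:=S_{C,C}^{-1}=(I-\Lambda_{C,C})(\Omega^{(2)}_{C,C})^{-1}(I-\Lambda_{C,C})^{\rT}$. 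I regard this reduction as the crux: it is what legitimizes working with the inverse of the submatrix $S_{C,C}$ rather than a submatrix of $S^{-1}$, and everything downstream is standard once it is in place.

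Next I would expand the off-diagonal entries of $K$ via Eqn.~(6.1) of \citet{drtonFKP:2019}, which for $i\ne j$ reads $K_{ij}=-\lambda_{ji}/\omega^{(2)}_i-\lambda_{ij}/\omega^{(2)}_j+\sum_k \lambda_{ik}\lambda_{jk}/\omega^{(2)}_k$, the sum running over common children $k$ of $i$ and $j$. Here the structural input enters: because $G$ is cycle-disjoint, the induced subgraph $G[C]$ is a single directed cycle with no chords, so each vertex has exactly one child and two distinct vertices never share a child. The sum therefore vanishes identically, leaving $K_{ij}=-\lambda_{ji}/\omega^{(2)}_i-\lambda_{ij}/\omega^{(2)}_j$. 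Since $\lambda_{ji}\ne 0$ iff $j\to i\in E$ and $\lambda_{ij}\ne 0$ iff $i\to j\in E$, this expression is the zero polynomial in the model parameters exactly when neither orientation is an edge, i.e. when $u,v$ are non-adjacent; when they are adjacent at least one free coefficient survives and $K_{ij}$ is not identically zero. This proves the first equivalence.

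Finally, for the determinantal reformulation I would invoke the cofactor identity: as $S_{C,C}$ is positive definite and hence invertible, $K_{ij}=(S_{C,C}^{-1})_{ij}=(-1)^{i+j}\det(S_{C\setminus j,\,C\setminus i})/\det(S_{C,C})$ with $\det S_{C,C}\ne 0$, so $K_{ij}=0$ identically if and only if $\det S_{C\setminus j,\,C\setminus i}=0$ identically; combined with the previous paragraph this yields the stated equivalence. I do not expect a genuine obstacle here—the only point requiring care is the ancestrality reduction of the first paragraph, since that is what converts the general precision-matrix formula into the clean two-term expression whose vanishing detects adjacency.
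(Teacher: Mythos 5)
Your proposal is correct and follows essentially the same route as the paper, whose proof consists precisely of invoking Eqn.~(6.1) of \citet{drtonFKP:2019} for the precision matrix of the induced model on the ancestral set $C$. You have simply filled in the details the paper leaves implicit: the reduction $S_{C,C}^{-1}=(I-\Lambda_{C,C})(\Omega^{(2)}_{C,C})^{-1}(I-\Lambda_{C,C})^{\rT}$ via ancestrality, the vanishing of the common-children term because a cycle in a cycle-disjoint graph is chordless, and the standard cofactor identity for the determinantal restatement.
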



Once we have found the skeleton of our root cycle, we choose any one of the two possible orientations. According to Theorem~\ref{distrequivalence}, each of those is possible in a representative of the equivalence class. We then proceed to find the edge weights on the cycle. The following lemma applies in the case of cycles of length 3 or more.

\begin{lem}\label{lem:rankAs}
  Let $u,v,w$ be three consecutive nodes in a root cycle $C$ and $\lambda_{uv}$ the corresponding edge weight in edge $u\to v$. Then, the matrices 
  \begin{align*}\label{eq:rankAs}
    A_{uv}^{(2)} &= \left(\begin{array}{ccc}
      1 & \lambda_{uv} & \lambda_{uv}^2 \\
      s_{uu} & s_{uv} & s_{vv} \\
      t_{uuu} & t_{uuv} & t_{uvv} \\
      t_{uuv} & t_{uvv} & t_{vvv} 
    \end{array}\right) \text{ and  }\\
    A_{uvw}^{(3)} &= \left(\begin{array}{ccccc}
      1 & \lambda_{uv}  & \lambda_{uv}^2 & \lambda_{uv}^2 & \lambda_{uv}^3 \\
      s_{uu} & s_{uv}   & s_{vv}  & s_{uw}  & s_{vw} \\
      t_{uuu} & t_{uuv} & t_{uvv} & t_{uuw}  & t_{uvw} \\
      t_{uuv} & t_{uvv} & t_{vvv} & t_{uvw}  & t_{vvw}
    \end{array}\right)
  \end{align*}
  have rank at most $2$ and $3$, respectively.  For generic choices of $\Lambda$, $\Omega^{(2)}$, and $\Omega^{(3)}$, the respective ranks equal~2~and~3.
\end{lem}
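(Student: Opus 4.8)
The plan is to reduce to the case where $G$ is a single directed cycle and then read the moments off a closed form for $(I-\Lambda)^{-1}$. Since $C$ is a root (ancestral) cycle, every trek whose sinks lie in $C$ has its top in $\an(C)=C$ and all of its intermediate vertices in $\an(C)=C$ as well; hence by the trek rule \rf{eq:trekrule} the moments $s_{ij}$ and $t_{ijk}$ with $i,j,k\in C$ depend only on the induced subgraph $G[C]$ (this is also what \prf{prop:regression} gives). As $G$ is cycle-disjoint, $G[C]$ is a bare directed cycle $c_1\to\cdots\to c_m\to c_1$, so I may assume $G$ equals this cycle. Writing $\rho$ for the product of all edge weights around the cycle and $p_{\ell i}$ for the weight of the directed path from $\ell$ to $i$ that traverses each edge at most once (with $p_{ii}=1$), summing the geometric series over completed loops yields $[(I-\Lambda)^{-1}]_{\ell i}=p_{\ell i}/(1-\rho)$ and $\det(I-\Lambda)=1-\rho$. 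The identities for $S$ and $T$ then become
\[ s_{ij}=\tfrac{1}{(1-\rho)^2}\sum_{\ell}p_{\ell i}p_{\ell j}\,\omega^{(2)}_\ell,\qquad t_{ijk}=\tfrac{1}{(1-\rho)^3}\sum_{\ell}p_{\ell i}p_{\ell j}p_{\ell k}\,\omega^{(3)}_\ell. \]

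Next I would exploit two path identities for consecutive nodes $u\to v\to w$: for every top $\ell\notin\{v,w\}$ one has $p_{\ell v}=\lambda_{uv}p_{\ell u}$ and $p_{\ell w}=\lambda_{uv}\lambda_{vw}p_{\ell u}$, so the triple $(p_{\ell u},p_{\ell v},p_{\ell w})$ is proportional to $f:=(1,\lambda_{uv},\lambda_{uv}\lambda_{vw})$; the only exceptional tops are $\ell=v$, with triple $f_v:=(\rho/\lambda_{uv},1,\lambda_{vw})$, and $\ell=w$, with triple $f_w:=(\rho/(\lambda_{uv}\lambda_{vw}),\rho/\lambda_{vw},1)$. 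Introduce the feature map $\phi(a,b,c)=(a^2,ab,b^2,ac,bc)$, whose five outputs match the five columns of $A_{uvw}^{(3)}$. Because the columns of rows $3$ and $4$ are obtained from the covariance columns by appending the single variable $u$, respectively $v$, each of rows $2,3,4$ equals a weighted sum $\sum_\ell(\cdot)\,\phi(p_{\ell u},p_{\ell v},p_{\ell w})$; collecting the bulk tops (all proportional to $f$) shows these three rows lie in $W:=\mathrm{span}\{\phi(f),\phi(f_v),\phi(f_w)\}$, of dimension at most $3$. The same bookkeeping on the first three columns shows every row of $A_{uv}^{(2)}$ lies in $\mathrm{span}\{(1,\lambda_{uv},\lambda_{uv}^2),(p_{vu}^2,p_{vu},1)\}$, giving $\rk A_{uv}^{(2)}\le2$ at once.

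The crux for $A_{uvw}^{(3)}$ is to show that the top row $(1,\lambda_{uv},\lambda_{uv}^2,\lambda_{uv}^2,\lambda_{uv}^3)$ also lies in $W$. I would split off the two vectors $A=(1,\lambda_{uv},\lambda_{uv}^2,0,0)$ and $B=(0,0,0,1,\lambda_{uv})$ and verify the elementary identities: the top row equals $A+\lambda_{uv}^2B$, while $\phi(f)=A+\lambda_{uv}\lambda_{vw}B$ and, using $p_{wv}=\lambda_{uv}p_{wu}$, one gets $\phi(f_w)=p_{wu}^2A+p_{wu}B$. Since $\lambda_{uv}\lambda_{vw}\,p_{wu}=\rho$, the pair $\phi(f),\phi(f_w)$ is a basis of $\mathrm{span}\{A,B\}$ whenever $\rho\ne1$, and the top row lies in this plane. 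Hence all four rows of $A_{uvw}^{(3)}$ lie in $W$, proving $\rk A_{uvw}^{(3)}\le3$.

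For the generic claims, I would note that $\phi(f_v)\notin\mathrm{span}\{A,B\}$ unless $\rho=1$ (its first three coordinates are proportional to $(1,\lambda_{uv},\lambda_{uv}^2)$ only when $\rho=1$), so $\dim W=3$ generically; since the coefficient of $\phi(f_v)$ in row $2$ is $\omega^{(2)}_v/(1-\rho)^2\ne0$ and the top row together with the bulk part of row $2$ already spans $\mathrm{span}\{A,B\}$ when $\lambda_{uv}^2\ne\lambda_{uv}\lambda_{vw}$, the four rows span $W$ and $\rk A_{uvw}^{(3)}=3$; the argument for $\rk A_{uv}^{(2)}=2$ is identical. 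Finally, rank-at-most bounds are Zariski-closed and the moments are rational in $(\Lambda,\Omega^{(2)},\Omega^{(3)})$, so the bounds, holding on a dense set, hold identically. The one genuinely non-obvious point -- and thus the expected obstacle -- is placing the top row inside $W$: the naive guess that all rows are proportional to the single bulk feature $\phi(f)$ fails because $f$ carries third coordinate $\lambda_{uv}\lambda_{vw}$ whereas the top row behaves as if it were $\lambda_{uv}^2$, and it is precisely the $A,B$ splitting that shows the two genuine cycle features $\phi(f),\phi(f_w)$ span the plane through the top row, so that the top row consumes no dimension beyond $\phi(f_v)$.
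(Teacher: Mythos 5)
Your rank--upper-bound argument is correct, and it takes a genuinely different route from the paper. The reduction to the bare cycle, the closed form $[(I-\Lambda)^{-1}]_{\ell i}=p_{\ell i}/(1-\rho)$, the feature-map bookkeeping, and the crux step (writing the top row as $A+\lambda_{uv}^2B$ and observing that $\phi(f)=A+\lambda_{uv}\lambda_{vw}B$ and $\phi(f_w)=p_{wu}^2A+p_{wu}B$ span $\mathrm{span}\{A,B\}$ exactly when $p_{wu}(1-\rho)\neq 0$) are all valid, and your Zariski-closure remark legitimately extends the bounds to degenerate parameters. The paper instead groups the trek-rule terms by \emph{column direction} rather than by top: it writes $A_{uv}^{(2)}$ as a sum of two rank-one matrices and $A_{uvw}^{(3)}$ as a sum of three rank-one matrices whose directions are precisely $A$, $B$, and $\phi(f_v)$ (its matrices $M$, $N$, $O$ in the decomposition of $A_{uvw}^{(3)}$), so the top row is absorbed into the first two summands by construction and no separate argument for it is needed. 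Your basis $\{\phi(f),\phi(f_v),\phi(f_w)\}$ and the paper's $\{A,B,\phi(f_v)\}$ span the same space generically; the price of your choice is the crux step, which you handle correctly.

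There is, however, a genuine gap in your proof of the \emph{generic equality} $\rk A_{uvw}^{(3)}=3$. The two facts you cite --- that row $2$ has nonzero $\phi(f_v)$-coefficient, and that the top row together with the bulk of row $2$ spans $\mathrm{span}\{A,B\}$ --- involve only the top row $r_1$ and the covariance row $r_2$, and two vectors can never span the three-dimensional space $W$; what these facts actually yield is $\rk A_{uvw}^{(3)}\geq 2$ generically, not $3$. (Your ``identical'' argument does work for $A_{uv}^{(2)}$, since there two independent rows suffice.) To close the gap you must produce a third row, necessarily one of the third-moment rows $r_3,r_4$, lying outside $\mathrm{span}\{r_1,r_2\}$ generically. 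This is not automatic from what you wrote, but it is easy to arrange: for instance, since $\Omega^{(3)}$ varies independently of $\Omega^{(2)}$, one may concentrate $\omega^{(2)}$ on the top $v$ and $\omega^{(3)}$ on a bulk top, making $r_2$ proportional to $\phi(f_v)$ and $r_3$ proportional to $\phi(f)$, which together with $r_1=A+\lambda_{uv}^2B$ are independent whenever $\lambda_{uv}^2\neq\lambda_{uv}\lambda_{vw}$; alternatively, do what the paper does and evaluate a specific $3\times 3$ minor at explicit parameters ($\lambda_{uv}=2$, all other cycle weights and all $\omega^{(2)}_i,\omega^{(3)}_i$ equal to $1$), checking it is nonzero. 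Since ``some $3\times 3$ minor is nonzero'' is a Zariski-open condition, either fix completes the genericity claim.
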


This  allows us to find simple linear (for cycles of length 3 or more) or  quadratic (for cycles of length 2) equations in the edge weights $\lambda_{uv}$ along the cycle.
\begin{thm}\label{thm:lambda01}
  Let $C$ be a {root} cycle in $G$. Then 
  \begin{enumerate}[label=(\roman*)]
  \item If the cycle length is at least 3, and $u,v,w\in C$ are three consecutive nodes on $C$, then there is a linear equation in $\lambda_{uv}$, 
  \begin{equation}\label{eq:linEqlambda} p(s, t)\lambda_{uv} = q(s, t),\end{equation} where 
  \begin{align*}
  p(s,t) &= 
  s_{uu}(t_{uvw}^2-t_{uuw}t_{vvw}) + s_{uw}(t_{uuu}t_{vvw}\\&-t_{uuv}t_{uvw}) + s_{vw}(t_{uuv}t_{uuw}-t_{uuu}t_{uvw}),\\
    q(s,t) &= - s_{uv}(t_{uvw}^2-t_{uuw}t_{vvw}) + s_{uw}(t_{uvv}t_{uvw}\\&-t_{uuv}t_{vvw}) + s_{vw}(t_{uuv}t_{uvw}-t_{uuw}t_{uvv}).
  \end{align*}
  \item If the cycle length is 2 with $u,v\in C$, then there are two solutions for the edge coefficient $\lambda_{uv}$ given by the  quadratic equation 
  \begin{align}
  \nonumber
  &(s_{uu}t_{uuv}-s_{uv}t_{uuu})\lambda_{uv}^2+(s_{vv}t_{uuu}\\&-s_{uu}t_{uvv})\lambda_{uv}+s_{uv}t_{uvv}-s_{vv}t_{uuv} = 0.
  \label{eq:linEqlambda2}
  \end{align} 
\end{enumerate}
\end{thm}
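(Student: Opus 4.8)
The plan is to read both equations off the rank bounds of Lemma~\ref{lem:rankAs}, which assert $\rk A_{uv}^{(2)}\le 2$ and $\rk A_{uvw}^{(3)}\le 3$. Part (ii) is then essentially immediate, whereas part (i) needs one extra observation to see that a \emph{linear} (rather than cubic) relation falls out. For part (ii), since $A_{uv}^{(2)}$ is a $4\times 3$ matrix of rank at most $2$, every $3\times 3$ minor vanishes; I would take the minor on the first three rows (the $\lambda_{uv}$-row, the $s$-row, and the first $t$-row) and expand it along the first row $(1,\lambda_{uv},\lambda_{uv}^2)$. The three $2\times 2$ cofactors are exactly the bracketed coefficients in \eqref{eq:linEqlambda2}, so this single cofactor expansion yields the quadratic, whose two roots reflect the two cycle orientations permitted by Theorem~\ref{distrequivalence}.

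For part (i), first I would identify $p$ and $q$ (up to an overall sign) with $3\times 3$ minors of the three moment rows of $A_{uvw}^{(3)}$: namely the minor on the columns indexed $(u,u),(u,w),(v,w)$ and the minor on the columns indexed $(u,v),(u,w),(v,w)$, both verified by expanding along the $s$-row. With this identification, \eqref{eq:linEqlambda} is equivalent to the determinantal identity stating that the second minor equals $\lambda_{uv}$ times the first. The heart of the argument is therefore a column relation among the three moment rows: the column indexed $(u,v)$ equals $\lambda_{uv}$ times the column indexed $(u,u)$ plus a linear combination of the columns indexed $(u,w)$ and $(v,w)$. Granting this, one replaces the $(u,v)$-column by $\lambda_{uv}$ times the $(u,u)$-column inside the $q$-determinant — the added $(u,w)$- and $(v,w)$-columns already appear there and so drop out — giving precisely the stated linear relation, with no cubic term to contend with.

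To establish the column relation I would exploit that $C$ is a \emph{root} cycle: its only ancestors are its own nodes, and in a cycle-disjoint graph $v$ has the single parent $u$, so $X_v=\lambda_{uv}X_u+\eps_v$. Hence the $(u,v)$-column minus $\lambda_{uv}$ times the $(u,u)$-column equals the vector $\big(\EE[X_u\eps_v],\,\EE[X_u^2\eps_v],\,\EE[X_uX_v\eps_v]\big)$, which is supported entirely on $\eps_v$. Writing $X=(I-\Lambda)^{-\rT}\eps$ and $m_{ai}=[(I-\Lambda)^{-1}]_{ai}$, the key fact is that the only edge into $v$ is $u\to v$, so every path ending at $v$ factors through it and $m_{av}=\lambda_{uv}m_{au}$ for all $a\ne v$ (both sides vanish for $a$ outside the root cycle). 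Splitting the $(u,w)$- and $(v,w)$-columns into their $\eps_v$-parts and remainders, one checks via this identity that both $\eps_v$-parts are parallel to the target vector while the two remainders satisfy $\mathrm{(remainder\ of\ }(v,w))=\lambda_{uv}\,\mathrm{(remainder\ of\ }(u,w))$; consequently the combination $\lambda_{uv}\cdot\mathrm{col}_{(u,w)}-\mathrm{col}_{(v,w)}$ has vanishing non-$\eps_v$ part and is a nonzero scalar multiple of the target, placing the target in the span of $\mathrm{col}_{(u,w)}$ and $\mathrm{col}_{(v,w)}$.

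The main obstacle is exactly this bookkeeping: cleanly separating each moment column into its $\eps_v$-contribution and its remainder, and invoking $m_{av}=\lambda_{uv}m_{au}$ for $a\ne v$ to show the remainders line up, for cycles of \emph{every} length $\ge 3$ rather than only triangles. This is where the root-cycle hypothesis does the real work, since it guarantees that only noises inside $C$ contribute and that $v$ has a unique in-cycle parent; everything else in the proof is cofactor expansion.
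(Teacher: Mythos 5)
Your proposal is correct, and for part (ii) it coincides with the paper's argument verbatim: the paper also expands the $3\times 3$ minor of the first three rows of $A_{uv}^{(2)}$ along the row $(1,\lambda_{uv},\lambda_{uv}^2)$ and notes (via \eqref{eq:A2}) that the leading coefficient is generically nonzero — a check you omit but which is needed to speak of ``two solutions.'' For part (i) you prove exactly the same determinantal identity as the paper, $\det(A_2\,A_4\,A_5)=\lambda_{uv}\det(A_1\,A_4\,A_5)$ with $A$ the three moment rows and the same column choices, but by a genuinely different mechanism. The paper reuses the three-term rank-one decomposition $A=M+N+O$ of \eqref{eq:decompA3}, established in the proof of Lemma~\ref{lem:rankAs} via the trek rule: multilinearity kills every determinant whose columns lie in the span of the $N$- and $O$-directions, and $M_2=\lambda_{uv}M_1$ finishes. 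You instead derive a direct column dependency, $A_2-\lambda_{uv}A_1\in\mathrm{span}(A_4,A_5)$, from the structural equation $X_v=\lambda_{uv}X_u+\eps_v$ and the walk-sum identity $m_{av}=\lambda_{uv}m_{au}$ for $a\neq v$ (valid precisely because $u$ is the unique parent of $v$ in a root cycle of a cycle-disjoint graph); concretely $A_2-\lambda_{uv}A_1=m_{vu}\,d$ and $\lambda_{uv}A_4-A_5=-m_{vw}\,d$ for $d=\bigl(\omega^{(2)}_v,\;m_{vu}\omega^{(3)}_v,\;m_{vv}\omega^{(3)}_v\bigr)^{\rT}$, using $m_{vv}-\lambda_{uv}m_{vu}=1$; I verified these relations and they hold exactly. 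Your route is more self-contained, since it bypasses \eqref{eq:decompA3} entirely, at the cost of a mild genericity step: passing from $m_{vw}\det(d\,A_4\,A_5)=0$ to $\det(d\,A_4\,A_5)=0$ requires $m_{vw}\neq 0$, which holds whenever $\lambda_{vw}\neq 0$ (or by density, since the difference of the two sides is a rational function of the parameters), whereas the paper's decomposition argument is an exact algebraic identity needing no such appeal. Finally, your ``up to an overall sign'' hedge is not merely cautious but necessary: the printed $p(s,t)$ equals $-\det(A_1\,A_4\,A_5)$ while $q(s,t)=\det(A_2\,A_4\,A_5)$, so the identity that both you and the paper's proof actually establish reads $q=-\lambda_{uv}p$ in the paper's printed coefficients; this sign slip sits in the theorem statement, not in either argument.
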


Note that in the case of cycles of length at least 3, 
 these equations do not depend on the length of the cycle since they only use the moments of three of the vertices.

\subsection{Discovering edges and edge weights between strong components}
Once we have found all strong components, the edges and edge weights within each components, and a topological ordering among the strong components, we proceed to find the edges and edge weights between different strong components. This is done via a simple regression, which uses the topological ordering, and simply regresses each cycle on all cycles before it in topological ordering. The edge weights are then equal to the regression coefficients adjusted by the cycle effect, as shown in the lemma below.

\begin{lem}\label{thm:R_DC}
  Let $D$ be a cycle in $G$ and $C\subset V$ the vertices of all cycles that appear before $D$ in topological ordering. {Then the parameters $\lambda_{cd}$ with $c\in C$ and $d\in D$ can be calculated using the regression coefficient $R_{D,C}$ and the edge weights $\Lambda_{D,D}$ within the cycle $D$ via $$ \Lambda_{C, D} = R_{DC}^T(I - \Lambda_{D, D}).$$}
\end{lem}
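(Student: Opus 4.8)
The plan is to read the cross-component coefficients directly off the structural equations restricted to the cycle $D$, and then identify the block $(I-\Lambda_{D,D}^\rT)^{-1}\Lambda_{C,D}^\rT$ with the population regression matrix $R_{D,C}$ by exploiting that the noise feeding $D$ is independent of $X_C$.

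First I would record two facts that follow purely from the topological ordering. Fix the ordering of the strong components underlying the definition of $C$, so that $C$ is the union of all components strictly preceding $D$. Since every ancestor of a vertex in $C$ lies in a component that also precedes $D$, the set $C$ is ancestral, i.e.\ $\an(C)=C$. Moreover, any edge $j\to d$ with $d\in D$ originates in a component that is an ancestor of $D$; such a component either equals $D$ or strictly precedes $D$, so $j\in C\cup D$. Hence $\pa(d)\subseteq C\cup D$ for every $d\in D$. (Cycle-disjointness enters only through the fact that each strong component is a cycle; the argument itself needs only that $C$ is an ancestral union of strong components.)

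Next, restricting the defining equations $X=\Lambda^\rT X+\varepsilon$ to the coordinates in $D$ and using $\pa(D)\subseteq C\cup D$, I would obtain $X_D=\Lambda_{C,D}^\rT X_C+\Lambda_{D,D}^\rT X_D+\varepsilon_D$, so that $(I-\Lambda_{D,D}^\rT)X_D=\Lambda_{C,D}^\rT X_C+\varepsilon_D$. The matrix $I-\Lambda_{D,D}$ is invertible: ordering $C$ before the remaining vertices makes $I-\Lambda$ block upper triangular (no edge points into the ancestral set $C$), and applying the same reasoning inside $V\setminus C$ with $D$ placed first isolates $I-\Lambda_{D,D}$ as a diagonal block of the invertible matrix $I-\Lambda$; alternatively this is already implicit in Proposition~\ref{prop:regression}. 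Solving then gives $X_D=(I-\Lambda_{D,D}^\rT)^{-1}\Lambda_{C,D}^\rT X_C+(I-\Lambda_{D,D}^\rT)^{-1}\varepsilon_D$.

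The final step recognizes the first summand as the population regression of $X_D$ on $X_C$. Because $C$ is ancestral, the block-triangular structure gives $X_C=(I-\Lambda_{C,C})^{-\rT}\varepsilon_C$, a function of $\varepsilon_C$ alone; since the components of $\varepsilon$ are independent and $C\cap D=\emptyset$, we have $\varepsilon_D\indep X_C$, and in particular $\mathrm{Cov}(\varepsilon_D,X_C)=0$. Thus in the decomposition above the residual term is mean-zero and uncorrelated with $X_C$, so the best linear predictor of $X_D$ from $X_C$ has coefficient matrix $R_{D,C}=(I-\Lambda_{D,D}^\rT)^{-1}\Lambda_{C,D}^\rT$. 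Rearranging yields $\Lambda_{C,D}^\rT=(I-\Lambda_{D,D}^\rT)R_{D,C}$, and transposing gives the claimed identity $\Lambda_{C,D}=R_{D,C}^\rT(I-\Lambda_{D,D})$. I expect the only genuine obstacle to be the bookkeeping in the first step---verifying that $C$ is ancestral and that $C\cup D$ contains all parents of $D$---together with checking that the $R_{D,C}$ in the statement is exactly the least-squares coefficient, so that the independence $\varepsilon_D\indep X_C$ pins it down; the algebra afterward is immediate.
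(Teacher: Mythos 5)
Your proof is correct, and it reaches the key identity by a genuinely different route than the paper. The paper reduces the lemma to its Lemma~\ref{lem:refCoeffs}, whose proof is an explicit block-matrix computation: it writes $\Lambda$ in block upper-triangular form, expresses the path matrix $(I-\Lambda)^{-1}$ with an unknown off-diagonal block $M_1$, computes $S_{D,C}$ and $S_{C,C}$ from the parametrization $S=(I-\Lambda)^{-\rT}\Omega^{(2)}(I-\Lambda)^{-1}$, obtains $R_{D,C}=M_1^\rT(I-\Lambda_{C,C})^\rT$, and finally identifies $M_1$ via the identity $[(I-\Lambda)(I-\Lambda)^{-1}]_{C,D}=0$. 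You instead work directly with the structural equations: restricting to $D$, solving $(I-\Lambda_{D,D}^\rT)X_D=\Lambda_{C,D}^\rT X_C+\varepsilon_D$, and invoking the orthogonality principle---since $C$ is ancestral, $X_C$ is a function of $\varepsilon_C$ alone, so $\varepsilon_D\indep X_C$ and the coefficient of the best linear predictor must be $(I-\Lambda_{D,D}^\rT)^{-1}\Lambda_{C,D}^\rT$. Both arguments need the same graphical bookkeeping (that $C$ is ancestral, that $\pa(D)\subseteq C\cup D$, and that $I-\Lambda_{D,D}$ is invertible as a diagonal block of the block-triangular invertible matrix $I-\Lambda$), and your observation that cycle-disjointness is not really needed here matches the generality of the paper's Lemma~\ref{lem:refCoeffs}. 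What your route buys is brevity and transparency: no explicit path-matrix blocks or covariance formulas are needed, only uncorrelatedness of the residual with the regressor. What the paper's route buys is reusable explicit expressions for the blocks of $(I-\Lambda)^{-1}$, $S_{D,C}$, and $S_{C,C}$, and an argument carried out purely at the level of the moment parametrization rather than the random vectors themselves.
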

\begin{proof} 
The result follows directly since $R_{D,C} = (I - \Lambda_{D, D})^{-T}(\Lambda_{C, D})^T$; see Lemma \ref{lem:refCoeffs} for more details.\hfill$\square$
\end{proof}

\subsection{Algorithm}
Combining the results from the previous subsections,  we present our full algorithm in Algorithm~\ref{discAlg}.
The implementation details can be found in the supplementary material~\ref{sec:algodetails} {and the code is available in the repository: \href{https://github.com/ysamwang/disjointCycles}{github.com/ysamwang/disjointCycles}.} 

Remark~\ref{rmk:consistency} discusses consistency of the proposed procedure when a given threshold allows one to determine whether quantities are zero or nonzero. In practice, this threshold is not known a priori, so our procedure instead uses a hypothesis test to certify whether each quantity of interest is zero or nonzero. Thus, $\alpha$, the level for each hypothesis test is a tuning parameter we must specify.
In addition, because we perform a large number of tests, we find it empirically advantageous to use either the Bonferroni-Holm~\citep{holm1979multiple} or Benjamini-Hochberg~\citep{benjamini1995fdr} procedures to control false positives at various stages of the algorithm. Additional details are given in the supplement. 

\begin{algorithm}[h]
 \caption{\label{discAlg}Causal Graph Discovery}\label{alg:disc}
 \begin{algorithmic}[1]
  \Require Random vector $X=(X_i)_{i}$ on $n$ components with moments $S=\EE[X_iX_j]$ and $T=\EE[X_iX_jX_k]$
  \Ensure Causal DG $G=(V,E)$ and edge weights $\Lambda$.
  \hrule
  \begin{flushleft}
    \it \textbf{Part 1.} Learning strong components, their order, and edges and edge weights within components
  \end{flushleft}
  \State Until the graph is nonempty, compute all $d_{uv}^{2\times 2}$ and $d_{uv}^{3\times 3}$.
  \State Set $\cC$ to be the collection of all vertices $r\in V$ such that such that $d_{ur}^{2\times 2}= 0$ for all $u\in V$, $\cC$ is the set of all root nodes (Corollary \ref{cor:d2d3}). Record $\mathcal C$.
  \State If $\cC\not=\emptyset$, regress $V\setminus\cC$ on $\cC$ to obtain samples from graph without $\mathcal C$ (Proposition~\ref{prop:regression}); go back to Step 1.
  \State Else, set $\cC$ to be the collection of maximal $C\subseteq V$ such that $d_{uv}^{3\times 3}=0$ and $d_{uv}^{2\times 2}\ne 0$ for all $u,v\in C$
  \State Prune $\cC$ to obtain $\cC'$, all root cycles (
  Lemma~\ref{lem:rootCycle}). Record $\mathcal C'$.
  \State Compute edges and edge weights along each cycle in $\mathcal C'$ (Lemma~\ref{lem:condIndep} and Theorem~\ref{thm:lambda01}).
  \State Regress $V\setminus\cC'$ on $\cC'$ to obtain samples from graph without $\mathcal C'$ (Proposition~\ref{prop:regression}); go back to Step 1.
  \hrule
  \begin{flushleft}
      \it \textbf{Part 2.} Learning edges and weights between cycles
  \end{flushleft}
  \State Working backwards in topological ordering, regress each cycle on all possible parents to find edges between cycles (Lemma~\ref{thm:R_DC}).
  
 \end{algorithmic}
\end{algorithm} 
{Based on our previous results, we obtain the following.}

\begin{cor} {Let $G$ be a cycle-disjoint graph, and let $(S,T)\in \mathcal{M}^{2,3}(G)$ be the second- and third-order moments of a random vector following the LSEM based on $G$. Assume the coefficients $\Lambda$ and the moments $\Omega^{(2)}$, $\Omega^{(3)}$ of the noise vector $\varepsilon$ are generic. Then Algorithm~\ref{discAlg} recovers a graph $G'$ that is distribution equivalent to $G$, as well as appropriate coefficients $\Lambda'$ for $G'$. Applying Theorem~\ref{distrequivalence}, all graphs distribution equivalent to $G$ may be constructed.}
\end{cor}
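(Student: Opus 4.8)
The plan is to prove correctness by induction on the iterations of the main loop in Part~1, using Proposition~\ref{prop:regression} to furnish the inductive invariant. Concretely, I would show that at the start of each iteration the current moment pair $(S,T)$ is the second- and third-order moment data of the LSEM on an induced subgraph $G[V\setminus C]$, where $C$ is the union of all strong components removed so far. Since $C$ is built up out of root nodes and root cycles, each of which is ancestral at the moment it is removed, Proposition~\ref{prop:regression} guarantees that the residual moments again lie in $\mathcal{M}^{2,3}(G[V\setminus C])$, and $G[V\setminus C]$ is again cycle-disjoint. Genericity of $(\Lambda,\Omega^{(2)},\Omega^{(3)})$ is used throughout to turn the ``identically zero'' characterizations of Theorem~\ref{d2d3} and its corollaries into exact zero/nonzero decisions for the realized parameters: each non-vanishing statement defines a nonempty Zariski-open condition, and a finite intersection of such conditions is again generic, so the algorithm's comparisons to zero reflect the true combinatorial structure.

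Granting this invariant, I would verify Part~1 step by step. Corollary~\ref{cor:d2d3} shows that Step~2 correctly identifies the current root nodes, and since a set of roots is ancestral, Step~3's regression legitimately reduces to $G$ with those roots deleted. When no root node exists, the nonempty cycle-disjoint subgraph must still possess a source strong component with no incoming edges; as it is not a single root node, it is a root cycle, so at least one exists. Corollary~\ref{cor:rootCycles} certifies that the candidate collection $\mathcal{C}$ formed in Step~4 contains every root cycle, and Lemma~\ref{lem:rootCycle} certifies that Step~5's pruning isolates exactly the root cycles. For each such cycle, Lemma~\ref{lem:condIndep} recovers its undirected skeleton and Theorem~\ref{thm:lambda01} recovers the edge weights once an orientation is fixed; here the algorithm simply \emph{chooses} one of the two cycle orientations. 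Because each iteration removes at least one vertex, the recursion terminates, and the order of removal records a topological order among the strong components. For Part~2, I would appeal directly to Lemma~\ref{thm:R_DC}: processing the cycles backward in the recorded order and correcting $R_{D,C}$ by $(I-\Lambda_{D,D})$ yields exactly the cross-component coefficients $\Lambda_{C,D}$ consistent with the orientations chosen in Part~1. Collecting the within- and between-component edges produces a directed graph $G'$ together with a coefficient matrix $\Lambda'$.

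Finally, I would assemble these pieces into the equivalence statement. Let $\pi$ be the permutation that is the identity on every cycle the algorithm left in its original orientation and the corresponding cyclic permutation on every cycle it reversed; then by construction $G'=\pi(G)$ and $\pi$ factors according to vertex-disjoint cycles in $G$. By the ``Moreover'' clause of Theorem~\ref{distrequivalence}, this $\pi$ determines a unique $\Lambda'\in\mathbb{R}^{E'}_*$ realizing the same distribution, and since the weights output by the algorithm are exact functions of the distribution's moments (Theorem~\ref{thm:lambda01} and Lemma~\ref{thm:R_DC}), they must coincide with this $\Lambda'$; hence $(G',\Lambda')$ is distribution equivalent to $(G,\Lambda)$. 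Applying Theorem~\ref{distrequivalence} once more, by enumerating all permutations factoring through the disjoint cycles of $G'$, then yields the entire equivalence class.

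I expect the main obstacle to lie in the orientation bookkeeping of this last step: one must check that the between-cycle edges recovered in Part~2 are precisely those dictated by the edge-redirection rule defining $\pi(G)$, so that the independently made per-cycle orientation choices genuinely glue into a single graph $\pi(G)$ rather than an inconsistent hybrid. The secondary subtlety is the uniform genericity argument, which must ensure that the finitely many non-vanishing conditions invoked across all levels of the recursion hold simultaneously for the fixed realized parameters.
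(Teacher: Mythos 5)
Your proposal is correct and follows exactly the route the paper intends: the paper states this corollary without an explicit proof ("Based on our previous results, we obtain the following"), and your argument is precisely the assembly of those results—Corollaries~\ref{cor:d2d3} and~\ref{cor:rootCycles}, Lemma~\ref{lem:rootCycle}, Proposition~\ref{prop:regression} as the inductive invariant, Lemma~\ref{lem:condIndep}, Theorem~\ref{thm:lambda01}, Lemma~\ref{thm:R_DC}, and finally Theorem~\ref{distrequivalence}—along the steps of Algorithm~\ref{discAlg}. Your two flagged subtleties are also resolved the way the paper implicitly resolves them: the between-cycle edges must agree with $\pi(G)$ because they are exact functions of the (orientation-consistent) moments, and the finitely many non-vanishing conditions across recursion levels are polynomial conditions in the original parameters, so genericity handles them simultaneously.
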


{Note that after having recovered $G$ and $\Lambda$, we are also able to recover $\Omega^{(2)}$ and $\Omega^{(3)}$.}

{The genericity assumption needed when applying our algorithm to a particular distribution is that the distribution behaves like a typical member of the model with regards to the tested constraints. Given the polynomial nature of the constraints, only a measure zero set of parameters lead to distributions that violate such genericity.
}

\section{Numerical experiments}\label{sec:numerical_experiments}

\paragraph{Simulation setup} We now consider the empirical performance of the proposed procedure. For each replication we construct a random graph with $|V| = p$. We create $p/3$ disjoint cycles $C_1, C_2, \ldots, C_{p/3}$ in a total ordering where $\vert C_k \vert = 3$ for all $k$. We first include an edge between 1 node in cycle $C_k$ and 1 node in cycle $C_{k+1}$ so that $\an(C_k) = \cup_{j <k} C_j$. Then for all for $j < k$, and each $u \in C_{j}$ and $v \in C_k$, we add the edge $u \rightarrow v$ with probability $1/2$. The linear coefficients corresponding to each edge in the graph are drawn uniformly from $-(.8, .5)\cup (.5, .8)$. For the errors, we consider two distributions: a mixture of normals
with weight $.9$ on $N(-2, .1^2)$ and weight $.1$ on $N(2, .1^2)$ and
a Gamma(1,1) distribution. We center the distributions, and we scale each $\varepsilon_v$ so that the standard deviation is uniformly in $(.8, 1)$. Across the various settings, we let $n = 10000, 25000, 50000, 100000$.

\paragraph{ICA based procedure} We first show that the ICA based procedure of~\citet{DBLP:conf/uai/LacerdaSRH08} suffers computationally when $p$ is not small. Specifically, when the demixing matrix is estimated imprecisely, the procedure requires a search over an intractable number of permutations. To examine this concretely, we let $p= 18, 21, 24, 27$ and record the proportion of times the ICA based procedure returns an estimate within 24 hours. With gamma errors and $n = 100000$, the ICA based procedure---as implemented in the rpy-tetrad package~\citep{ramsey2023py}---returns an estimate in $14$ out of $25$ replicates when $p = 18$ and $10$ out of $25$ replicates when $p = 21$. The performance is worse when $n$ is smaller because the demixing matrix is estimated less precisely. When $p = 24, 27$ the procedure fails to return an estimate in all $25$ replicates; additional details are given in the appendix.

\paragraph{Proposed procedure} To demonstrate that our proposed procedure can handle larger problems, we apply it to problems where $|V| = 30, 45, 60$.  In the appendix, we also show results when the graph consists of cycles of size $5$. We assess the performance of our proposed procedure through two measures. 
First, we consider the proportion of times that Part 1 of Algorithm~\ref{discAlg} correctly identifies the cycles and their ordering. This ordering is invariant across graphs in the same equivalence class. We also count the number of pairs $u, v \in V \times V$ which are decided correctly; i.e., $u \rightarrow v$ vs.~$v \rightarrow u$ vs.~no edge. There may be multiple graphs in both the equivalence class of the ground truth and the equivalence class of the estimated graph. However, for the ground truth, there is a unique graph for which all cycles are stable; i.e, the product of the edges in the graph are less than $1$ in absolute value. By construction, this is the graph we used to generate the data. From the equivalence class of the estimated graph, we pick a unique graph in the following way. 
To orient the skeleton, we estimate the edges for both orientations
and select the orientation with the smaller pathweight. When the true
and estimated graphs are in the same equivalence class, this should
(up to sampling error when estimating the edgeweights) yield the same
unique graph. However, when the true and estimated graphs are in
different equivalence classes, this may not be the pair of graphs
which maximize the number of correct pairs. These measures are shown
in Figure~\ref{fig:perf}. For each type of graph and error distribution, we draw 100 replications; for
each setting, we show the combination of $\alpha$ and multiple testing correction which yields the best average performance.

The procedure performs better when the errors are mixtures of normals, which has lighter tails, as opposed to gamma. In addition, we see that as $p$ increases performance generally decreases. Nonetheless, even when $p = 60$, nearly 80\% of edges are correctly oriented at the largest sample size when the errors are mixtures of normals. In Figure~\ref{fig:time}, we also show the computational time in seconds. We see that the computation time increases with both $n$ and $p$. However, even at the largest sample size and $p = 60$, the average time is still under 1.25 hrs per replication.

\begin{figure}[t]
\centering
\includegraphics[scale = .55]{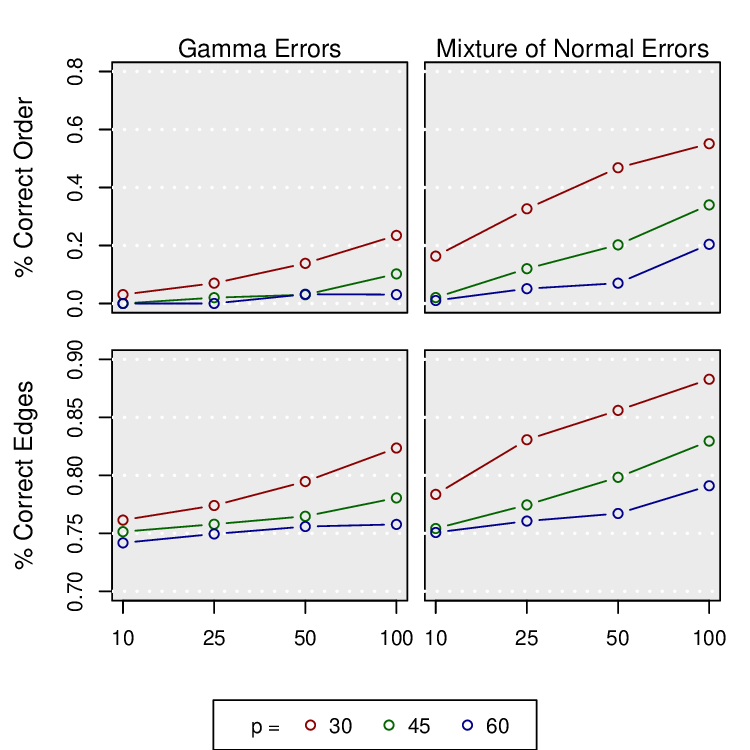}
\caption{\label{fig:perf}Horizontal axis shows sample size in thousands. Top: Proportion of times the correct ordering is recovered. Bottom: Proportion of edges that are correctly identified. Left: Errors are drawn from Gamma. Right: Errors are drawn from mixture of normals.}
\end{figure}

\begin{figure}[h]
\centering
\includegraphics[scale = .4]{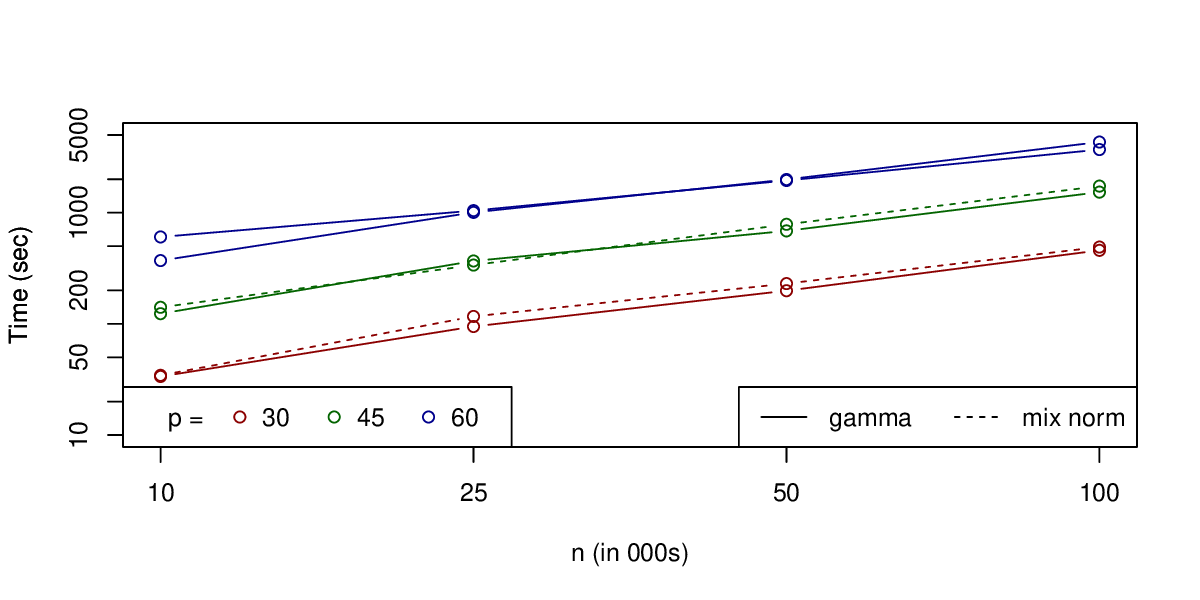}
\caption{\label{fig:time}Average time (in seconds) required per replication.}
\end{figure}

\section{Conclusion}
\label{sec:conclusion}

This paper studied causal discovery for linear non-Gaussian models that feature feedback loops.  Restricting to the setting of cycle-disjoint graphs, we derived algebraic constraints on moments that reveal the strong components of the graph, which then facilitates consistent causal discovery.

As shown in our experiments,  moment-based methods are state-of-the-art for higher-dimensional problems, in which more established ICA-based methods become prohibitively expensive in terms of computation time and memory.  From a theoretical perspective, moment-based algorithms also have the appeal of straightforward consistency guarantees.  

Our work proves that cycle-disjoint graphs can be learned computationally efficiently.  More algebraic insights are needed in order to design moment-based algorithms that could consistently learn non-cycle-disjoint cyclic graphs.



\begin{acknowledgements} 
    This project has received funding from the European Research Council (ERC) under the European Union’s Horizon 2020 research and innovation programme (grant agreement No 883818).
    %
\end{acknowledgements}

\bibliography{biblio}

\newpage

\onecolumn

\title{Causal Discovery for Linear Non-Gaussian Causal Models with Disjoint Cycles\\(Supplementary Material)}
\maketitle


\appendix

\section{Supplement to Section~\ref{sec:algorithm}}\label{sec:proofs_algorithm}

\subsection{The trek rule and trek separation}\label{sec:trekrule}
{As introduced in Section \ref{sec:preliminaries}, we use the trek rule to express the entries of $S$ and $T$ in terms of the entries of $\Lambda$, $\Omega^{(2)}$, and  $\Omega^{(3)}$:}
\begin{equation*}
        s_{ij}=\sum_{(P_i,P_j)\in\cT(i,j)}\lambda^{P_i}\lambda^{P_j}\omega^{(2)}_{\Top(P_i,P_j)}, \quad\text{and}\quad
        t_{ijk}=\sum_{(P_i,P_j,P_k)\in\cT(i,j,k)}\lambda^{P_i}\lambda^{P_j}\lambda^{P_k}\omega^{(3)}_{\Top(P_i,P_j,P_k)}, \nonumber
    \end{equation*}
    {where $\lambda^P=\lambda_{u_1u_2}\lambda_{u_2u_3}\ldots\lambda_{u_{r-1}u_r}$ for a path $P=(u_1,\ldots,u_r)$.}
We will use $\cP(i,j)$ to denote the set of $i\to j$-paths in $G$, and $\cP^\ast(i,j)\subseteq\cP(i,j)$ to denote those paths that have no repeating vertices. Moreover, the following notation \color{black}
$$ \lambda^{i\to j}=\sum_{P\in\cP(i,j)}\lambda^P \quad \text{and} \quad \lambda_\ast^{i\to j}=\sum_{P\in\cP^\ast(i,j)}\lambda^P $$
will appear in various factorizations of the $s_{ij}$ and $t_{ijk}$.

We further recall the definition of t-separation from \cite{sullivant2010trek}.

\begin{defn}
    Let $G=(V,E)$ be a directed graph and $A$, $B$, $C$, and $D$ subsets of $V$. A pair $(C,D)$  \emph{trek-separates} (or \emph{$t$-separates}) A from B if for every trek $(P_1,P_2)\in\cT(a,b)$ between any $a\in A$ and $b\in B$, either $P_1$ contains a vertex in $C$ or $P_2$ contains a vertex in $D$.
\end{defn}

\subsection{Trek-separation description of Theorem~\ref{d2d3}}

Conditions (a) and (b) of Theorem~\ref{d2d3} can be expressed using t-separation as follows.  
\begin{lem}\label{lem:trek} Let $G=(V,E)$ be a cycle-disjoint graph. Two vertices $u,v\in V$ have no common ancestor and $v$ is not an ancestor of $u$ if and only if $(\emptyset,u)$ $t$-separates $u$ from $v$. Similarly, there exists no {simple} trek $(P_u,P_v)\in\cT(u,v)$ with {non-empty sides} 
if and only if $(v,u)$ $t$-separates $u$ from $v$.
\end{lem}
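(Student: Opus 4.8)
The statement is purely combinatorial: it re-expresses the two conditions of \trf{d2d3} in the language of $t$-separation from \cite{sullivant2010trek}, so no probability is involved and the plan is to unfold the definition of $t$-separation and match it against the existence of suitable treks. The first move I would make is to record the two elementary reformulations. Since the empty set blocks no path, $(\emptyset,u)$ $t$-separates $u$ from $v$ precisely when \emph{every} trek $(P_u,P_v)\in\cT(u,v)$ has $u\in P_v$; and $(v,u)$ $t$-separates $u$ from $v$ precisely when every such trek has $v\in P_u$ or $u\in P_v$. Negating these isolates exactly the objects I must build or rule out: for the first part, a trek whose $v$-side $P_v$ avoids $u$; for the second, a trek whose sides $P_u$ and $P_v$ avoid $v$ and $u$ respectively.

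For the first equivalence I would argue by contraposition in both directions. If the graphical condition fails, then either $v$ is an ancestor of $u$ --- in which case the trek with top $\ell=v$, trivial side $(v)$ to $v$, and a directed path $v\to\cdots\to u$ as $P_u$ has $u\notin P_v$ --- or $u$ and $v$ admit a common ancestor $w$ reaching $v$ along a path that bypasses $u$, which I pair with any path $w\to\cdots\to u$; either way $(\emptyset,u)$ fails to $t$-separate. Conversely, from a trek with $u\notin P_v$ I read its top off as a common ancestor of $u$ and $v$ whose walk to $v$ avoids $u$, shortcutting that walk to a simple path, the degenerate case (top $=v$) exhibiting $v$ as an ancestor of $u$. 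Throughout, replacing walks by simple paths is harmless, and the standing cycle-disjoint hypothesis (Definition~\ref{def:cycledisjoint}) keeps this reduction transparent because every strong component is a single cycle.

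The second equivalence carries the real content. One direction is immediate: a \emph{simple} trek with non-empty sides has common top $\ell\notin\{u,v\}$ and $P_u\cap P_v=\{\ell\}$, so $v\in P_u$ would put $v$ into $P_u\cap P_v=\{\ell\}$, forcing $v=\ell$, a contradiction; hence $v\notin P_u$ and symmetrically $u\notin P_v$, so $(v,u)$ does not $t$-separate. For the reverse direction I would begin from an arbitrary trek $(P_u,P_v)$ certifying that $(v,u)$ fails to separate, i.e.\ with $v\notin P_u$ and $u\notin P_v$, and manufacture from it a simple trek with non-empty sides. Concretely, I would consider all treks consisting of a simple directed path to $u$ avoiding $v$ and a simple directed path to $v$ avoiding $u$ that share a common source, and select one of minimal total length; a minimizer must meet only at its top, for otherwise re-rooting both sides at a shared vertex below the top yields a strictly shorter such trek. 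The avoidance constraints also force the source to differ from $u$ and from $v$, so the sides are automatically non-empty.

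The main obstacle is precisely this extraction in the reverse direction of the second equivalence, and the delicacy lies entirely in the bookkeeping: I must verify that re-rooting at a shared vertex strictly decreases the total length (guaranteeing termination), that passing to subpaths preserves both the avoidance of $u$ and $v$ and the coincidence of the two sources, and that the resulting source can never coincide with $u$ or $v$. Cycle-disjointness is available and convenient for reducing walks to simple paths cleanly, but the core minimal-length re-rooting argument is otherwise standard; once it is in place, everything else is a definitional translation, and the two equivalences follow by combining the elementary reformulations of paragraph one with the constructions above.
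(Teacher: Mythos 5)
Your proof is correct and follows essentially the same route as the paper's: both reduce the two $t$-separation statements, by unfolding the definition of $t$-separation, to the existence or non-existence of a simple trek with non-empty sides (together with a $v\to u$ path for the first claim). The paper's own proof merely asserts these equivalences in two sentences, whereas you make explicit the one non-trivial ingredient it leaves implicit---the re-rooting/minimal-length argument that extracts a simple trek with non-empty sides from an arbitrary unblocked trek---so your write-up is a more detailed rendering of the same argument.
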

\begin{proof} The nodes $u$ and $v$ share no common ancestor and $v$ is not an ancestor of $u$ if and only if there exists no {simple} trek $(P_u,P_v)\in\cT(u,v)$ with {non-empty paths} $P_u$ and $P_v$ 
and there exists no $v\to u$-path. This is equivalent to $(\emptyset,u)$ $t$-separating $u$ from $v$

  Similarly, there exists no {simple} trek $(P_u,P_v)\in\cT(u,v)$ with {non-empty} $P_u$ and $P_v$ 
  if and only if the only possible treks between $u$ and $v$ are either a directed path from $u$ to $v$ or a directed path from $v$ to $u$. This is equivalent to $(v,u)$ $t$-separating $u$ from $v$.
\end{proof}

\subsection{Proof of Theorem~\ref{d2d3}}
Before proving Theorem \ref{d2d3} we need the following lemma.

\begin{lem}\label{lem:d3cycle}
 $G$ is a cycle-disjoint graph, and $(S, T)\in\mathcal M^{(2,3)}(G)$. Let $C$ be a cycle in $G$ and $u,v$ two nodes in $C$ such that $(v,u)$ $t$-separates $u$ from $v$. Then, 
$ d_{uv}^{3\times 3}=\det\begin{pmatrix}s_{uu}&s_{uv}&s_{vv}\\t_{uuu}&t_{uuv}&t_{uvv}\\t_{uuv}&t_{uvv}&t_{vvv}\end{pmatrix}=0$.
\end{lem}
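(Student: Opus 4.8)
The plan is to prove $d_{uv}^{3\times 3}=0$ by a rank argument, showing that the three rows of the displayed matrix all lie in a common two-dimensional subspace of $\RR^3$. Write $M=(I-\Lambda)^{-1}$, so that $X_i=\sum_{a\in V}M_{ai}\eps_a$, and set $p_a:=M_{au}$ and $q_a:=M_{av}$. Since the $\eps_a$ are independent, the moments factor as $s_{uu}=\sum_a\omega^{(2)}_a p_a^2$, $s_{uv}=\sum_a\omega^{(2)}_a p_aq_a$, $t_{uuv}=\sum_a\omega^{(3)}_a p_a^2q_a$, and so on. Introducing the Veronese vectors $w_a:=(p_a^2,\,p_aq_a,\,q_a^2)\in\RR^3$, the three rows of the matrix become
\begin{gather*}
(s_{uu},s_{uv},s_{vv})=\sum_a\omega^{(2)}_a\,w_a,\\
(t_{uuu},t_{uuv},t_{uvv})=\sum_a\omega^{(3)}_a\,p_a\,w_a,\qquad
(t_{uuv},t_{uvv},t_{vvv})=\sum_a\omega^{(3)}_a\,q_a\,w_a.
\end{gather*}
Thus every row lies in $\operatorname{span}\{w_a:a\in V\}$, and it suffices to prove that this span is at most two-dimensional.

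Because the $w_a$ are images of the points $(p_a,q_a)$ under the Veronese (conic) embedding $\RR^2\to\RR^3$, the span has dimension at most $2$ as soon as the projective points $[p_a:q_a]$ take at most two distinct values. To control these directions, I first use that $u$ and $v$ lie on a common cycle $C$: each reaches the other, so $u$ and $v$ have the same ancestors, and hence every $a$ with $w_a\neq 0$ is a common ancestor of $u$ and $v$.

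The heart of the argument is a dichotomy fed by the hypothesis. I claim that $(v,u)$ $t$-separating $u$ from $v$ forces, for every common ancestor $a$, that either every directed walk $a\to u$ passes through $v$, or every directed walk $a\to v$ passes through $u$. Indeed, if some walk $a\to u$ avoided $v$ and some walk $a\to v$ avoided $u$, this pair would be a trek $(P_u,P_v)\in\cT(u,v)$ with $v\notin P_u$ and $u\notin P_v$, directly contradicting the $t$-separation by $(v,u)$. Given the dichotomy, a first-passage (first-visit) decomposition of the walk generating functions factors the contributions: in the first case $(p_a,q_a)=F_{av}\,(M_{vu},M_{vv})$ and in the second $(p_a,q_a)=F_{au}\,(M_{uu},M_{uv})$, where $F_{a\bullet}$ is the sum over walks reaching the indicated node for the first time. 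Consequently every nonzero $w_a$ is a scalar multiple of one of the two fixed vectors $w^{(1)}=(M_{vu}^2,M_{vu}M_{vv},M_{vv}^2)$ or $w^{(2)}=(M_{uu}^2,M_{uu}M_{uv},M_{uv}^2)$, so at most two directions occur.

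Combining the steps, $\operatorname{span}\{w_a\}\subseteq\operatorname{span}\{w^{(1)},w^{(2)}\}$ has dimension at most $2$, the rows of the matrix are linearly dependent, and $d_{uv}^{3\times 3}=0$; since the factorizations are identities of rational functions in the entries of $\Lambda$ and the $\omega_a^{(\cdot)}$, this holds for every choice of parameters, i.e.\ identically. The step I expect to require the most care is the dichotomy together with its first-passage factorization: one must account correctly for walks that wind repeatedly around $C$ (so that $M_{vu}$, $M_{vv}$, etc.\ are the full walk sums, not only simple-path contributions) and verify that the ``avoiding'' walks produced in the contradiction are legitimate treks in the sense of \drf{multitrek}. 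The remaining linear algebra --- the Veronese rank reduction --- is then routine.
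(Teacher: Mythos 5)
Your proof is correct and takes essentially the same route as the paper's: your dichotomy derived from the $t$-separation (every ancestor $a$ has all its walks to $u$ channeled through $v$, or all its walks to $v$ channeled through $u$) is exactly the paper's disjoint partition of the ancestors into the sets $A_u$ and $A_v$, and in both arguments the factorization of the corresponding walk sums exhibits the $3\times 3$ matrix as a sum of two rank-one matrices. The remaining differences are cosmetic: you package the rank bound via Veronese vectors and use a first-passage decomposition of $(I-\Lambda)^{-1}$, whereas the paper works with the trek rule and the simple-path sums $\lambda_*^{u\to v}$, $\lambda_*^{v\to u}$ (which leans on cycle-disjointness, so your variant is marginally more self-contained).
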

\begin{proof}
    By the trek rule \eqref{eq:trekrule}, the entries of the second and third-order moments $S$ and $T$, can be written as
    \begin{align}
        &s_{ij}=a_0^{(ij)} \omega_0^{(2)}+a_1^{(ij)} \omega_1^{(2)} + \cdots + a_n^{(ij)} \omega_n^{(2)}, \label{eq:sij} \\ 
        &t_{ijk}=b_0^{(ijk)} \omega_0^{(3)}+b_1^{(ijk)} \omega_1^{(3)} + \cdots + b_n^{(ijk)} \omega_n^{(3)} \nonumber
    \end{align}
    where 
    \begin{equation} \label{eq:a_ell}
        a_\ell^{(ij)} =\sum_{\substack{ (P_1,P_2)\in\cT(i,j) \\ \Top(P_1,P_2)=\ell}}\lambda^{P_1}\lambda^{P_2} =\lambda^{\ell\to i}\lambda^{\ell\to j} \quad \text{and} \quad b_\ell^{(ijk)} =\sum_{\substack{(P_1,P_2,P_3)\in\cT(i,j,k) \\ \Top(P_1,P_2,P_3)=\ell}}\lambda^{P_1}\lambda^{P_2}\lambda^{P_3} = \lambda^{\ell\to i}\lambda^{\ell\to j}\lambda^{\ell\to k}.
    \end{equation}

    We define the following sets of nodes for any two vertices $u,v\in C$
    \begin{align}\label{eq:AuAv}
    A_u &:= \left(\an(u)\setminus \an(v)\right) \cup \{w\in \an(u) \cap \an(v)\ |\ u\in P \text{ for all } P\in\cP^\ast(w,v) \} \text{ and}\\
    A_v &:= \left(\an(v)\setminus \an(u)\right) \cup \{w\in \an(u) \cap \an(v)\ |\ v\in P \text{ for all } P\in\cP^\ast(w,u) \}. \nonumber
    \end{align}
    Since $(v,u)$ $t$-separates $u$ from $v$, 
    there exists no {simple} trek $(P_u,P_v)\in\cT(u,v)$ with 
    {non-empty} paths $P_u$ and $P_v$. Consequently, $A_u\cup A_v = \an(u) \cup \an(v)$ and $A_u\cap A_v= \emptyset$.
    Therefore,
    \begin{equation*}
    \lambda^{\ell\to u} = 
        \begin{cases}
            \lambda^{\ell\to u} & \text{if } \ell\in A_u\\
            \lambda^{\ell\to v}\lambda^{v\to u}_* & \text{if } \ell\in A_v
        \end{cases} \text{ and }
    \lambda^{\ell\to v} = 
        \begin{cases}
            \lambda^{\ell\to u}\lambda^{u\to v}_* & \text{if } \ell\in A_u\\
            \lambda^{\ell\to v} & \text{if } \ell\in A_v
        \end{cases}
    \end{equation*}
    and then,
    \begin{align*}
        s_{uu} &= \sum_{\ell\in A_u} a_{\ell}^{(uu)}\omega_\ell^{(2)} + \sum_{\ell\in A_v} a_{\ell}^{(uu)}\omega_\ell^{(2)} = \sum_{\ell\in A_u} \left(\lambda^{\ell\to u} \right)^2\omega_\ell^{(2)} + \sum_{\ell\in A_v} \left(\lambda^{\ell\to u} \right)^2\omega_\ell^{(2)} = \\
        &= \sum_{\ell\in A_u} \left(\lambda^{\ell\to u} \right)^2\omega_\ell^{(2)} + \sum_{\ell\in A_v} \left(\lambda^{\ell\to v} \right)^2\left(\lambda^{v\to u}_* \right)^2\omega_\ell^{(2)}, \\
        s_{uv} &= \sum_{\ell\in A_u} \left(\lambda^{\ell\to u} \right)^2\lambda^{u\to v}_* \omega_\ell^{(2)} + \sum_{\ell\in A_v} \left(\lambda^{\ell\to v} \right)^2\lambda^{v\to u}_* \omega_\ell^{(2)} \text{ and } \\
        s_{vv} &= \sum_{\ell\in A_u} \left(\lambda^{\ell\to u} \right)^2\left(\lambda^{u\to v}_*\right)^2\omega_\ell^{(2)} + \sum_{\ell\in A_v} \left(\lambda^{\ell\to v} \right)^2\omega_\ell^{(2)}.
    \end{align*}

    Similarly,
    \begin{align*}
        t_{uuu} &=  \sum_{\ell\in A_u} \left(\lambda^{\ell\to u} \right)^3\omega_\ell^{(3)} + \sum_{\ell\in A_v} \left(\lambda^{\ell\to v} \right)^3\left(\lambda^{v\to u}_* \right)^3 \omega_\ell^{(3)}, \\
        t_{uuv} &= \sum_{\ell\in A_u} \left(\lambda^{\ell\to u} \right)^3\lambda^{u\to v}_* \omega_\ell^{(3)} + \sum_{\ell\in A_v} \left(\lambda^{\ell\to v} \right)^3\left(\lambda^{v\to u}_*\right)^2 \omega_\ell^{(3)}, \\
        t_{uvv} &= \sum_{\ell\in A_u} \left(\lambda^{\ell\to u} \right)^3\left(\lambda^{u\to v}_*\right)^2\omega_\ell^{(3)} + \sum_{\ell\in A_v} \left(\lambda^{\ell\to v} \right)^3 \lambda^{v\to u}_* \omega_\ell^{(3)} \text{ and } \\
        t_{vvv} &= \sum_{\ell\in A_u} \left(\lambda^{\ell\to u} \right)^3\left(\lambda^{u\to v}_*\right)^3\omega_\ell^{(3)} + \sum_{\ell\in A_v} \left(\lambda^{\ell\to v} \right)^3 \omega_\ell^{(3)}.
    \end{align*}
    
    For brevity, we define  $L_u^{(2)} := \sum_{\ell\in A_u} \left(\lambda^{\ell\to u} \right)^2\omega_\ell^{(2)}$, $L_v^{(2)} := \sum_{\ell\in A_v} \left(\lambda^{\ell\to v} \right)^2\omega_\ell^{(2)}$, $L_u^{(3)} := \sum_{\ell\in A_u} \left(\lambda^{\ell\to u} \right)^3\omega_\ell^{(3)}$ and $L_v^{(3)} := \sum_{\ell\in A_v} \left(\lambda^{\ell\to v} \right)^3\omega_\ell^{(3)}$. Then the matrix
    \begin{align}\label{eq:A2}
    \begin{pmatrix*}
       s_{uu} & s_{uv} & s_{vv} \\
       t_{uuu} & t_{uuv} & t_{uvv} \\
       t_{uuv} & t_{uvv} & t_{vvv} 
     \end{pmatrix*} = 
     &\begin{pmatrix*}[r]
      L_{u}^{(2)} & \lambda_{*}^{u\to v} L_{u}^{(2)} & \left(\lambda_{*}^{u\to v}\right)^2L_{u}^{(2)} \\
      L_{u}^{(3)} & \lambda_{*}^{u\to v} L_{u}^{(3)} & \left(\lambda_{*}^{u\to v}\right)^2L_{u}^{(3)} \\
      \lambda_{*}^{u\to v}L_{u}^{(3)} & \left(\lambda_{*}^{u\to v}\right)^2 L_{u}^{(3)} & \left(\lambda_{*}^{u\to v}\right)^3L_{u}^{(3)} 
    \end{pmatrix*} + \\
    &\begin{pmatrix*}[r]
        \left(\lambda_{*}^{v\to u}\right)^2 L_{v}^{(2)} & \lambda_{*}^{v\to u} L_{v}^{(2)} & L_{v}^{(2)} \\
        \left(\lambda_{*}^{v\to u}\right)^3 L_{v}^{(2)} & \left(\lambda_{*}^{v\to u}\right)^2 L_{v}^{(2)} & \lambda_{*}^{v\to u} L_{v}^{(2)} \\
        \left(\lambda_{*}^{v\to u}\right)^2 L_{v}^{(3)} & \lambda_{*}^{v\to u} L_{v}^{(3)} & L_{v}^{(3)}
    \end{pmatrix*}
    \end{align}
 has rank at most $2$ since it is the sum of two rank $1$ matrices. Therefore $d_{uv}^{3\times 3} = 0$.

\end{proof}
\bigskip

We now proceed with the proof of Theorem \ref{d2d3}.

\proofof{Theorem \ref{d2d3}}
We begin by proving the converse of (b): assume there is no trek {simple} $(P_u,P_v)$ with {non-empty sides.} 
Without loss of generality, assume that there is a path from $u$ to $v$. If there also exists a path from $v$ to $u$ then $u$ and $v$ belong to a cycle and then $d_{uv}^{3\times 3} = 0$ by Lemma \ref{lem:d3cycle}.

Therefore, we assume there is no path from $v$ to $u$. In this case, 
\begin{equation*}
    \lambda^{\ell\to v} = 
    \begin{cases}
        \lambda^{\ell\to u}\lambda^{u\to v} & \text{if } \ell\in \an(u)\\
        \lambda^{\ell\to v} & \text{if } \ell\not\in \an(u).
    \end{cases}
\end{equation*}

Following the same notation as in \eqref{eq:sij} and \eqref{eq:a_ell} we have that
\begin{align*}
  a_\ell^{(uu)} =& \left(\lambda^{\ell\to u}\right)^2, \quad
  a_\ell^{(uv)} =  \left(\lambda^{\ell\to u}\right)^2\lambda^{u\to v} = \lambda^{u\to v}a_\ell^{(uu)} \text{ and}\\
  a_\ell^{(vv)}=& \left(\lambda^{\ell\to v}\right)^2 = 
    \begin{cases}
        \left(\lambda^{u\to v}\right)^2 a_\ell^{(uu)} & \text{if } \ell\in \an(u)\\
        \left(\lambda^{\ell\to v}\right)^2 & \text{if } \ell\not\in \an(u) .
    \end{cases} 
\end{align*}
Therefore,
$$ s_{uv} = \lambda^{u\to v}s_{uu}, \quad \text{and} \quad s_{vv} = \left(\lambda^{u\to v}\right)^2s_{uu} + \sum_{\substack{\ell\in V\\ \ell\not\in \an(u)}} \left(\lambda^{\ell\to v}\right)^2\omega_{\ell}^{(2)}.$$
Similarly, it is straightforward to check that 
$$t_{uuv} = \lambda^{u\to v}t_{uuu}, \quad t_{uvv} = \left(\lambda^{u\to v}\right)^2t_{uuu}, \quad \text{and} \quad t_{vvv} = \left(\lambda^{u\to v}\right)^3t_{uuu} + \sum_{\substack{\ell\in V\\ \ell\not\in \an(u)}} \left(\lambda^{\ell\to v}\right)^3\omega_{\ell}^{(3)}.$$
Therefore, the matrix
$$\begin{pmatrix} s_{uu}&s_{uv}&s_{vv}\\t_{uuu}&t_{uuv}&t_{uvv}\\t_{uuv}&t_{uvv}&t_{vvv}\end{pmatrix} = 
\begin{pmatrix} 
s_{uu} & \lambda^{u\to v}s_{uu} & \left(\lambda^{u\to v}\right)^2s_{uu} \\ 
t_{uuu} & \lambda^{u\to v}t_{uuu} & \left(\lambda^{u\to v}\right)^2t_{uuu} \\ 
t_{uuv} & \lambda^{u\to v}t_{uuv} & \left(\lambda^{u\to v}\right)^2t_{uuv}\end{pmatrix} +
\begin{pmatrix} 
0 & 0 & \sum_{\substack{\ell\in V\\ \ell\not\in \an(u)}} \left(\lambda^{\ell\to v}\right)^2\omega_{\ell}^{(2)} \\
0 & 0 & 0 \\
0 & 0 & \sum_{\substack{\ell\in V\\ \ell\not\in \an(u)}} \left(\lambda^{\ell\to v}\right)^3\omega_{\ell}^{(3)}
\end{pmatrix}
$$
has rank $2$ as it is the sum of two rank $1$ matrices, and consequently $d_{uv}^{3\times 3}=0.$ 

The converse of (a) follows directly. Given that there exists no $v\to u$-path and $u$ and $v$ share no common ancestor, the $2\times2$ submatrix 
$$\begin{pmatrix} s_{uu}& s_{uv}\\
t_{uuu}&t_{uuv}\end{pmatrix} = 
\begin{pmatrix} s_{uu} & \lambda^{u\to v}s_{uu} \\ t_{uuu} & \lambda^{u\to v}t_{uuu} \end{pmatrix}
$$ has rank $1$. Therefore $d_{uv}^{2\times 2}=0.$


We will now prove the direct implications, beginning with (b) and then proceeding to (a). First, assume there exists a {simple} trek $\tau=(P_1,P_2)\in\cT(u,v)$ with $\Top(\tau)=\ell$ such that $P_1$ and $P_2$ are {non-empty} 
Furthermore, assume that $P_1$ and $P_2$ are the shortest disjoint paths from $\ell$ to $u$ and $v$, respectively, containing only distinct vertices.
Let $P_1$ consist of vertices $\ell = u_0, u_1,\ldots, u_k = u$ with edges $u_i\to u_{i+1}$, $i=1,\ldots, k-1$ and $P_2$ of vertices $\ell = v_0, v_1,\ldots, v_s = v$ with edges $v_i\to v_{i+1}$, $i=1,\ldots, s-1$.
Set $\lambda_{v_0,v_1} = \lambda_{u_0,u_1}=2$, $\lambda_{u_i,u_{i+1}} = \lambda_{v_j,v_{j+1}} = 1$ for $i=1,\ldots, k-1$ and $j=1,\ldots, s-1$ and set all other $\lambda$'s to 0. Also, set $\omega^{(2)}_{u_i} = \omega^{(3)}_{u_i} = 1$ and 
$\omega^{(2)}_{v_j} = \omega^{(3)}_{v_j} = 1$ for $i=0,\ldots, k$ and $j=0,\ldots, s$.
Moreover, define $S_u^{(2)} := \sum_{i=1}^k \left(\lambda^{u_i\to u}\right)^2\omega^{(2)}_{u_i}$, $S_u^{(3)} := \sum_{i=1}^k \left(\lambda^{u_i\to u}\right)^3\omega^{(3)}_{u_i}$,  $S_v^{(2)} := \sum_{i=1}^k \left(\lambda^{v_i\to v}\right)^2\omega^{(2)}_{v_i}$ and $S_v^{(3)} := \sum_{i=1}^k \left(\lambda^{v_i\to v}\right)^3\omega^{(3)}_{v_i}$, then
\begin{align*}
    d_{uv}^{3\times 3}&=
    \det\begin{pmatrix}
        \left(\lambda^{\ell\to u}\right)^2\omega^{(2)}_{\ell} + S_u^{(2)} & \lambda^{\ell\to u}\lambda^{\ell\to v}\omega^{(2)}_{\ell} & \left(\lambda^{\ell\to v}\right)^2\omega^{(2)}_{\ell} + S_v^{(2)}\\ 
        \left(\lambda^{\ell\to u}\right)^3\omega^{(3)}_{\ell} + S_u^{(3)} & \left(\lambda^{\ell\to u}\right)^2\lambda^{\ell\to v}\omega^{(3)}_{\ell} & \lambda^{\ell\to u}\left(\lambda^{\ell\to v}\right)^2\omega^{(3)}_{\ell} \\
        \left(\lambda^{\ell\to u}\right)^2\lambda^{\ell\to v}\omega^{(3)}_{\ell} & \lambda^{\ell\to u}\left(\lambda^{\ell\to v}\right)^2\omega^{(3)}_{\ell} & \left(\lambda^{\ell\to v}\right)^3\omega^{(3)}_{\ell} + S_v^{(3)}
    \end{pmatrix} \\ &= \det\begin{pmatrix}
        2^2 + k & 2^2 & 2^2 + k\\
        2^3 + k & 2^3 & 2^3\\
        2^3 & 2^3 & 2^3 + k 
    \end{pmatrix} = 12k^2 >0.
\end{align*}

Moreover,
$$d_{uv}^{2\times 2}=
\det\begin{pmatrix}
2^2 + k & 2^2\\
2^3 + k & 2^3
\end{pmatrix}=2^2k >0.$$

Therefore, $d^{2\times 2}_{uv}$ and $d^{3\times 3}_{uv}$ are not zero for all values of $\lambda$ and $\omega$, leading to a contradiction. Thus, there exist no {simple} trek $\tau=(P_1,P_2)\in\cT(u,v)$ with $\Top(\tau)=\ell$ such that $P_1$ and $P_2$ are {non/empty}. 

Alternatively, if there is a $v\to u$ path, then $P_1$ consist of vertices $v = u_0, u_1,\ldots, u_k = u$ with edges $u_i\to u_{i+1}$, $i=1,\ldots, k-1$ and $P_2$ is just the vertex $v$. Using the same parameters and notation as above, we also have that
$$d_{uv}^{2\times 2}=
\det\begin{pmatrix}
\left(\lambda^{v\to u}\right)^2\omega^{(2)}_{v} + S_u^{(2)} & \lambda^{v\to u}\lambda^{v\to v}\omega^{(2)}_{v}\\ 
\left(\lambda^{v\to u}\right)^3\omega^{(3)}_{v} + S_u^{(3)} & \left(\lambda^{v\to u}\right)^2\lambda^{v\to v}\omega^{(3)}_{v}
\end{pmatrix} = 
\det\begin{pmatrix}
2^2 + k & 2^2\\
2^3 + k & 2^3
\end{pmatrix}=2^2k >0,$$
which concludes the proof.

\subsection{Proof of Proposition~\ref{prop:regression}}

The following lemma, necessary for proving Proposition~\ref{prop:regression}, provides a closed-form expression for regression coefficients when a subset of nodes is regressed onto an ancestral set that contains all of its ancestors.

\begin{lem}\label{lem:refCoeffs}
    Let $G=(V,E)$ be a simple cyclic graph with disjoint subsets of nodes $C, D\subseteq V$ such that $C$ is an ancestral set with $\an(D)\subseteq C\cup D$.
    Let $R_{D,C} = S_{D,C}S_{C,C}^{-1}$ be the regressing coefficients when $D$ is regressed onto $C$. Then $$R_{D,C} = (I - \Lambda_{D, D})^{-T}(\Lambda_{C, D})^T.$$
\end{lem}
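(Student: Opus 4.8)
The plan is to use the ancestrality hypotheses to put $\Lambda$ into block upper-triangular form, derive an explicit reduced-form expression for $X_D$ in terms of $X_C$ and the noise, and then read off the regression coefficient directly.

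First I would order the vertices as $C$, then $D$, then $W := V\setminus(C\cup D)$. Since $C$ is ancestral, $\pa(c)\subseteq C$ for every $c\in C$, so no edges enter $C$ from $D$ or $W$; and since $\an(D)\subseteq C\cup D$, we have $\pa(d)\subseteq C\cup D$ for every $d\in D$, so no edges enter $D$ from $W$. Hence, in this ordering the column blocks of $\Lambda$ indexed by $C$ and by $D$ have zero entries below their diagonal blocks, making $\Lambda$ block upper triangular. Consequently $I-\Lambda$ is block upper triangular with diagonal blocks $I-\Lambda_{C,C}$, $I-\Lambda_{D,D}$, and $I-\Lambda_{W,W}$, and since $I-\Lambda$ is invertible so is each diagonal block; in particular $I-\Lambda_{D,D}$ is invertible, which is what makes the claimed formula meaningful.

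Next I would expand the structural equation $X=\Lambda^T X+\varepsilon$ block by block. The $C$-block gives $X_C=(\Lambda_{C,C})^T X_C+\varepsilon_C$, so $X_C=(I-\Lambda_{C,C})^{-T}\varepsilon_C$ is a function of $\varepsilon_C$ alone. The $D$-block gives $X_D=(\Lambda_{C,D})^T X_C+(\Lambda_{D,D})^T X_D+\varepsilon_D$, and solving for $X_D$ yields the reduced form
$$ X_D=(I-\Lambda_{D,D})^{-T}(\Lambda_{C,D})^T X_C+(I-\Lambda_{D,D})^{-T}\varepsilon_D. $$
To finish, I would note that the last term is independent of $X_C$: it is a linear function of $\varepsilon_D$, while $X_C$ is a linear function of $\varepsilon_C$, and the coordinates of $\varepsilon$ are independent. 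Since $\varepsilon$ is centered, multiplying the reduced form on the right by $X_C^T$ and taking expectations annihilates the noise cross term and gives $S_{D,C}=(I-\Lambda_{D,D})^{-T}(\Lambda_{C,D})^T S_{C,C}$; right-multiplying by $S_{C,C}^{-1}$ produces $R_{D,C}=S_{D,C}S_{C,C}^{-1}=(I-\Lambda_{D,D})^{-T}(\Lambda_{C,D})^T$, as claimed.

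I do not anticipate a substantive obstacle, as the result is essentially forced once the block structure is established. The only points requiring care are the bookkeeping of the block decomposition of $\Lambda$ (in particular tracking which of the source/target indices labels the rows versus the columns of each block, and hence where the transposes appear) and the explicit verification that $I-\Lambda_{D,D}$ is invertible, so that the reduced form for $X_D$ is legitimate.
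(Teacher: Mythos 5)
Your proof is correct, and it takes a genuinely different route from the paper's. Both arguments start from the same observation: the hypotheses force $\Lambda$ to be block upper triangular with respect to the ordering $C$, $D$, $W=V\setminus(C\cup D)$, so in particular $I-\Lambda_{D,D}$ is invertible. From there the paper stays entirely in matrix algebra: it writes out the block structure of the path matrix $(I-\Lambda)^{-1}$, computes $S_{D,C}$ and $S_{C,C}$ from the parametrization $S=(I-\Lambda)^{-\rT}\Omega^{(2)}(I-\Lambda)^{-1}$, obtains $R_{D,C}=M_1^\rT(I-\Lambda_{C,C})^\rT$ where $M_1$ denotes the $(C,D)$ block of the path matrix, and finally identifies $M_1$ through the identity $0=[(I-\Lambda)(I-\Lambda)^{-1}]_{C,D}$. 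You instead argue probabilistically: you solve the structural equations block by block to get the reduced form
\begin{equation*}
X_D=(I-\Lambda_{D,D})^{-\rT}(\Lambda_{C,D})^\rT X_C+(I-\Lambda_{D,D})^{-\rT}\varepsilon_D,
\end{equation*}
observe that $\varepsilon_D$ is independent of $X_C$ (since $X_C$ is a function of $\varepsilon_C$ alone and the noise components are independent), and read off $S_{D,C}=(I-\Lambda_{D,D})^{-\rT}(\Lambda_{C,D})^\rT S_{C,C}$ by right-multiplying by $X_C^\rT$ and taking expectations. Your route is more elementary---it avoids computing any blocks of the inverse path matrix and dispenses with the auxiliary identity for $M_1$---and it makes the statistical content transparent: the noise term in the reduced form is exactly the part of $X_D$ uncorrelated with the regressor. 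What the paper's computation buys is explicit block expressions for $(I-\Lambda)^{-1}$ and the second moments, which are reused in the surrounding development (e.g., in the proof of Proposition~\ref{prop:regression}); for this lemma in isolation, both arguments are complete and rigorous.
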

\begin{proof}

Since $C$ is ancestral, $C$ and $D$ are disjoints and $\an(D)\subseteq C\cup D$, there are no edges from $D$ to $C$ nor from $V\setminus(C\cup D)$ to $C$ or $D$. Therefore, the matrix $\Lambda$ can be written in block form as
    $$\Lambda = 
    \begin{pmatrix}
    \Lambda_{C,C} & \Lambda_{C, D} & \Lambda_{C, V\setminus (C\cup D)}\\
    0 & \Lambda_{D,D} & \Lambda_{D, V\setminus (C\cup D)} \\
    0 & 0 & \Lambda_{V\setminus (C\cup D), V\setminus (C\cup D)}
    \end{pmatrix},$$ where $V\setminus (C\cup D)$ can be empty.
    Hence, the corresponding path matrix $(I-\Lambda)^{-1}$ has the form
    $$(I-\Lambda)^{-1}=\begin{pmatrix}
    (I-\Lambda_{C,C})^{-1} & M_1 & M_2\\
    0 & (I - \Lambda_{D,D})^{-1} & M_3 \\
    0 & 0 & (I - \Lambda_{V\setminus (C\cup D), V\setminus (C\cup D)})^{-1} 
    \end{pmatrix}.$$
   The regressing coefficients when $D$ is regressed onto $C$ are defined as $R_{D,C} = S_{D,C}S_{C,C}^{-1}$, where 
   \begin{align*}
       S_{D,C} &= \left((I-\Lambda)^{-T}\right)_{D, V}\Omega^{(2)}\left((I-\Lambda)^{-1}\right)_{V,C}= \begin{pmatrix} M_1^T & (I-\Lambda_{D,D})^{-T} & 0 \end{pmatrix}\Omega^{(2)} \begin{pmatrix} (I-\Lambda_{C,C})^{-1}\\ 0 \\ 0\end{pmatrix} \\ 
       &=  M_1^T \Omega^{(2)}_{C,C} (I-\Lambda_{C,C})^{-1},
   \end{align*}
    and 
    \begin{align*}
        S_{C,C} &= \left((I-\Lambda)^{-T}\right)_{C, V}\Omega^{(2)}\left((I-\Lambda)^{-1}\right)_{V,C}= \begin{pmatrix} (I-\Lambda_{C,C})^{-T} & 0 & 0 \end{pmatrix}\Omega^{(2)} \begin{pmatrix} (I-\Lambda_{C,C})^{-1}\\ 0 \\ 0\end{pmatrix}\\ 
       &=  (I-\Lambda_{C,C})^{-T}\Omega^{(2)}_{C,C} (I-\Lambda_{C,C})^{-1}.
   \end{align*}
   Therefore, we obtain that
   \begin{align*}
      R_{D,C} &=  M_1^T (I-\Lambda_{C,C})^T.
   \end{align*}
   Our claim that $R_{D,C}=(I - \Lambda_{D,D})^{-T}(\Lambda_{C, D})^T$  now follows because
$$M_1^T = (I - \Lambda_{D, D})^{-T}(\Lambda_{C, D})^T(I-\Lambda_{C, C})^{-T},$$
    which can be deduced from
    \begin{align*}
    0=
    \left[(I-\Lambda)(I-\Lambda)^{-1}\right]_{C,D}
    =(I-\Lambda_{C,C})M_1 - \Lambda_{C, D}(I - \Lambda_{D, D})^{-1}.
    \end{align*} 
\end{proof}

\proofof{Proposition~\ref{prop:regression}}
Observe that Lemma \ref{lem:refCoeffs} applies here, thus, $R_{V\setminus C,C}=(I - \Lambda_{V\setminus C, V\setminus C})^{-T}(\Lambda_{C, V\setminus C})^T $.  Therefore, 
\begin{align*}
  X_{V\setminus C.C} &= X_{V\setminus C} - R_{V\setminus C, C}X_C = X_{V\setminus C} - (I - \Lambda_{V\setminus C, V\setminus C})^{-T}(\Lambda_{C, V\setminus C})^T X_C.
\end{align*}
Since there are no edges from $V\setminus C$ to $C$, 
$$\begin{pmatrix}\varepsilon_C\\\varepsilon_{V\setminus C}\end{pmatrix} = (I - \Lambda^T)X = \begin{pmatrix}I - \Lambda_{C,C}^T& 0\\
-\Lambda_{C, V\setminus C}^T & I - \Lambda_{V\setminus C, V\setminus C}
^T\end{pmatrix}\begin{pmatrix}X_C\\X_{V\setminus C}
\end{pmatrix} $$
$$= \begin{pmatrix}(I - \Lambda_{C,C}^T)X_C\\ -\Lambda_{C,V\setminus C}^TX_C + X_{V\setminus C} - \Lambda_{V\setminus C, V\setminus C}^TX_{V\setminus C}
\end{pmatrix}.
$$
Hence,
\begin{align*}
    X_{V\setminus C.C} & = X_{V\setminus C} - (I - \Lambda_{V\setminus C, V\setminus C})^{-T}(\Lambda_{C, V\setminus C})^T X_C \\
    &= (I - \Lambda_{V\setminus C, V\setminus C})^{-T}\varepsilon_{V\setminus C}.
\end{align*}

\subsection{Proof of Lemma~\ref{lem:rootCycle}}
\begin{proof}
Let $\cC$ be the set containing all root cycles in $G$ constructed as described in Corollary \ref{cor:rootCycles}. Let $C,D\in\cC$ be two distinct sets. 
If $C$ is a root cycle, then it is an ancestral set, and thus $X_C=(I-\Lambda_{C,C})^{-T}\varepsilon_C$.
By Proposition \ref{prop:regression}, we have that $X_{D.C} = (X_{V\setminus C.C})_D = ((I - \Lambda_{V\setminus C, V\setminus C})^{-T}\varepsilon_{V\setminus C})_D$ depends only on $\varepsilon_{V\setminus C}$. This implies that $X_C$ and $X_{D.C}$ are independent.
Then, for all $c\in C$ and $d\in D$, we have that
$$
0 = \EE[X_c^2(X_{D.C})_d]=\EE[X_c^2(X_d-R_{d,C}X_C)].
$$


Conversely, suppose $C$ is not a root cycle. Then there must be a cycle $D\in\cC$ such that there is a directed path from a node $d\in D$ to a node $c\in C$. Take the shortest possible path from $d$ to $c$ so that no interior node of the path is in $C$.

Specialize all weights $\lambda_{uv}$ to zero except for those on the path.  
Then $(R_{D,C})_{dc}=\frac{s_{dc}}{s_{cc}}$ and
$$\EE[X_c^2(X_{D.C})_d]=\EE[X_c^2(X_d-R_{d,c}X_c)] = \EE[X_c^2X_d] - R_{d,C} \EE[X_c^3] =t_{ccd}-\frac{s_{dc}}{s_{cc}}t_{ccc}=0$$ if and only if $ s_{cc}t_{ccd}-s_{dc}t_{ccc} = 0$ or equivalently, $d^{2\times 2}_{cd}=0$.  However, by Theorem \ref{d2d3}(a), $d^{2\times 2}_{cd}$ is not identically zero if $d$ is an ancestor of $c$.
\end{proof}

\subsection{Proof of Lemma~\ref{lem:rankAs}}
\begin{proof}
    Let $C$ be a root cycle and let $u,\ v,\ w\in C$ be three consecutive nodes. Since $C$ is a root cycle and there is an edge $u\to v$, the sets $A_u$ and $A_v$ defined in \eqref{eq:AuAv} are $A_u=C\setminus v$ and $A_v=\{v\}$. Therefore, following a similar construction as in Lemma \ref{lem:d3cycle} we obtain
    \begin{align*}
    A_{uv}^{(2)} =
    \begin{pmatrix*}
       1 & \lambda_{uv} & \lambda_{uv}^2 \\
       s_{uu} & s_{uv} & s_{vv} \\
       t_{uuu} & t_{uuv} & t_{uvv} \\
       t_{uuv} & t_{uvv} & t_{vvv} 
     \end{pmatrix*} = 
     &\begin{pmatrix*}[r]
     1 & \lambda_{uv} & \lambda_{uv}^2 \\
      L_{u}^{(2)} & \lambda_{uv} L_{u}^{(2)} &\lambda_{uv}^2L_{u}^{(2)} \\
      L_{u}^{(3)} & \lambda_{uv} L_{u}^{(3)} & \lambda_{uv}^2L_{u}^{(3)} \\
      \lambda_{uv}L_{u}^{(3)} & \lambda_{uv}^2 L_{u}^{(3)} & \lambda_{uv}^3L_{u}^{(3)} 
    \end{pmatrix*} + \\
    &\begin{pmatrix*}[r]
    0 & 0 & 0 \\
        \left(\lambda_{*}^{v\to u}\right)^2 L_{v}^{(2)} & \lambda_{*}^{v\to u} L_{v}^{(2)} & L_{v}^{(2)} \\
        \left(\lambda_{*}^{v\to u}\right)^3 L_{v}^{(2)} & \left(\lambda_{*}^{v\to u}\right)^2 L_{v}^{(2)} & \lambda_{*}^{v\to u} L_{v}^{(2)} \\
        \left(\lambda_{*}^{v\to u}\right)^2 L_{v}^{(3)} & \lambda_{*}^{v\to u} L_{v}^{(3)} & L_{v}^{(3)}
    \end{pmatrix*}
    \end{align*}
    where $L_u^{(2)} := \sum_{\ell\in A_u} \left(\lambda^{\ell\to u} \right)^2\omega_\ell^{(2)}$, $L_v^{(2)} := \sum_{\ell\in A_v} \left(\lambda^{\ell\to v} \right)^2\omega_\ell^{(2)}$, $L_u^{(3)} := \sum_{\ell\in A_u} \left(\lambda^{\ell\to u} \right)^3\omega_\ell^{(3)}$ and $L_v^{(3)} := \sum_{\ell\in A_v} \left(\lambda^{\ell\to v} \right)^3\omega_\ell^{(3)}$.
    Consequently, $A_{uv}^{(2)}$ has rank at most $2$ since it is the sum of two rank $1$ matrices.

Moreover, since $\lambda^{v\to w} = \lambda^{v\to v}\lambda_{vw}$ we have
\begin{align*}
    s_{uw} &=  
    \sum_{\ell \in A_u} \lambda^{\ell\to u}\lambda^{\ell\to w}\omega_\ell^{(2)} + \lambda^{v\to u}\lambda^{v\to w}\omega_v^{(2)} 
    = L_{uw\setminus v}^{(2)} + \lambda_*^{v\to u}\lambda_{vw}L_{v}^{(2)} \\
    s_{vw} &= 
    \sum_{\ell \in A_u} (\lambda^{\ell\to u}\lambda_{uv})\lambda^{\ell\to w}\omega_\ell^{(2)} + \lambda^{v\to v}\lambda^{v\to w}\omega_v^{(2)} = 
    \lambda_{uv} L_{uw\setminus v}^{(2)} + \lambda_{vw} L_{v}^{(2)}
\end{align*}
where $L_{uw}^{(2)} := \sum_{\ell\in A_u} \lambda^{\ell\to u}\lambda^{\ell\to w}\omega_\ell^{(2)}$. Defining $L_{uuw}^{(3)} := \sum_{\ell\in A_u} \left(\lambda^{\ell\to u}\right)^2\lambda^{\ell\to w}\omega_\ell^{(2)}$ we also have that
\begin{align*}
     t_{uuw} &= \sum_{\ell \in A_u} \left(\lambda^{\ell\to u}\right)^2\lambda^{\ell\to w}\omega_\ell^{(3)} + \left(\lambda^{v\to u}\right)^2\lambda^{v\to w}\omega_v^{(3)} 
    = L_{uuw}^{(3)} + (\lambda_*^{v\to u})^2\lambda_{vw}L_{v}^{(3)}\\
     t_{uvw} &= \sum_{\ell \in A_u} \lambda^{\ell\to u}\lambda^{\ell\to v}\lambda^{\ell\to w}\omega_\ell^{(3)} + \lambda^{v\to u}\lambda^{v\to v}\lambda^{v\to w}\omega_v^{(3)} 
    = \lambda_{uv}L_{uuw}^{(3)} + \lambda_*^{v\to u}\lambda_{vw}L_{v}^{(3)}\\
     t_{vvw} &= \sum_{\ell \in A_u} \left(\lambda^{\ell\to v}\right)^2\lambda^{\ell\to w}\omega_\ell^{(3)} + \left(\lambda^{v\to v}\right)^2\lambda^{v\to w}\omega_v^{(3)} 
    = \lambda_{uv}^2L_{uuw}^{(3)} + \lambda_{vw}L_{v}^{(3)}.
\end{align*}

 Therefore,
    \begin{align}
    \label{eq:decompA3}
    A_{uv}^{(3)} = &\begin{pmatrix*}[r]
      1 \quad & \lambda_{uv} \quad& \lambda_{uv}^2 \quad & 0 & 0 \\
      L_{u}^{(2)} & \lambda_{uv}L_{u}^{(2)} & \lambda_{uv}^2L_{u}^{(2)} & 0 & 0\\
      L_{u}^{(3)} & \lambda_{uv}L_{u}^{(3)} & \lambda_{uv}^2L_{u}^{(3)} & 0 & 0\\
      \lambda_{uv}L_{u}^{(3)} & \lambda_{uv}^2L_{u}^{(3)} & \lambda_{uv}^3L_{u}^{(3)} & 0 & 0
    \end{pmatrix*} + \begin{pmatrix*}[r]
    0 & 0 & 0 & \lambda_{uv}^2 \quad & \lambda_{uv}^3 \quad \\
    0 & 0 & 0 & L_{uw}^{(2)} & \lambda_{uv}L_{uw}^{(2)}\\
    0 & 0 & 0 & L_{uuw}^{(3)} & \lambda_{uv}L_{uuw}^{(3)}\\
    0 & 0 & 0 & \lambda_{uv}L_{uuw}^{(3)}& \lambda_{uv}^2L_{uuw}^{(3)}\\
    \end{pmatrix*} +  \\
     &\begin{pmatrix*}[r]
      0 \qquad\qquad & 0 \qquad\qquad & 0 \qquad\qquad & 0 \qquad\qquad & 0 \qquad\qquad \\
      (\lambda_*^{v\to u})^2L_{v}^{(2)} & \lambda_*^{v\to u}L_{v}^{(2)} & L_{v}^{(2)} & \lambda_*^{v\to u}\lambda_{vw}L_{v}^{(2)}  &  \lambda_{vw} L_{v}^{(2)}\\
      (\lambda_*^{v\to u})^3L_{v}^{(3)} & (\lambda_*^{v\to u})^2L_{v}^{(3)} & \lambda_*^{v\to u}L_{v}^{(3)}  & (\lambda_*^{v\to u})^2\lambda_{vw}L_{v}^{(3)}  & \lambda_*^{v\to u}\lambda_{vw}L_{v}^{(3)} \\
      (\lambda_*^{v\to u})^2L_{v}^{(3)} & \lambda_*^{v\to u}L_{v}^{(3)} & L_{v}^{(3)}  & \lambda_*^{v\to u}\lambda_{vw}L_{v}^{(3)} & \lambda_{vw}L_{v}^{(3)}
    \end{pmatrix*}.\nonumber
    \end{align}
  Consequently, $A_{uvw}^{(3)}$ has rank at most $3$ since it is the sum of three rank $1$ matrices.

Finally, the matrices $A_{uv}^{(2)}$ and $A_{uvw}^{(3)}$ have ranks $2$ and $3$ respectively, for generic choices of the parameters, provided their determinants are not identically zero. To establish this, it suffices to demonstrate a single set of parameters for which their determinants are nonzero.
 Take $\lambda_{uv} = 2$ and $\lambda_{ij} = 1$ for all other edge $i\to j$ in $C$. Also, set $\omega_i^{(2)} = \omega_i^{(3)} = 1$ for all $i\in C$. Straightforward computations show that, for this choice of parameters, we have
    $$L_{u}^{(2)} = k-1, \quad L_{u}^{(2)} = -k+1, \quad L_{uw}^{(2)} = 2k-3, \quad L_{uuw}^{(2)} = -2k+3,  $$
    $$ L_v^{(2)} = 1, \quad L_v^{(3)} = -1, \quad \frac{\lambda_*^{v\to v}}{\lambda_{uv}} = 1.$$

 Then, the matrix
 $$A_{uv}^{(2)} = \begin{pmatrix*}
    1 & 2 & 4 \\
    k & 2k-1 & 4k-3 \\
    -k & -2k+1 & -4k+3 \\
    -2k+1 & -4k+3 & -8k+7
 \end{pmatrix*} 
 $$ has rank $2$ since all its $3\times 3$ minors vanish and $\det  \begin{pmatrix*} 1 & 2 \\ k & 2k-1 \end{pmatrix*} =-1 \neq 0.$ Similarly,
  $$A_{uv}^{(2)} = \begin{pmatrix*}
    1 & 2 & 4 & 4 & 8\\
    k & 2k-1 & 4k-3 & 2k-2& 4k-5\\
    -k & -2k+1 & -4k+3 & -2k+2 & -4k+5 \\
    -2k+1 & -4k+3 & -8k+7 & -4k+5& -8k+11
 \end{pmatrix*} $$ has rank $3$ since all its $4\times 4$ minors vanish and $\det  \begin{pmatrix*} 1 & 2 & 4\\ k & 2k-1 & 2k-2 \\ -2k+1 &-4k+3 &-4k+5 \end{pmatrix*} =-2k+1 \neq 0 \text{ for } k\geq 1.$
  
\end{proof}

\subsection{Proof of Theorem~\ref{thm:lambda01}}

\begin{proof}
The proof of $(i)$ follows from the decomposition of $A_{uv}^{(3)}$ given in \eqref{eq:decompA3}. 

Consider the submatrix $A$ of $A_{uv}^{(3)}$ formed by taking the last three rows, that is \begin{equation*}
    A := \left(\begin{array}{ccccc}
      s_{uu} & s_{uv}   & s_{vv} & s_{uw}  & s_{vw} \\
      t_{uuu} & t_{uuv} & t_{uvv}& t_{uuw}  & t_{uvw} \\
      t_{uuv} & t_{uvv} & t_{vvv}& t_{uvw}  & t_{vvw}
    \end{array}\right).
  \end{equation*}
  Let $A = M+N+O$  be the decomposition of $A$ as in\eqref{eq:decompA3}. Thus, $M$, $N$ and $O$ are rank-1 matrices formed by the last three rows of the corresponding matrices in \eqref{eq:decompA3}, taken in the same order. Denote by $A_i$ the $i$-th column of $A$ (and similarly for $M$, $N$ and $O$). Define $q(s,t) := \det(A_2\ A_4 \ A_5)$ and $p(s,t) := \det(A_1\ A_4 \ A_5)$. Then
  \begin{align}
       q(s,t) =& \det(A_2\ A_4 \ A_5) = \det(M_2+O_2 \quad N_4+O_4 \quad N_5+O_5)\nonumber \\
       =& \det(M_2 \quad N_4+O_4 \quad N_5+O_5) + \det(O_2 \quad N_4+O_4 \quad N_5+O_5)=\det(M_2 \quad N_4+O_4 \quad N_5+O_5),\label{eq:multDet}
  \end{align}
  where the second to last equality follows from the multilinearity of the determinant with respect to its columns, and the last equality follows because $\det(O_2 \quad N_4+O_4 \quad N_5+O_5)$ is zero as it is the determinant of the sum of two rank-$1$ matrices. 

Similarly,
$$p(s,t) = \det(A_1\ A_4 \ A_5) = \det(M_1 \quad N_4+O_4 \quad N_5+O_5) + \det(O_1 \quad N_4+O_4 \quad N_5+O_5) = \det(M_1 \quad N_4+O_4 \quad N_5+O_5)$$
since $\det(O_1 \quad N_4+O_4 \quad N_5+O_5)$ is zero as it also is the determinant of the sum of two rank-$1$ matrices.

Finally, since $M_2 = \lambda_{uv}M_1$ we have $$ q(s,t) = \det(M_2 \quad N_4+O_4 \quad N_5+O_5) = \lambda_{uv}\det(M_1 \quad N_4+O_4 \quad N_5+O_5) = \lambda_{uv}p(s,t).$$

  Straightforward computations show that the polynomials $q(s,t) = \det(A_2\ A_4 \ A_5)$ and $p(s,t) = \det(A_1\ A_4 \ A_5)$ correspond to the polynomials in the statement of Theorem Theorem~\ref{thm:lambda01}.

To prove $(ii)$, assume $C$ has only $2$ nodes $u, v$. The quadratic equation \eqref{eq:linEqlambda2} on $\lambda_{uv}$  follows from the fact that the $3\time 3$ minors of  $A_{uv}^{(2)}$ vanish. Specifically,  taking the determinant of the first three rows of $A_{uv}^{(2)}$ yields
  $$(s_{uu}t_{uuv}-s_{uv}t_{uuu})\lambda_{uv}^2+(s_{vv}t_{uuu}-s_{uu}t_{uvv})\lambda_{uv}+s_{uv}t_{uvv}-s_{vv}t_{uuv}.$$
  Furthermore, the coefficient of $\lambda_{uv}^2$, $s_{uu}t_{uuv}-s_{uv}t_{uuu}$, is nonzero (see Equation \eqref{eq:A2}). 
\end{proof}

\subsection{Causal Graph Discovery Algorithm}
\subsubsection{Implementation details}\label{sec:algodetails}

We now give details of the implementation of Algorithm~\ref{alg:disc}.

\begin{itemize}
    \item Line 2: To certify if $r$ is a root we test $H_0: d_{ur}^{2x2} = 0 \quad \forall u \neq r$. Specifically, for each $u \neq r$ we calculate a p-value for $H_0: d_{ur}^{2x2} = 0$ which is calibrated using asymptotic distribution of $\hat d_{ur}^{2x2}$ derived via the delta method. We then control for multiple testing across the $p(p-1)$ individual tests.

    \item Line 4: To find all maximal sets $C$ such that $d_{uv}^{3\times 3} = 0, d_{uv}^{2x2} \neq 0,  d_{vu}^{2x2} \neq 0)$ for all $u, v \in C$ we do the following. To test if $d_{uv}^{3\times 3} = 0$ we calculate a p-value for $H_0: d_{uv}^{3 \times 3} = 0$ which is calibrated using the asymptotic normal distribution of $\hat d_{ur}^{3 \times 3}$ using the delta method. We apply a multiple testing correction to these $p(p-1)/2$ tests.  To test if both $d_{uv}^{2x2}$ and  $d_{vu}^{2x2}$ we take the maximum previously calculated p-value (after adjusting for multiple testing) of the two tests $H_0: d_{vu}^{2x2} = 0$ and  $H_0: d_{uv}^{2x2} = 0$. We construct an undirected graph with an edge between each pair in the set $(u,v: d_{uv}^{3\times 3} = 0, d_{uv}^{2x2} \neq 0,  d_{vu}^{2x2} \neq 0)$. We then let $\mathcal{C}$ be the set of maximal cliques in that graph. 
    
    \item In practice, the set \[C^\star = (\{u,v\}: d_{uv}^{3\times 3} = 0, d_{uv}^{2x2} \neq 0,  d_{vu}^{2x2} \neq 0)\] may be empty given a fixed hypothesis testing level. When this is the case, we ensure that $C^\star$ is non-empty with the following two steps. First, if the set $C^2 = (u,v: d_{uv}^{2x2} \neq 0,  d_{vu}^{2x2} \neq 0)$ is empty for our initial hypothesis testing level, we set it to 
    \[C^2 = (u,v) = \arg\min_{v',v''} \max(\pi^2_{v',v''}, \pi^2_{v'',v'}) \]
    where $\pi^2_{v',v''}$ is the p-value when testing if $d_{v',v''}^{2 \times 2} = 0$. Once $C^2$ is non-empty, we then set 
    \[C^3 = (\{u,v\}: d_{uv}^{3\times 3} = 0). \]
    If $C^2 \cap C^3 = \emptyset$, we then lower the hypothesis testing level until it is non empty; i.e., 
     \[C^3 = (u,v) = \arg\max_{(v',v'') \in C^2} \pi^3_{v',v''} \]
     where $\pi^3_{v'',v'}$ is the p-value for testing if $d_{v', v''}^{3 \times 3} = 0$. This ensures that $C^\star = C^2 \cap C^3$ is non-empty.
    \item Line 5: To prune $\mathcal{C}$ to obtain $\mathcal{C}'$ all root cycles we do the following. For each maximal clique $C_1 \in \mathcal{C}$ and let $C_2 = (v : v \in C \text{ for some } C \in  \mathcal{C}) \setminus C_1$. Letting $X_{C_2.C_1}$ be the residuals when regressing $X_{C_2}$ onto $X_{C_1}$, we then test if $E(X_{C_2.C_1} X_{C_1}^2)$ is the zero matrix using empirical likelihood. 
    We then apply a multiple testing procedure to each of the $|\mathcal{C}|$ tests. In practice, all $C \in \mathcal{C}$ may be rejected, in which case we simply set the root cycle to be the union of all maximal cliques. Alternatively, we could pick the maximal clique with the largest p-value, but that ends up doing worse empirically. If, multiple maximal cliques in $C \in \mathcal{C}$ are not rejected which happen to have non-empty intersection, we combine them into a larger single cycle. 

    \item The output of Part 1 is a topological layer where each layer is either a set of singleton nodes or a set of cycles. For each layer with cycles, we construct an undirected cycle for the set $D$ using the greedy approach described in Algorithm~\ref{alg:undirectedCycle}.
\begin{algorithm}
 \caption{\label{alg:undirectedCycle}Find Cycle}
 \begin{algorithmic}[1]
  \Require $\hat S$: the sample correlation of nodes in a $p$-cycle 
  \State Start with an empty undirected graph $G = \{V = [p], E = \emptyset\}$
    \For{$k = 1\ldots p-1$}                    
  \State $E_{\text{valid}} = \{ (u,v): u \text{ and } v \text{ are not connected in G and have degree less than 2}\}$
  \State Add the edge $\arg \max_{(u,v) \in E_{valid}} \vert (\hat S)^{-1}_{u,v}\vert $ to $E$
\EndFor
  \State Add the edge $(u,v)$ to $G$ where $u$ and $v$ are the only remaining nodes with degree $1$
  \State Estimate the path weights when orienting the cycle in both directions
  \State Select the orientation with the smaller absolute value of the product of edgeweights
 \end{algorithmic}
\end{algorithm} 
    \item Once we have established the undirected cycle, we have two sets of edgeweights for each edge in the cycle (via Theorem 4.13) corresponding to the two possible orientations of the cycle. We estimate both directions, then pick the cycle orientation which has the smaller product of the edge weights.
    \item Line 8: Once we have oriented the cycle and estimated the edgeweights of the cycle $\hat \Lambda_{D,D}$, we select parents for each node in the cycle through the following procedure:
    \begin{itemize}
        \item Let $C$ denote all nodes which are in a layer preceeding $D$. We estimate the edgeweights from $C \rightarrow D$ using \[\hat \Lambda_{C,D} = \hat R_{CD}(I - \hat \Lambda_{D, D})\] where $\hat R_{CD}$ is the matrix of least squares coefficients when regressing $D$ onto $C$. 
        \item Then for each $d \in D$ and $c \in C$, we test the existence of edge $c \rightarrow d$ by the following. Let $d' \in D$ be the parent of $d$ in the cycle. Then, we estimate the residuals of $Y_d$ when setting the coefficient $\lambda_{cd} = 0$ by the following:
        \[\hat \epsilon_{d, -c} = Y_d - \sum_{c' \neq c}\hat \lambda_{cd}Y_{c'} - \hat \lambda_{d',d}Y_{d'}. \] We see empirically that failing to account for the estimation of the nuissance parameters $\hat \lambda_{C\setminus c, d}$ and $\lambda_{d',d}$ and naively testing $E(\hat \epsilon_{d, -c} Y_c^2) = 0$ using empirical likelihood does not maintain nominal size. Thus, we instead take the following $|C| +1$ estimating equations.   
        \begin{equation}
            \begin{aligned}
                m_1(X) &= \hat \epsilon_{d, -c} Y_{c}^2 \\
                m_k(X) &= \hat \epsilon_{d, -c} Y_{c'}^2 \text{ for } c' \in C \text{ and } 1 < k\leq |C|
                \\ 
                m_{|C| + 1}(X) &= \hat \epsilon_{d, -c}^2 Y_{c}  \\
            \end{aligned}
        \end{equation}
        Letting $A \in \mathbb{R}^{|C| + 1 \times |C| + a}$ Jacobian of the estimating equations with respect the coefficients. Without loss of generality, we will assume that the potential parent of interest $c$ is first in $C$. Then, we have
        \[g(X) = m_1(X) -  A_{1, -1} A_{-1, -1}^{-1}m_{2:(|C| + 1)}(X).\]
        Using empirical likelihood to test if $g(X) = 0$ for each $(c, d) \in V \times V$ such that $c$ is in a topological layer preceding $d$ yields a p-value $\pi_{c,d}$. To estimate the final edge set, we add edge $c \rightarrow d$ if $\pi_{c,d}$ after being adjusted for multiple testing exceeds a threshold. 
    \end{itemize}
\end{itemize}

\subsubsection{{Limitations and Future Directions}}
{ It is valuable to consider the behavior of the algorithm when its underlying assumptions are violated. This issue is not unique to our method; for instance, DAG-based algorithms like DirectLiNGAM \citep{shimizu2011directlingam} also face challenges when their acyclicity assumption is violated by the presence of cycles or confounding.}

{In our case, a key limitation arises when the graph structure contains overlapping cycles. In such cases, our method can correctly infer the causal structure up to the point where it encounters a root component that is a strongly connected component but is not a simple cycle (i.e., it contains two or more intersecting cycles). At this juncture, two strategies are possible. The first is a {conservative approach} where the algorithm reports that no further simple cycles can be identified and terminates, providing no additional details about the subgraph induced by the remaining nodes. The second is a more {aggressive strategy} that attempts to find a “best-fitting simple cycle model” by, for example, identifying a cycle that minimizes the violation of the required polynomial equalities.}

{Our current implementation follows the conservative strategy. When no simple root cycle is found, our algebraic conditions fail the statistical tests, and the algorithm correctly indicates the absence of a simple cycle before halting. While the more aggressive strategy is common in implementations of related algorithms like DirectLiNGAM, we have not yet implemented it for our method. Developing and evaluating such a strategy presents an interesting and potentially fruitful direction for future work.
}

\section{Additional Numerical Experiments}
In this section we first present additional numerical experiments. Specifically, we generate data in the same way as described in Section~\ref{sec:numerical_experiments}. However, we now create graphs with $p/5$ disjoint cycles where each cycle is of size $5$. In general, the procedure performs better when the graphs have $3$-cycles when compared to the setting with $5$-cycles. Similar to the setting where the graphs consisted of $3$-cycles, the procedure performs better when the errors are a mixture of normals when compared to the gamma errors.

\begin{figure}[h]
\centering
\includegraphics[scale = .4]{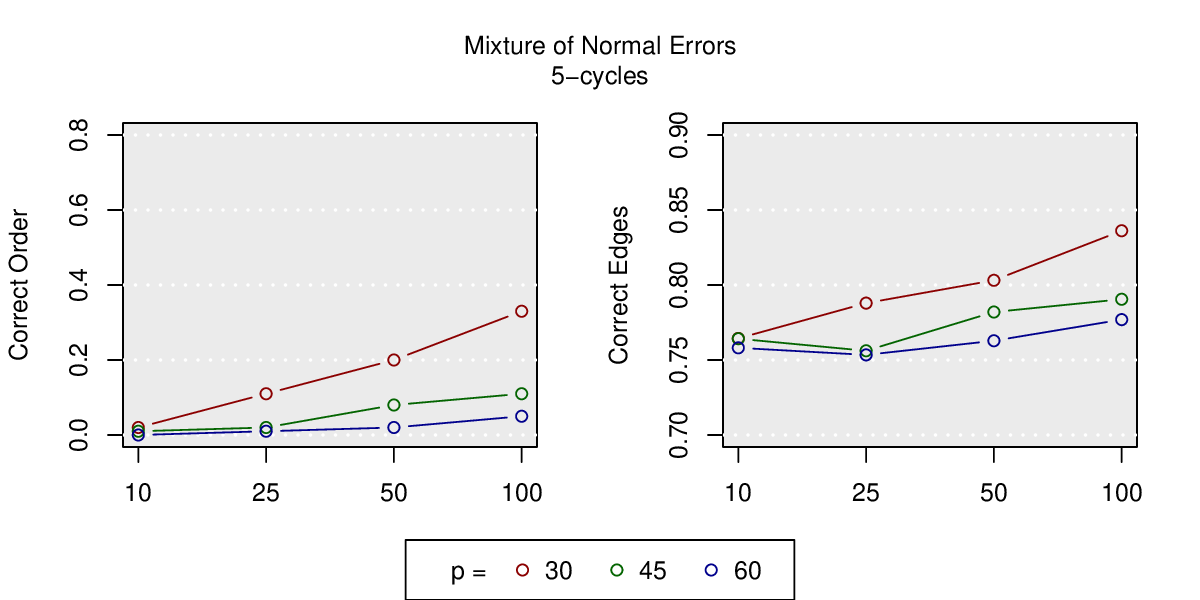}
\includegraphics[scale = .4]{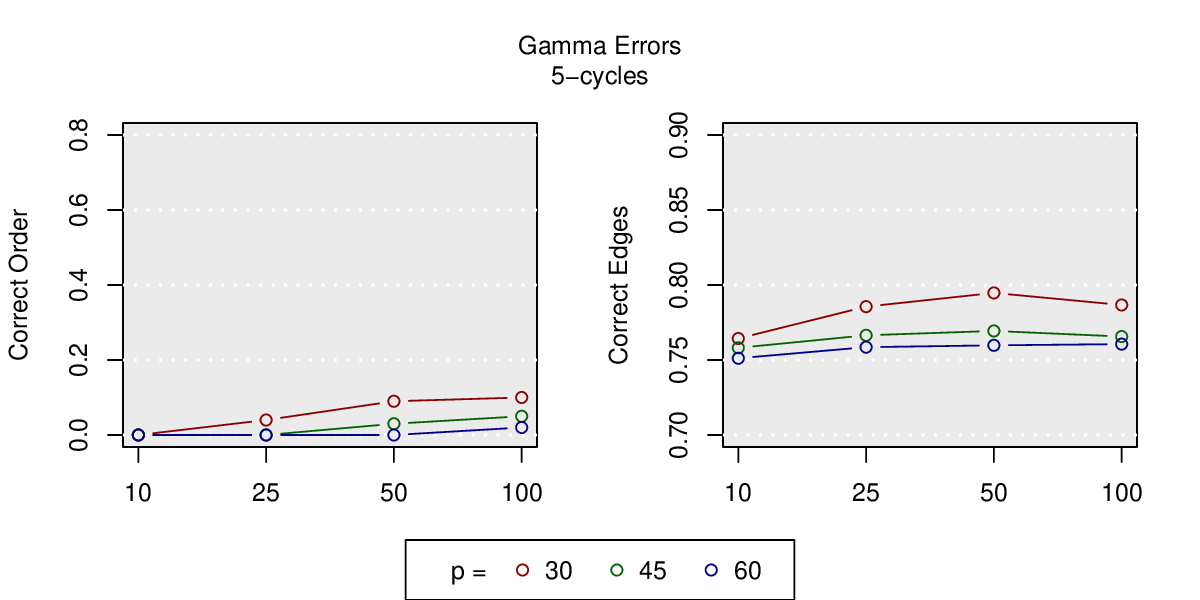}

\caption{\label{fig:5cycles}Results when each graph consists of $5$-cycles.}
\end{figure}

In Table~\ref{tab:ica}, we give details for the ICA based procedure of~\citet{DBLP:conf/uai/LacerdaSRH08}. Specifically, we generate random graphs with $3$-cycles and data as described in Section~\ref{sec:numerical_experiments}. We use the implementation from the rpy-tetrad~\citep{ramsey2023py} and run the experiments on a cluster with 24GB of memory allocated to each job. Table~\ref{tab:ica} shows the number of replicates (out of 25 for each setting) where the ICA based procedure returns an estimate within 24 hours. In general, we see that the computational performance improves as $n$ grows larger and that performance is better for the mixture of normals when compared to the gamma errors. This is because when the demixing matrix is estimated more precisely fewer permutations must be considered in the post-processing procedures. We note that in each instance where an estimated graph is returned, the true graph is recovered exactly.

\begin{table}[ht]
\centering
\caption{\label{tab:ica}The number of replicates where the ICA approach returns an estimated graph within 24 hours.} 
\begin{tabular}{|c|c|c|c|}
  \hline
 & & \multicolumn{2}{|c|}{Completed replicates out of 25} \\ \hline
p & n (in 000s) & Mixture of normals & Gamma  \\ 
  \hline
 \multirow{4}{*}{18} & 10 &  16 & 1\\ 
    & 25 &  21 & 4 \\ 
    & 500 &  24 & 9\\ 
    & 100 &  24 & 14\\ \hline
   \multirow{4}{*}{21} & 10 &   4 & 0\\ 
    & 25 &   9 & 0 \\ 
    & 50 &  16 & 1\\ 
    & 100 &  22 & 10 \\ \hline
\multirow{4}{*}{24} & 10 &   4 & 0\\ 
    & 25 &   9 & 0 \\ 
    & 50 &  1 & 0\\ 
    & 100 &  1 & 0\\ \hline
   \multirow{4}{*}{27} & 10 &   0 & 0 \\ 
    & 25 &   0 & 0\\ 
    & 50 &  0 & 0\\ 
    & 100 &  0 & 0\\
   \hline
\end{tabular}
\end{table}

\end{document}